\providecommand{\U}[1]{\protect\rule{.1in}{.1in}}
\newtheorem{thm}{Theorem}[section]
\newtheorem{definition}{Definition}[section]
\newtheorem{lem}{Lemma}[section]
\newtheorem{proposition}{Proposition}[section]
\newtheorem{rem}{Remark}[section]
\newenvironment{proof}[1][Proof]{\noindent\textbf{#1.} }{\ \rule{0.5em}{0.5em}}
\numberwithin{equation}{section}
\definecolor{linkcolor}{rgb}{0,0,0.7}
\definecolor{urlcolor}{rgb}{1,0,0}
\begin{document}

\title{On optimality of barrier dividend control under endogenous regime switching with application to Chapter 11 bankruptcy
}

\author{Wenyuan Wang\thanks{School of Mathematics and Statistics, Fujian Normal University, Fuzhou, 350007, P.R. China; and School of Mathematical Sciences, Xiamen University, Xiamen, 361005, P.R. China. Email: wwywang@xmu.edu.cn}
\and Xiang Yu\thanks{Department of Applied Mathematics, The Hong Kong Polytechnic University, Hung Hom,
Kowloon, Hong Kong. Email: xiang.yu@polyu.edu.hk}
\and Xiaowen Zhou\thanks{Department of Mathematics and Statistics, Concordia University, Canada. Email: xiaowen.zhou@concordia.ca}}

\date{\vspace{-4ex}}

\maketitle

\begin{abstract}
Motivated by recent developments in risk management based on the U.S. bankruptcy code, we revisit the De Finetti's optimal dividend problem by incorporating the reorganization process and regulator's intervention documented in Chapter 11 bankruptcy. The resulting surplus process, bearing financial stress towards the more subtle concept of bankruptcy, corresponds to a non-standard spectrally negative L\'{e}vy process with endogenous regime switching. Some explicit expressions of the expected present values under a barrier strategy, new to the literature, are established in terms of scale functions. With the help of these expressions, when the tail of the L\'evy measure is log-convex, the optimal dividend control is shown to be of the barrier type and the associated optimal barrier can be identified using scale functions of spectrally negative L\'evy processes. Some financial implications are also discussed in an illustrative example.
\end{abstract}

\noindent
\textbf{Keywords}: Spectrally negative L\'{e}vy process, Chapter 11 bankruptcy, De Finetti's optimal dividend, barrier strategy, Parisian ruin with exponential delay, scale functions\\
\ \\
\textbf{AMS}: 60G51, 91B05, 91G05, 93E20

\section{Introduction}
De Finetti's dividend optimization has always been an important topic in corporate finance and insurance, which effectively signals the financial health and stability of companies. This type of risk management by maximizing the expected present value (PV for short) of dividend payments has stimulated fruitful research in stochastic singular control and impulse control under various risk models. Some pioneering work can be found in \cite{DeFi1957}, \cite{Gerber1969}, \cite{SLG84}, \cite{JSh1995} among others. A spectrally negative L\'{e}vy process, referring to a L\'{e}vy process with purely downward jumps, has been popular in insurance applications with its capability of describing the surplus process that diffuses by collecting the premiums and jumps downside by claim payments. Some early works based on spectrally negative L\'{e}vy processes can be found in \cite{AvPP2007}, \cite{KypPal2007}, \cite{ReZ07}, \cite{Loe2008} and \cite{Loe2009}. In particular, \cite{AvPP2007}, \cite{Loe2008} and \cite{Loe2009} prove that the optimal dividend control is of the barrier type by using the fluctuation theory of spectrally negative L\'evy processes and the dynamic programming approach. For some comprehensive surveys on developments in optimal dividends and related methodology, we refer to \cite{ALT2009}, \cite{Av2009} and references therein. Recently, more new variations on optimal dividend problems have emerged by considering different risk models and control constraints, such as \cite{Bay13}, \cite{Bay14}, \cite{Cheung2017}, \cite{PYY18}, \cite{Re19}, \cite{AvLW2020}, \cite{Cheng2020}, \cite{DeA2020}, \cite{NPY2020}, \cite{JLYY2021}, \cite{AvLW2021}, \cite{Noba2021}, just to name a few.

On the other hand, the consideration of the liquidation process (Chapter 7 bankruptcy) and the reorganization process (Chapter 11 bankruptcy) in risk assessment and management based on the U.S. bankruptcy code has attracted more attention in past decades. As an important early contribution along this direction, \cite{BroCS2007} study Chapters 7 and 11 bankruptcy by proposing a three-barrier model of a firm whose capital structure contains risky debt, and address the problem of optimal debt and equity values. Within a similar framework of  \cite{BroCS2007}, \cite{LiTWZ2014} establish an explicit formula for the probability of liquidation when the surplus process follows a general time-homogeneous diffusion process. To fully capture some features in Chapters 7 and 11 bankruptcy, \cite{LiLTZ2020} recently introduce a piecewise time-homogeneous diffusion surplus process embedded with three barriers $a, b$ and $c$ ($a<b<c$) to model the surplus of the insurance company. The liquidation barrier a: once the surplus process down-crosses this barrier, the company ceases all operations and is liquidated due to its inability to cover its debts. The reorganization barrier b: once the surplus process down-crosses this barrier, 
an exponential clock starts that gives an amount of time until one up-crosses the barrier $c$. Moreover, the insurer's state becomes insolvent whose businesses will be reorganized subject to the regulator's intervention. The safety barrier c: an insurer whose surplus stays at or above this barrier is financially healthy. To reflect the influence by reoganization and the regulator's intervention, the dynamics of the surplus process switches between two different time-homogeneous diffusion processes. Other notable studies in risk management featuring Chapters 7 and 11 bankruptcy can be found in \cite{Paseka03}, \cite{BroKaya2007}, \cite{DaiJL2013}, \cite{CorDE2017}, etc.

As a first attempt to study the De Finetti's optimal dividend problem to integrate Chapter 11 bankruptcy, we assume that $a=-\infty$ and leave the more complicated model encoding both Chapters 7 and 11 bankruptcy (i.e., $a>-\infty$) as future research. We model the risk surplus process by a spectrally negative L\'{e}vy process with endogenous regime switching activated by a reorganization barrier $b$ and a solvency barrier $c$ ($b<c$). Thanks to the spatial homogeneity of of L\'{e}vy processes, it is assumed without loss of generality that $b=0$, which simplifies the presentation of some main results. The resulting risk surplus process is mathematically close to a spectrally negative L\'{e}vy process with regime switching if we classify the solvency and insolvency states as regime states. Nevertheless, our new risk surplus process differs substantially from the Markov additive models in the aspect that our endogenous regime switching is triggered when the controlled surplus process crosses some prescribed barriers in certain ways; see our detailed construction in Definition \ref{def001}. 
Meanwhile, 
our model allows jumps to capture some large shocks (for example, lump sums of claims) in the cash flow of the insurer's surplus level, which gives rise to some new mathematical challenges. Another key feature in Chapter 11 is its bankruptcy time that is defined as the first instance when the amount of time that the risk process continuously stays in the insolvency state exceeds an exponential grace time. This type of bankruptcy time is also called Parisian ruin with exponential delay motivated by the Parisian option, see \cite{Ches97}. For optimal dividend control with Parisian ruin and spectrally negative L\'{e}vy processes, we refer to \cite{Re19}. As explained in \cite{PPSY20}, the Parisian ruin with exponential delay is closely related to Poisson observation. Some recent results and applications on L\'{e}vy processes with Poisson observations can be found in \cite{ALIZ2016}, \cite{ALI17}, \cite{PPSY20} and references therein.

With both endogenous regime switching and the Parisian ruin time, it is an open problem that whether the optimal dividend control is still of the barrier type. We conjecture that the optimality of a barrier strategy still holds and perform the ``guess-and-verify'' procedure. We first introduce an indicator state process to capture the switching between solvency and insolvency states, leading to a pair of coupled PVs of dividend payments. By employing the fluctuation theory of spectrally negative L\'{e}vy processes and some perturbation arguments, we obtain some novel explicit formulas of expected PVs under a barrier strategy in terms of scale functions. Assuming that the tail of the L\'{e}vy measure is log-convex, we contribute the rigorous verification of the optimal barrier strategy using the HJB variational inequality and some delicate computations on generators and slope conditions with the aid of the expected PVs of dividend payments. Unlike the implicit fixed point barrier in \cite{NPY2020}, the optimal barrier in our model, depending on the solvency barrier $c$, can be characterized analytically; see its definition in \eqref{def.d*} and one illustrative example in Section \ref{sec:example}.

The rest of the paper is organized as follows. Section \ref{sec:singul} introduces the De Finetti's optimal singular dividend control under Chapter 11 bankruptcy, in which some explicit formulas are derived for the expected PVs under a barrier dividend strategy. Section \ref{sec:ver} constructs the candidate optimal barrier using scale functions of spectrally negative L\'{e}vy processes and verifies its optimality with the assistance of the HJB variational inequality and previous formulas of expected PVs of the barrier dividend. An illustrative example is presented in Section \ref{sec:example} and some financial implications are discussed. 

\section{De Finetti's Optimal Dividend under Chapter 11 Bankruptcy}\label{sec:singul}


\subsection{Problem formulation}
Write $(\Omega, \mathcal{F}, \mathbf{F}, \mathbb{P})$ for an underlying filtered probability space, where $\mathbf{F}=(\mathcal{F}_t)_{t\geq 0}$ stands for the natural filtration generated by two spectrally negative L\'evy processes $X(t)$ and $\widetilde{X}(t)$ and satisfies the usual conditions of right-continuity and completeness (see Exercise 8.10 in Chapter 8 of \cite{Kyp2014}).
Let us consider the {\it singular dividend control} $D=(D(t))_{t\geq 0}$, which is a non-decreasing and left-continuous $\mathbf{F}$-adapted process representing the cumulative dividends paid out by the company up to the Chapter 11 bankruptcy time. 
To account for changes in risk processes under regulator's intervention documented in Chapter 11, we introduce an auxiliary state process $I(t)$, depending on the control, as an indicator process of solvency and insolvency states. We adopt $X(t)$ and $\widetilde{X}(t)$ to model the underlying risk processes without and with regulator's interventions. To construct the surplus process $U(t)$, if the insurer is in the solvency state at time $t\geq0$ (i.e., $I(t)=0$), the surplus process
$U(t)$ is governed by a spectrally negative L\'evy process $X(t)$ deducted by total dividends that are only paid when the surplus level is at or above the safty barrier $c>0$. At the instant when the surplus process $U(t)$ down-crosses the level $b=0$, the state of the insurer switches to insolvency and the underlying risk process $X(t)$ switches to another spectrally negative L\'evy process $\widetilde{X}(t)$. On the other hand, if the insurer is in the insolvent state  at time $t\geq0$ (i.e., $I(t)=1$),
the surplus process follows the dynamics of the spectrally negative L\'evy process $\widetilde{X}(t)$. At the instant when the surplus process $U(t)$ up-crosses the safety barrier $c>0$, the state of the insurer switches to solvency state and the underlying risk process switches back to the process $X(t)$. It is assumed for mathematical tractability that the dividend is only paid when the surplus level is at least at the safety barrier $c>0$, which is also consistent with the real life situation that the regulator immediately notifies the insurer when the surplus level falls below the safety barrier.

We can summarize the previous piecewise construction of the surplus process $U$ under a singular dividend strategy in the next definition.

\begin{definition}\label{def001} \textbf{\emph{(The surplus process $U$).}}
Let $D=(D(t))_{t\geq 0}$ be a non-decreasing  left-continuous $\mathbf{F}$-adapted process, with its non-decreasing and continuous part $C_{D}(t)$ and $\triangle D(t):=D(t+)-D(t)$ for $ t\geq 0$. Recalling  $c>0$ and  starting at  time $\mathcal{T}_0:=0$, let $U(0):=X(0)\in\left(0,\infty\right)$ if $I(0)=0$ and $U(0):=\widetilde{X}(0)\in\left(-\infty,c\right)$ if $I(0)=1$. The process $(U, I)$ can be  constructed as follows:\vspace{0.2in}

\begin{mdframed}
{\small
\begin{itemize}
\item[\emph{(\textbf{a})}] Suppose that the process $\left(U,I\right)$ has been defined on $\left[0,\mathcal{T}_{n}\right]$ for some $n\geq 0$ with $\mathcal{T}_{n}<\infty$.
\begin{itemize}
    \item[$\bullet$]
If $I\left(\mathcal{T}_{n}\right)=0$, we define $U_{n+1}=(U_{n+1}(t))_{t\geq 0}$ with $U_{n+1}(0)=U\left(\mathcal{T}_{n}\right)$ according to the SDE
\begin{align}
\label{def4.1}
\mathrm{d}U_{n+1}(t)=
\mathrm{d}X(\mathcal{T}_{n}+t)
-\mathbf{1}_{\{U_{n+1}(t)-c\geq \triangle D(t)\}}\,\mathrm{d}D(t),\quad t\geq 0,
\end{align}
and update time $\mathcal{T}_{n+1}:=\mathcal{T}_{n}+\inf\{t\geq 0;\,U_{n+1}(t)<0\}$. We then define 
the process $\left(U,I\right)$ on $\left(\mathcal{T}_{n},\mathcal{T}_{n+1}\right]$ by
\begin{align}
\begin{cases}
\left(U\left(\mathcal{T}_{n}+t\right),I\left(\mathcal{T}_{n}+t\right)\right):=\left(U_{n+1}(t),0\right),\quad  t\in \left(0, \mathcal{T}_{n+1}-\mathcal{T}_{n}\right),
\vspace{0.2cm}\\
\left(U\left(\mathcal{T}_{n+1}\right),I\left(\mathcal{T}_{n+1}\right)\right):=\left(U_{n+1}\left(\mathcal{T}_{n+1}-\mathcal{T}_{n}\right),1\right).\end{cases}
\nonumber
\end{align}

\item[$\bullet$] Otherwise, if  $I\left(\mathcal{T}_{n}\right)=1$, we define $\widetilde{U}_{n+1}$  by
\begin{eqnarray}
\label{def4.2}
\widetilde{U}_{n+1}(t):=U\left(\mathcal{T}_{n}\right)+\widetilde{X}\left(\mathcal{T}_{n}+t\right)-\widetilde{X}\left(\mathcal{T}_{n}\right),\quad t\geq 0,
\end{eqnarray}
and update time $\mathcal{T}_{n+1}:=\mathcal{T}_{n}+\inf\{t\geq 0; \widetilde{U}(t)\geq c\}.$
We then define the process $\left(U,I\right)$ on $\left(\mathcal{T}_{n},\mathcal{T}_{n+1}\right]$ by
\begin{align}
\begin{cases}
\left(U\left(\mathcal{T}_{n}+t\right),I\left(\mathcal{T}_{n}+t\right)\right):=(\widetilde{U}_{n+1}\left(t\right),1),\quad t\in \left(0, \mathcal{T}_{n+1}-\mathcal{T}_{n}\right),
\vspace{0.2cm}\\
\left(U\left(\mathcal{T}_{n+1}\right),I\left(\mathcal{T}_{n+1}\right)\right):=(\widetilde{U}_{n+1}\left(\mathcal{T}_{n+1}-\mathcal{T}_{n}\right),0)=(c,0).\end{cases}
\nonumber
\end{align}
\end{itemize}
\item[\emph{(\textbf{b})}]
Suppose that the process $\left(U,I\right)$ has been defined on $\left[0,\mathcal{T}_{n}\right]$ for some $n\geq 0$ with $\mathcal{T}_{n}=\infty$, we update $\mathcal{T}_{n+1}=\infty$.
\end{itemize}
}
\end{mdframed}
\end{definition}

\ \hspace{0.3in}\includegraphics[height=2.5in]{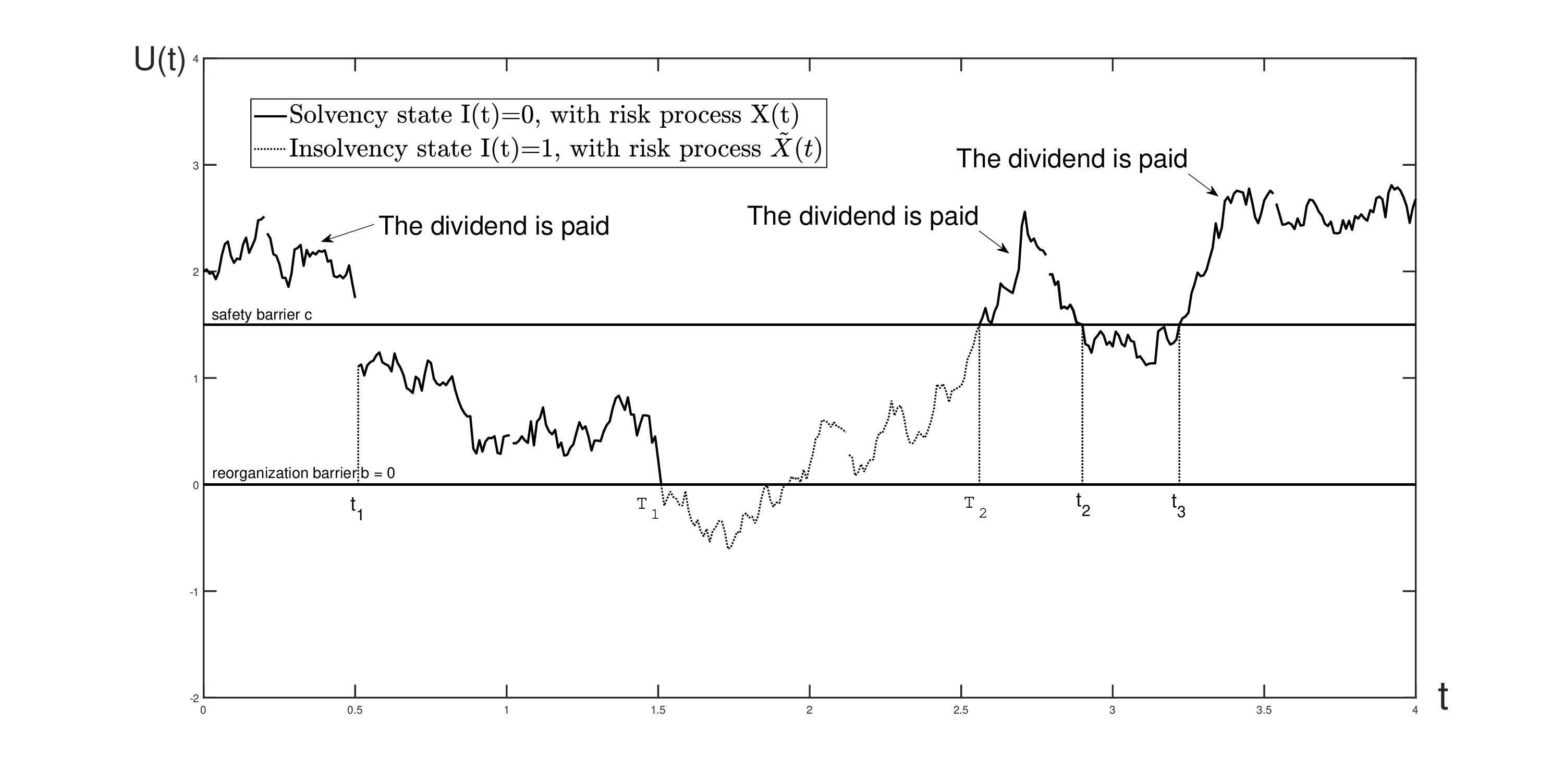}\\
\vbox{Figure 1: {\footnotesize A sample path of the surplus process $U(t)$ for $t\in[0,4]$: On the interval $[0,\mathcal{T}_1]$, the surplus level is in solvency state with $I(t)=0$ and $U(t)$ is generated by the risk process $X(t)$ and the dividend is paid during $[0,t_1)$; on the interval $(\mathcal{T}_1, \mathcal{T}_2]$, the surplus level is in insolvency state with $I(t)=1$ and $U(t)$ is generated by the risk process $\widetilde{X}(t)$; on the interval $(\mathcal{T}_2, 4]$, the surplus process switches back to solvency state again and the dividend is paid during $(\mathcal{T}_2, t_2]$ and $[t_3, 4]$.}}\vspace{0.2in}

Note that the one-dimensional process $U$ is not a Markov process in general, but the two-dimensional process $(U,I)$ is Markovian. Write $\mathbb{P}_{x,i}$ and $\mathbb{E}_{x,i}$ for the law of $(U,I)$ such that $U(0)=x$ and $I(0)=i$.

\begin{rem}
\label{rem.2.1.vamo}
    One can check that $\mathcal{T}_{n}\rightarrow\infty$ almost surely as $n\rightarrow\infty$. In fact, by the proof of Theorem 3.1 in \cite{ALIZ2016}, we recall the Laplace transform identity associated to one-sided exit problem that
\begin{eqnarray}
\label{two.side.l.}
\mathbb{E}_{x}\big[\mathrm{e}^{-q\widetilde{\tau}_{z}^{+}}\big]
=\exp(\widetilde{\Phi}_{q}(x-z)), \quad  x\leq z,
\end{eqnarray}
where $\widetilde{\tau}_{z}^{+}$ and $\widetilde{\Phi}_{q}$ are defined in the same manner as $\tau_{z}^{+}$ and $\Phi_{q}$ (of which the definitions are given in Section \ref{sec:pre}) but with the underlying process $X$ replaced as $\widetilde{X}$. 
Using \eqref{two.side.l.} one can get
    \begin{align}
    \label{u.b.L.}
        \mathbb{E}_{c,0}\left[\mathrm{e}^{-q \mathcal{T}_{2}}\mathbf{1}_{\{\mathcal{T}_{2}<\infty\}}\right]
        &= \mathbb{E}_{c,0}\left[\mathrm{e}^{-q \mathcal{T}_{1}}\mathbf{1}_{\{\mathcal{T}_{1}<\infty\}}
\mathbb{E}_{U(\mathcal{T}_{1}),1}\left[\mathrm{e}^{-q \mathcal{T}_{1}}\mathbf{1}_{\{\mathcal{T}_{1}<\infty\}}\right]\right]
\nonumber\\
 &= \mathbb{E}_{c,0}\left[\mathrm{e}^{-q \mathcal{T}_{1}}\mathbf{1}_{\{\mathcal{T}_{1}<\infty\}}
\mathbb{E}_{U(\mathcal{T}_{1})}\left[\mathrm{e}^{-q\widetilde{\tau}_{c}^{+}}\right]\right]
\nonumber\\
 &= \mathbb{E}_{c,0}\left[\mathrm{e}^{-q \mathcal{T}_{1}}\mathbf{1}_{\{\mathcal{T}_{1}<\infty\}}
\mathrm{e}^{\widetilde{\Phi}_{q}(U(\mathcal{T}_{1})-c)}\right]
\nonumber\\
 &\leq 
\mathrm{e}^{-\widetilde{\Phi}_{q}c}.
    \end{align}
where $U(\mathcal{T}_{1})\leq 0$ is used in the above inequality.
Then, by \eqref{u.b.L.}, one can obtain that
    \small{\begin{align}
        \mathbb{E}_{x,0}\left[\mathrm{e}^{-q \mathcal{T}_{2n}}\right]
        &=\mathbb{E}_{x,0}\left[\mathrm{e}^{-q \mathcal{T}_{2(n-1)}}
        \mathbf{1}_{\{\mathcal{T}_{2(n-1)}<\infty\}}\right.
        \nonumber\\&\quad 
        \left.
\times\mathbb{E}_{x,0}\left[\left.\mathrm{e}^{-q (\mathcal{T}_{2n}-\mathcal{T}_{2(n-1)})}
        \mathbf{1}_{\{\mathcal{T}_{2n}-\mathcal{T}_{2(n-1)}<\infty\}}
        \right|\mathcal{F}_{\mathcal{T}_{2(n-1)}}\right]\right]
        \nonumber\\
        &=
        \mathbb{E}_{x,0}\left[\mathrm{e}^{-q \mathcal{T}_{2(n-1)}}\mathbf{1}_{\{\mathcal{T}_{2(n-1)}<\infty\}}\right]\mathbb{E}_{c,0}\left[\mathrm{e}^{-q \mathcal{T}_{2}}\mathbf{1}_{\{\mathcal{T}_{2}<\infty\}}\right]
        \nonumber\\
        &=
        \mathbb{E}_{x,0}\left[\text{e}^{-q \mathcal{T}_{2}}\mathbf{1}_{\{\mathcal{T}_{2}<\infty\}}\right]\left[\mathbb{E}_{c,0}\left[\mathrm{e}^{-q \mathcal{T}_{2}}\mathbf{1}_{\{\mathcal{T}_{2}<\infty\}}\right]\right]^{n-1}
        \nonumber\\
        &\leq\mathrm{e}^{-(n-1)\widetilde{\Phi}_{q}c}\rightarrow 0, \quad \textit{as} \quad n\rightarrow\infty.
    \end{align}}Therefore, it holds that $\lim\limits_{n\rightarrow\infty} \mathcal{T}_{n} = \infty$ $\mathbb{P}_{x,0}$-a.s. because $\mathcal{T}_{n}$ is increasing in $n$. Similarly, one can also deduce that $\lim\limits_{n\rightarrow\infty} \mathcal{T}_{n} = \infty$ $\mathbb{P}_{x,1}$-a.s.
\end{rem}

By Remark \ref{rem.2.1.vamo}, one concludes that the process $U$ in Definition \ref{def001} is well-defined given that the stochastic differential equation \eqref{def4.1}  admits a unique strong solution for each $n\geq 0$.

\begin{definition}
A non-decreasing and left-continuous  $\mathbf{F}$-adapted process $D$ is said to be an admissible dividend strategy if the stochastic differential equation \eqref{def4.1}  admits a unique strong solution for each $n\geq 0$. Let $\mathcal{D}$ denote the set of all admissible dividend strategies.
\end{definition}

Similar to the Parisian ruin with exponential delay (see \cite{Re19}), we now give the definition of Chapter 11 bankruptcy time. Put $\kappa:=\inf\{k\geq 1: \mathcal{T}_k+e_{\lambda}^{k}<\mathcal{T}_{k+1},\ I(\mathcal{T}_k)=1\}$, with the convention that $\text{inf}\ \emptyset=\infty$. The Chapter 11 bankruptcy time is defined by
\begin{align}\label{11bankrcy}
T_D:=\left(\mathcal{T}_{\kappa}+e_{\lambda}^{\kappa}\right)\cdot\mathbf{1}_{\{\kappa<\infty\}}
+\left(+\infty\right)\cdot\mathbf{1}_{\{\kappa=\infty\}},
\end{align}
where $\{e_{\lambda}^{k}\}_{k\geq 1}$, defined on $(\Omega, \mathcal{F}, \mathbb{P})$, is a sequence of independent and  exponentially distributed random variables with parameter $\lambda$, representing the sequence of grace time periods granted by the regulator. It is assumed that $\{e_{\lambda}^{k}\}_{k\geq 1}$ is independent of $X$ and $\widetilde{X}$.

Recall that $\mathbb{P}_{x,0}$ is the probability law of $(U,I)$ with initial value $(U(0),I(0))=(x,0)$ for $x\in\left(0,\infty\right)$, $\mathbb{P}_{x,1}$ is the probability law of $(U,I)$ with initial value $(U(0),I(0))=(x,1)$ for $x\in\left(-\infty,c\right)$, and $\mathbb{E}_{x,0}$ (resp, $\mathbb{E}_{x,1}$) is the expectation operator under $\mathbb{P}_{x,0}$ (resp, $\mathbb{P}_{x,1}$). Given an admissible dividend strategy $D\in\mathcal{D}$, we consider two expected PVs of dividend payments defined by
{\small\begin{align}\label{expNPV-1}
&V_{D}(x):=\mathbb{E}_{x,0}\bigg[\int_{0}^{T_{D}}\mathrm{e}^{-qt}\mathbf{1}_{\{U(t)\geq c\}}\,\mathrm{d}C_{D}(t)
+\mathbf{1}_{\{U(t+)\geq c\}}\,\mathrm{d}\Big(\sum_{0\leq s\leq t}\triangle D(t)\Big)\bigg]
,\quad x>0,
\\
&\label{expNPV-2}
\widetilde{V}_{D}(x):=\mathbb{E}_{x,1}\bigg[\int_{0}^{T_{D}}\mathrm{e}^{-qt}\mathbf{1}_{\{U(t+)\geq c\}}\,\mathrm{d}D(t)\bigg],\quad x<c.
\end{align}}
%

In this paper, we want to find the optimal dividend strategy $D^{*}$ to attain the maximum of the value function that
\begin{align}\label{op.str.}
&V_{D^{*}}(x)=\sup_{D\in\mathcal{D}}V_{D}(x)\,\text{ for }\,\,x>0\quad
\text{ and } \quad \widetilde{V}_{D^{*}}(x)=\sup_{D\in\mathcal{D}}\widetilde{V}_{D}(x) \,\text{ for }\,\,x<c.
\end{align}

\subsection{Preliminaries on spectrally negative L\'evy processes}\label{sec:pre}
We conjecture that the optimal dividend control in problem \eqref{op.str.} is still of the barrier type and we aim to first express the expected PVs \eqref{expNPV-1} and \eqref{expNPV-2} under a barrier dividend strategy using scale functions of spectrally negative L\'{e}vy processes. To this end, we present here a brief review of some preliminary results on fluctuation identities for spectrally negative L\'{e}vy processes (see more details in \cite{Kyp2014}).
Let $X=(X(t))_{t\geq0}$ be a spectrally negative L\'{e}vy process defined on the filtered probability space $(\Omega,\mathbf{F},\mathbb{P})$ with the natural filtration $\mathbf{F}:=(\mathcal{F}_{t})_{t\geq0}$. 
To exclude trivial cases, it is assumed that $X$ has no monotone paths.
We denote by $\mathbb{P}_{x}$ the  probability law given $X(0)=x$, and by $\mathbb{E}_{x}$ the associated expectation. For ease of notation, we write $\mathbb{P}$ and $\mathbb{E}$ in place of $\mathbb{P}_{0}$ and $\mathbb{E}_{0}$ respectively. The Laplace transform of a spectrally negative L\'{e}vy process $X$ is defined by
$\mathbb{E}\big(\mathrm{e}^{\theta X(t)}\big)=\mathrm{e}^{t\psi(\theta)},$ for all $\theta\geq 0$, where
$$\psi(\theta)=\gamma\theta+\frac{1}{2}\sigma^2\theta^2+\int_{0}^{\infty}
(\mathrm{e}^{-\theta z}-1+\theta z\mathbf{1}_{(0,1]}(z))\upsilon(\mathrm{d}z),$$
for $\gamma\in(-\infty,\infty)$ and $\sigma\geq 0$, and the $\sigma$-finite L\'{e}vy measure $\upsilon$ of $X$ on $(0,\infty)$ satisfies that
$\int_{0}^{\infty}(1\wedge z^2)\upsilon(\mathrm{d}z)<\infty.$ 
As the Laplace exponent $\psi$ is strictly convex and $\lim_{\theta\rightarrow \infty}\psi(\theta)=\infty$, there exists a right inverse of $\psi$ defined by $\Phi_{q}:=\sup\{\theta\geq 0:\psi(\theta)=q\}$.

We next introduce some scale functions of $X$. For $q\geq0$, the scale function $W_{q}:\,[0,\infty)\rightarrow[0,\infty)$ is defined as the unique strictly increasing and continuous function on $[0,\infty)$ with Laplace transform $\int_{0}^{\infty}\mathrm{e}^{-\theta x}W_{q}(x)\mathrm{d}x=\frac{1}{\psi(\theta)-q}$, $\theta>\Phi_{q}$. For technical convenience, we extend the domain of $W_{q}(x)$ to the whole real line by setting $W_{q}(x)=0$ for $x<0$. Moreover, the scale functions $Z_{q}(x,\theta)$ and $ Z_{q}(x)$ are defined by
\begin{align}\label{Zqtheta}
Z_{q}(x,\theta)
:=\mathrm{e}^{\theta x}\Big(1-\left(\psi(\theta)-q\right)\int_{0}^{x}\mathrm{e}^{-\theta w}W_{q}(w)\mathrm{d}w\Big)
,\quad x\geq0,\,q\geq0,\,\theta\geq 0,
\end{align}
with $Z_{q}(x,\theta)=\mathrm{e}^{\theta x}$ on $(-\infty,0)$, and
$Z_{q}(x):=Z_{q}(x,0)$
with $Z_{q}(x)\equiv1$ on $(-\infty,0)$. We shall write $W:=W_{0}$ and $Z:=Z_{0}$ for simplicity.
It is well known that
\begin{align}\label{s.f.l.}
\lim_{x\rightarrow\infty}W_{q}^{\prime}(x)/W_{q}(x)=\Phi_{q} ,\quad \lim_{y\rightarrow\infty}W_{q}(x+y)/W_{q}(y)=\mathrm{e}^{\Phi_{q}x}.
\end{align}
Note that the scale function $W_{q}$ has right-hand and left-hand derivatives on $(0,\infty)$. When $X$ has bounded variation and the L\'{e}vy measure has no atoms or $X$ has unbounded variation, $W_{q}$ is continuously differentiable on $(0, \infty)$. By Theorems 3.10 and 3.12 in \cite{KKR12}, $W_{q}$ is twice continuously differentiable on $(0, \infty)$ when $X$ has a nontrivial Gaussian component or is $n+1$ times continuously differentiable on $(0, \infty)$ when $X$ has paths of bounded variation and the tail of the L\'evy measure (i.e., $\upsilon(x,\infty)$) is $n$ times continuously differentiable on $(0, \infty)$ and has a density that is dominated in order by $|x|^{-1-\alpha}$ in the neighbourhood of $0$ for some $\alpha>0$.
By Theorem 2 of \cite{Loe2009a}, $W_{q}$ is in $C^{\infty}(0, \infty)$ when the L\'evy measure has a completely monotone density.
We refer to \cite{KKR12}, \cite{Kyp2014}, \cite{ChanKS2011} and \cite{Loe2008} for more details on the regularity of scale functions.

For any $x\in\mathbb{R}$ and $\vartheta\geq0$, there exists a probability measure $\mathrm{P}_{x}^{\vartheta}$ obtained from the well-known exponential change of measure for a
spectrally negative L\'evy process such that
$
\frac{\mathrm{P}_{x}^{\vartheta}}{\mathrm{P}_{x}}\big|_{\mathcal{F}_{t}}=\mathrm{e}^{\vartheta\left(X(t)-x\right)-\psi(\vartheta)t}.
$
Under $\mathrm{P}_{x}^{\vartheta}$,  $X$ remains a spectrally negative L\'evy process with its Laplace exponent $\psi_{\vartheta}$ and the scale function $W^{\vartheta}_{q}$ that: for $\vartheta\geq 0$ and $q+\psi(\vartheta)\geq 0$,
\begin{equation}\label{c.s.newmeas.}
\psi_{\vartheta}(\theta)=\psi(\vartheta+\theta)-\psi(\vartheta)\,\,\,\text{ and }\,\,\,
W^{\vartheta}_{q}(x)=\mathrm{e}^{-\vartheta x}W_{q+\psi(\vartheta)}(x).
\end{equation}
In addition, denote by $W^{\vartheta}$ the $0$-scale function for $X$ under $\mathrm{P}_{x}^{\vartheta}$. For more detailed properties
concerning the exponential change of measure, we refer to Chapter 3 of \cite{Kyp2014}.

With the convention that  $\inf\emptyset=\infty$, the following notations of first passage times of the level $z\in (-\infty,\infty)$ by the process $X$ will be frequently used:
\begin{align}
\tau_{z}^{+}:=\inf\{t\geq 0: X(t)>z\} \,\,\, {\rm and} \,\,\, \tau_{z}^{-}:=\inf\{t\geq 0: X(t)<z\}.\nonumber
\end{align}
To model the underlying risk process when the regulator intervenes, let us also introduce another spectrally negative L\'evy process, denoted by $\widetilde{X}=\{\widetilde{X}(t);t\geq0\}$ with $(\widetilde{\gamma},\widetilde{\sigma},\widetilde{\upsilon})$,
$\widetilde{\psi}$ and $\widetilde{\Phi}_{q}$ representing its associated L\'evy triplet, the
Laplace exponent and the right inverse of Laplace exponent, respectively. Let $N$ (resp., $\widetilde{N}$), $\overline{N}$(resp., $\overline{\widetilde{N}}$), and $B$ (resp., $\widetilde{B}$) be, respectively, the Poisson random measure, the compensated Poisson random measure, and the Brownian motion of $X$ (resp., $\widetilde{X}$).

\subsection{Expected PVs of dividends with a barrier strategy}

To verify the optimality of a specific barrier strategy, we first consider the expected PVs of dividend payments in \eqref{expNPV-1} and \eqref{expNPV-2} under a barrier strategy with a barrier $d$. The construction of the piecewise underlying process $(U_d, I_d)$ follows Definition \ref{def001} by employing the barrier strategy. In particular, in item $(\textbf{a})$ of Definition \ref{def001}, the unique strong solution to the SDE \eqref{def4.1} can be constructed path by path that:\vspace{0.2in}
\begin{mdframed}
{\small
\begin{itemize}
\item If $I_{d}(\mathcal{T}_n)=0$, put
 $$X_{n+1}(t):=U_{d}(\mathcal{T}_n)+X(t+\mathcal{T}_n)-X(\mathcal{T}_n), \quad t\geq 0,$$ and define the process $U_{d}$ over $(\mathcal{T}_{n},\mathcal{T}_{n+1}]$ as
the process $X_{n+1}$ reflected from above at the level $d$, i.e.
\[U_{d}(\mathcal{T}_n+t):=X_{n+1}(t)-(\overline{X}_{n+1}(t)-d)\vee 0, \quad t\geq 0, \]
where $\overline{X}_{n+1}(t)=\sup_{0\leq s\leq t}X_{n+1}(s)$ denotes the running maximum process of $X_{n+1}$. Therefore, we have that $\mathcal{D}\neq\emptyset$.
\end{itemize}
}
\end{mdframed}\vspace{0.2in}
We shall consider two auxiliary moments of PVs of dividend payments that
\begin{align}\label{vn.tilvn}
\mathcal{V}_{n}(x):=\mathbb{E}_{x,0}\Big[\int_{0}^{T_{d}}\mathrm{e}^{-qt}\mathrm{d}D_{d}(t)\Big]^{n}
\quad \mbox{and} \quad
\widetilde{\mathcal{V}}_{n}(x):=\mathbb{E}_{x,1}\Big[\int_{0}^{T_{d}}\mathrm{e}^{-qt}
\mathrm{d}D_{d}(t)\Big]^{n},
\end{align}
where $n\geq 1$, and
\begin{align}\label{def.Dd}
D_{d}(t):=\sum_{n=1}^{\infty}\left[\left(\overline{X}_{n}\left(\mathcal{T}_{n}\wedge t-\mathcal{T}_{n-1}\right)-d\right)\vee 0\right]\mathbf{1}_{\{\mathcal{T}_{n-1}\leq t\}}
\mathbf{1}_{\{I_{d}(\mathcal{T}_{n-1})=0\}},\quad t\geq0,
\end{align}
representing total dividends paid on $[0,t]$, and we denote $T_{d}:=T_{D_{d}}$.
Let $U_{\infty}$ be the process $U_{d}$ with its dividend barrier $d=\infty$, i.e.,
$U_{\infty}(t):=\left.U_{d}(t)\right|_{d=\infty}$, $t\geq 0$, and no dividends are paid from $U_{\infty}$.
Denote by $\zeta _{z}^{\pm}(U_{\infty})$ and $T$, respectively, the first up(down)-crossing times of level $z$ and the Chapter 11 bankruptcy time for $U_{\infty}$, i.e.
\begin{align}\label{barzeta}
\quad \zeta _{z}^{\pm}(U_{\infty}):=\inf\{t\geq 0; U_{\infty}(t)\geq (<)z\}\quad \text{and}\quad T:=\left.T_{d}\right|_{d=\infty},
\end{align}
and we write $\zeta _{z}^{\pm}:=\zeta _{z}^{\pm}(U_{\infty})$ for simplicity.

We now introduce a key auxiliary function $\ell_{c}^{(q,\lambda)}(x)$ on $(-\infty,\infty)$ defined by
\begin{align}
\label{ell.def.}
\ell_{c}^{(q,\lambda)}(x)
&:=W_{q}(x)(1-
\mathrm{e}^{-\widetilde{\Phi}_{q+\lambda}c}Z_{q}(c,\widetilde{\Phi}_{q+\lambda}))
+\mathrm{e}^{-\widetilde{\Phi}_{q+\lambda}c}W_{q}(c)Z_{q}(x,\widetilde{\Phi}_{q+\lambda})
.
\end{align}
Note that $\ell_{c}^{(q,\lambda)}$ essentially plays the role of the scale function in future computations for our non-standard spectrally negative L\'{e}vy processes.

The next result gives some characterizations of two-sided exit problems for $U_{\infty}$.
\begin{lem}
\label{pro.01}
We have that
\begin{align}\label{twoside.exp.f}
\mathbb{E}_{x,0}\big[\mathrm{e}^{-q\zeta_{z}^{+}}\mathbf{1}_{\{\zeta_{z}^{+}<T\}}\big]
&=\ell_{c}^{(q,\lambda)}(x)/\ell_{c}^{(q,\lambda)}(z),\quad 0<x<z,\,\,c\leq z,
\\
\label{twoside.exp.02f}
\mathbb{E}_{x,1}\big[\mathrm{e}^{-q\zeta_{z}^{+}}\mathbf{1}_{\{\zeta_{z}^{+}<T\}}\big]
&=\mathrm{e}^{\widetilde{\Phi}_{q+\lambda}(x-c)}W_{q}(c)/\ell_{c}^{(q,\lambda)}(z),\quad x<c\leq z.
\end{align}
\end{lem}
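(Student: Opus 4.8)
The plan is to condition on the first time $U_\infty$ reaches the regime-switching level $0$, thereby turning the two claimed identities into a closed renewal-type system that can be solved explicitly with the exit formulas \eqref{two.side.e.}--\eqref{two.side.d.}. Fix $c\le z<\infty$ and abbreviate
\[
g(x):=\mathbb{E}_{x,0}\big[\mathrm{e}^{-q\zeta_{z}^{+}}\mathbf{1}_{\{\zeta_{z}^{+}<T\}}\big]\quad(0<x<z),\qquad
h(y):=\mathbb{E}_{y,1}\big[\mathrm{e}^{-q\zeta_{z}^{+}}\mathbf{1}_{\{\zeta_{z}^{+}<T\}}\big]\quad(y<c),
\]
with the understanding, consistent with \eqref{11bankrcy}, that every insolvency sojourn --- including the one in force under $\mathbb{P}_{y,1}$ --- carries its own independent $\mathrm{Exp}(\lambda)$ grace clock. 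Starting from $(x,0)$, $U_\infty$ evolves as a copy of $X$ issued from $x$ until $\mathcal{T}_{1}=\tau_{0}^{-}$. On $\{\tau_{z}^{+}<\tau_{0}^{-}\}$ the level $z$ is crossed before any insolvency period, hence before $T$, contributing $\mathbb{E}_{x}[\mathrm{e}^{-q\tau_{z}^{+}}\mathbf{1}_{\{\tau_{z}^{+}<\tau_{0}^{-}\}}]=W_{q}(x)/W_{q}(z)$ by \eqref{two.side.e.}; on $\{\tau_{0}^{-}<\tau_{z}^{+}\}$ the state switches to insolvency at the level $X(\tau_{0}^{-})\le 0<c$. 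Applying the strong Markov property of $X$ at $\tau_{0}^{-}$ together with the construction in Definition \ref{def001} then gives
\begin{equation}\label{sk.1}
g(x)=\frac{W_{q}(x)}{W_{q}(z)}+\mathbb{E}_{x}\Big[\mathrm{e}^{-q\tau_{0}^{-}}\mathbf{1}_{\{\tau_{0}^{-}<\tau_{z}^{+}\}}\,h\big(X(\tau_{0}^{-})\big)\Big].
\end{equation}

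Next I would handle a single insolvency excursion. Starting from $(y,1)$ with $y<c$, $U_\infty$ follows a copy of $\widetilde{X}$ issued from $y$; since $\widetilde{X}$ has no positive jumps it stays strictly below $c\le z$ and can first reach $c$ only by creeping upward, at time $\widetilde{\tau}_{c}^{+}:=\inf\{t\ge 0:\widetilde{X}(t)\ge c\}$, whereupon the regime switches back to solvency at the level exactly $c$. Thus $\{\zeta_{z}^{+}<T\}$ forces this sojourn to terminate before its grace clock, i.e.\ $\widetilde{\tau}_{c}^{+}$ must precede an independent $\mathrm{Exp}(\lambda)$ variable, and the post-switch evolution is, by the strong Markov property at the return time and the independence of the remaining clocks, an independent copy of $U_\infty$ issued from $(c,0)$. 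Writing $\widetilde{\mathbb{E}}_{y}$ for expectation under the law of $\widetilde{X}$ started at $y$ and using $\widetilde{\mathbb{E}}_{y}[\mathrm{e}^{-s\widetilde{\tau}_{c}^{+}}]=\mathrm{e}^{-\widetilde{\Phi}_{s}(c-y)}$ for $y\le c$ and $s\ge0$, this yields
\begin{equation}\label{sk.2}
h(y)=\widetilde{\mathbb{E}}_{y}\big[\mathrm{e}^{-(q+\lambda)\widetilde{\tau}_{c}^{+}}\big]\,g(c)=\mathrm{e}^{\widetilde{\Phi}_{q+\lambda}(y-c)}\,g(c),\qquad y<c.
\end{equation}

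It then remains to solve \eqref{sk.1}--\eqref{sk.2}. Substituting \eqref{sk.2} into \eqref{sk.1}, factoring out $\mathrm{e}^{-\widetilde{\Phi}_{q+\lambda}c}g(c)$ (legitimate since $X(\tau_{0}^{-})\le 0<c$), and applying \eqref{two.side.d.} with $\theta=\widetilde{\Phi}_{q+\lambda}$ and upper level $z$, I obtain
\begin{equation}\label{sk.3}
g(x)=\frac{W_{q}(x)}{W_{q}(z)}+g(c)\,\mathrm{e}^{-\widetilde{\Phi}_{q+\lambda}c}\Big(Z_{q}(x,\widetilde{\Phi}_{q+\lambda})-Z_{q}(z,\widetilde{\Phi}_{q+\lambda})\tfrac{W_{q}(x)}{W_{q}(z)}\Big),\qquad 0<x<z.
\end{equation}
Setting $x=c$ in \eqref{sk.3} (permitted since $0<c\le z$) produces a single scalar equation for $g(c)$; solving it and recognising, via \eqref{ell.def.}, that the coefficient of $g(c)$ equals $\ell_{c}^{(q,\lambda)}(z)/W_{q}(z)$ gives $g(c)=W_{q}(c)/\ell_{c}^{(q,\lambda)}(z)$. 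Feeding this back into \eqref{sk.3}, collecting the terms proportional to $W_{q}(x)$ and to $Z_{q}(x,\widetilde{\Phi}_{q+\lambda})$, and using the identity $\ell_{c}^{(q,\lambda)}(z)-\mathrm{e}^{-\widetilde{\Phi}_{q+\lambda}c}W_{q}(c)Z_{q}(z,\widetilde{\Phi}_{q+\lambda})=W_{q}(z)\big(1-\mathrm{e}^{-\widetilde{\Phi}_{q+\lambda}c}Z_{q}(c,\widetilde{\Phi}_{q+\lambda})\big)$, which is immediate from \eqref{ell.def.}, the right-hand side collapses to $\ell_{c}^{(q,\lambda)}(x)/\ell_{c}^{(q,\lambda)}(z)$; this is \eqref{twoside.exp.f}, and \eqref{twoside.exp.02f} follows at once by inserting $g(c)=W_{q}(c)/\ell_{c}^{(q,\lambda)}(z)$ into \eqref{sk.2}.

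I expect the crux to be the rigorous justification of the renewal identities \eqref{sk.1}--\eqref{sk.2}, not the algebra of the last step. One needs the strong Markov property of the piecewise process $(U,I)$ of Definition \ref{def001} at the switching times, the facts that the grace periods $\{e_{\lambda}^{k}\}_{k\ge1}$ attached to successive insolvency sojourns are i.i.d.\ and independent of the subsequent dynamics, and that $z\ge c$ together with the absence of positive jumps of $\widetilde{X}$ prevents $\zeta_{z}^{+}$ from ever occurring strictly inside an insolvency excursion (so that ``the first sojourn survives'' together with $\{\zeta_{z}^{+}<T\}$ for the restarted process is exactly $\{\zeta_{z}^{+}<T\}$ for the original one, and the restart really contributes the factor $g(c)$); the legitimacy of iterating \eqref{sk.1}--\eqref{sk.2} is then ensured by $\mathcal{T}_{2n}\to\infty$ (no explosion) and by $q\ge0$ with the convention $\mathrm{e}^{-q\cdot\infty}=0$. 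Finally one checks that all formulas pass continuously to the degenerate case $z=c$, where $g(c)=1$, $\ell_{c}^{(q,\lambda)}(c)=W_{q}(c)$, and \eqref{twoside.exp.02f} becomes the plain discounted Laplace transform $\mathrm{e}^{\widetilde{\Phi}_{q+\lambda}(x-c)}$.
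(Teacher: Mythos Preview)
Your proposal is correct and takes essentially the same approach as the paper: condition on the first exit from $(0,z)$ for the solvent process, reduce the insolvency sojourn to the discounted first-passage $\widetilde{\mathbb{E}}_{y}[\mathrm{e}^{-(q+\lambda)\widetilde{\tau}_{c}^{+}}]=\mathrm{e}^{\widetilde{\Phi}_{q+\lambda}(y-c)}$, and close the renewal equation by setting $x=c$ to identify $g(c)=W_{q}(c)/\ell_{c}^{(q,\lambda)}(z)$. The only cosmetic difference is that the paper carries out the insolvency computation inline rather than naming a separate function $h$, and omits the edge-case discussion at $z=c$ that you include.
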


\begin{proof}
Let $f(x):=\mathbb{E}_{x,0}\big[\mathrm{e}^{-q\zeta_{z}^{+}}; \zeta_{z}^{+}<T\big]$.
From Section 2 of \cite{ALIZ2016}, one can get the following well-known Laplace transform identities associated to two-sided exit problems that
\begin{align}
\label{two.side.e.}
\mathbb{E}_{x}\big[\mathrm{e}^{-q\tau_{z}^{+}}\mathbf{1}_{\{\tau_{z}^{+}<\tau_{0}^{-}\}}\big]
&=W_{q}(x)/W_{q}(z), \quad  x\leq z,
\\
\label{two.side.d.}
\quad \quad \mathbb{E}_{x}\big[\mathrm{e}^{-q\tau_{0}^{-}+\theta X(\tau_{0}^{-})}\mathbf{1}_{\{\tau_{0}^{-}<\tau_{z}^{+}\}}\big]
&=Z_{q}(x, \theta)-Z_{q}(z, \theta)W_{q}(x)/W_{q}(z), \quad  x\leq z.
\end{align}
Then, by \eqref{two.side.l.}, \eqref{two.side.e.} and \eqref{two.side.d.}, it is straightforward to check that
\small{\begin{align}\label{f.exp.f}
f(x)&=\mathbb{E}_{x,0}\big[\mathrm{e}^{-q\zeta_{z}^{+}}
\mathbf{1}_{\{\zeta_{z}^{+}<\zeta_{0}^{-}\}}\big]
+
\mathbb{E}_{x,0}\big[\mathrm{e}^{-q\zeta_{0}^{-}}\mathbf{1}_{\{\zeta_{0}^{-}<\zeta_{z}^{+}\}}
\mathbb{E}_{X(\zeta_{0}^{-}),1}\big[\mathrm{e}^{-q\zeta_{c}^{+}}\mathbf{1}_{\{\zeta_{c}^{+}<e_{\lambda}\}}
\big]f(c)\big]
\nonumber\\
&=
\mathbb{E}_{x}\Big[\mathrm{e}^{-q\tau_{z}^{+}}\mathbf{1}_{\{\tau_{z}^{+}<\tau_{0}^{-}\}}\Big]
+
\mathbb{E}_{x}\Big[\mathrm{e}^{-q\tau_{0}^{-}}\mathbf{1}_{\{\tau_{0}^{-}<\tau_{z}^{+}\}}
\mathbb{E}_{X(\tau_{0}^{-})}\big[\mathrm{e}^{-(q+\lambda)\widetilde{\tau}_{c}^{+}}\big]
\Big]f(c)
\nonumber\\
&=
\mathbb{E}_{x}\Big[\mathrm{e}^{-q\tau_{z}^{+}}\mathbf{1}_{\{\tau_{z}^{+}<\tau_{0}^{-}\}}\Big]
+
\mathbb{E}_{x}\Big[\mathrm{e}^{-q\tau_{0}^{-}}\mathbf{1}_{\{\tau_{0}^{-}<\tau_{z}^{+}\}}
\mathrm{e}^{\widetilde{\Phi}_{q+\lambda}(X(\tau_{0}^{-})-c)}
\Big]f(c)
\nonumber\\
&=
W_{q}(x)/W_{q}(z)+\mathrm{e}^{-\widetilde{\Phi}_{q+\lambda}c}
\big[Z_{q}(x,\widetilde{\Phi}_{q+\lambda})
-W_{q}(x)Z_{q}(z,\widetilde{\Phi}_{q+\lambda})/W_{q}(z)\big]f(c),
\end{align}}where $\widetilde{\tau}_{z}^{+}$ and $\widetilde{\Phi}_{q}$ are defined in the same manner of $\tau_{z}^{+}$ and $\Phi_{q}$ (of which the definitions are given in Section \ref{sec:pre}) but with the underlying process $X$ replaced by $\widetilde{X}$. 
In view of the expression in \eqref{ell.def.}, plugging $x=c$ back into \eqref{f.exp.f} yields that $f(c)=\frac{W_{q}(c)}{\ell_{c}^{(q,\lambda)}(z)}$, which combined with \eqref{f.exp.f} implies the desired result \eqref{twoside.exp.f}. In addition, the identity \eqref{twoside.exp.02f} follows easily from \eqref{two.side.l.}, \eqref{twoside.exp.f} as well as the facts that
\begin{eqnarray}
    \mathbb{E}_{x,1}\big[\mathrm{e}^{-q\zeta_{z}^{+}}\mathbf{1}_{\{\zeta_{z}^{+}<T\}}\big]
    =\mathbb{E}_{x,1}\big[\mathrm{e}^{-q\zeta_{c}^{+}}\mathbf{1}_{\{\zeta_{c}^{+}<e_{\lambda}\}}\big]\mathbb{E}_{c,0}\big[\mathrm{e}^{-q\zeta_{z}^{+}}\mathbf{1}_{\{\zeta_{z}^{+}<T\}}\big], \quad x<c\leq z,\nonumber
\end{eqnarray}
and $\ell_{c}^{(q,\lambda)}(c)=W_{q}(c)$.
\end{proof}

\begin{rem}
\label{rem01}
It follows from \eqref{twoside.exp.f} that the function $\ell_{c}^{(q,\lambda)}(x)$ is increasing on $(0,\infty)$. Indeed, due to the fact that $U$ has no positive jumps, we have $\mathbb{P}_{x,0}\left(\zeta_{y}^{+}>0\right)=1$ for $y>x$. As a result, it holds that
\begin{align*}
\ell_{c}^{(q,\lambda)}(x)/\ell_{c}^{(q,\lambda)}(y)=
\mathbb{E}_{x,0}[\mathrm{e}^{-q\zeta_{y}^{+}}\mathbf{1}_{\{\zeta_{y}^{+}<T\}}]<1 \Rightarrow \ell_{c}^{(q,\lambda)}(x)<\ell_{c}^{(q,\lambda)}(y),\quad c\leq x<y.
\end{align*}
In addition, we have the desired result that
\begin{align}
&\ell_{c}^{(q,\lambda)}(x)/\ell_{c}^{(q,\lambda)}(z)=\mathbb{E}_{x,0}[\mathrm{e}^{-q\zeta_{z}^{+}}\mathbf{1}_{\{\zeta_{z}^{+}<T\}}]
\leq
\mathbb{E}_{y,0}[\mathrm{e}^{-q\zeta_{z}^{+}}\mathbf{1}_{\{\zeta_{z}^{+}<T\}}]=\ell_{c}^{(q,\lambda)}(y)/\ell_{c}^{(q,\lambda)}(z)
\nonumber\\
&\Rightarrow \ell_{c}^{(q,\lambda)}(x)\leq \ell_{c}^{(q,\lambda)}(y), \quad 0<x<y<c<z.\nonumber
\end{align}
\end{rem}


Recall that the $n$-th moments of PV of dividends paid until Chapter 11 bankruptcy time are defined in \eqref{vn.tilvn} and \eqref{def.Dd}.
The next result gives important recursive formulas on moments of PVs of dividends. In particular, the expected PVs under a barrier strategy can be explicitly expressed in terms of scale functions; see similar results in \cite{ReZ07} for standard spectrally negative L\'{e}vy processes.

\begin{proposition}\label{thm3-1}
The $n$-th moment of PV of dividend payments until Chapter 11 bankruptcy time with the initial surplus $x=d$ admits the explicit form that
\begin{align}\label{nth.mom.}
\mathcal{V}_n(d)=n!\prod_{k=1}^{n}\ell_{c}^{(kq,\lambda)}(d)/\ell_{c}^{(kq,\lambda)\prime}(d).
\end{align}
Moreover, for arbitrary $x$, we obtain recursive equations that
\begin{align}\label{generalx-1}
\mathcal{V}_{n}(x)
&=
\frac{\ell_{c}^{(nq,\lambda)}(x)}{\ell_{c}^{(nq,\lambda)}(d)}
\mathcal{V}_{n}(d)\mathbf{1}_{(0,d]}(x)+\sum_{k=0}^{n}
 {n  \choose
  k}(x-d)^{k} \,\mathcal{V}_{n-k}(d)\mathbf{1}_{\left(d,\infty\right)}(x),
\\
\label{generalx-2}
\widetilde{\mathcal{V}}_{n}(x)
&=
\mathrm{e}^{\widetilde{\Phi}_{nq+\lambda}(x-c)}W_{nq}(c)\mathcal{V}_{n}(d)/\ell_{c}^{(nq,\lambda)}(d), \quad x\in\left(-\infty, c\right).
\end{align}
As a consequence, 
the expected PVs of dividend payments satisfy
\begin{align}
\label{V1.gen.}
\mathcal{V}_{1}(x)&=\ell_{c}^{(q,\lambda)}(x)/\ell_{c}^{(q,\lambda)\prime}(d)\mathbf{1}_{\left(0, d\right]}( x)+
\big[\ell_{c}^{(q,\lambda)}(d)/\ell_{c}^{(q,\lambda)\prime}(d)+x-d\big]\mathbf{1}_{\left(d,\infty\right)}(x),
\\
\label{V1.gen.001}
\widetilde{\mathcal{V}}_{1}(x)&=\mathrm{e}^{\widetilde{\Phi}_{q+\lambda}(x-c)}W_{q}(c)/\ell_{c}^{(q,\lambda)\prime}(d), \quad x\in\left(-\infty, c\right).
\end{align}
\end{proposition}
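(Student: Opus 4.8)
The plan is to establish the recursive formula \eqref{nth.mom.} first, then bootstrap to general $x$ via \eqref{generalx-1}--\eqref{generalx-2}, and finally specialize to $n=1$. The key structural observation is that the process $U_\infty$ (no dividends) enjoys a strong-Markov-type behaviour at the switching times $\mathcal{T}_k$, and that Lemma \ref{pro.01} provides the two-sided exit quantities that will play the role of scale-function ratios. For the first moment $\mathcal{V}_1(d)$, I would condition on the first passage of the reflected process $U_d$ below $0$ (which happens at some switching time), decompose the dividend stream into the dividends paid before down-crossing $0$ and the discounted continuation from level $c$ after a successful climb-back (weighted by the probability of avoiding the exponential grace clock), and use the excursion-theoretic identity for dividends paid by a reflected spectrally negative L\'evy process started at its barrier. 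Concretely, writing $\mathcal{V}_1(d)=g_1(d)+\mathbb{E}_{d,0}[\mathrm{e}^{-q\zeta_0^-}\mathbf{1}_{\{\zeta_0^-<T\}}\,\mathbb{E}_{X(\zeta_0^-),1}[\mathrm{e}^{-q\zeta_c^+}\mathbf{1}_{\{\zeta_c^+<e_\lambda\}}]\,\mathcal{V}_1(c)]$ where $g_1(d)$ is the expected discounted dividends of the reflected L\'evy process killed at $\tau_0^-$, and then using \eqref{two.side.e.}, \eqref{two.side.d.} together with the classical identity $g_1(d)=W_q(d)/W_q'(d)$-type expressions, one solves the resulting fixed-point equation for $\mathcal{V}_1(d)$. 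After algebraic simplification using the definition \eqref{ell.def.} of $\ell_c^{(q,\lambda)}$, the self-referential terms collapse and one obtains $\mathcal{V}_1(d)=\ell_c^{(q,\lambda)}(d)/\ell_c^{(q,\lambda)\prime}(d)$.

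For the higher moments I would use the standard ``iterated expectation'' trick for moments of additive functionals: $\mathcal{V}_n(x)=n\,\mathbb{E}_{x,0}\!\left[\int_0^{T_d}\mathrm{e}^{-qt}\big(\int_t^{T_d}\mathrm{e}^{-q(s-t)}\mathrm{d}D_d(s)\big)^{n-1}\mathrm{d}D_d(t)\right]$, which by the Markov property of $(U_d,I_d)$ at time $t$ expresses $\mathcal{V}_n$ through $\mathcal{V}_{n-1}$ evaluated at the post-dividend state. Since dividends are only paid from the barrier level $d$ while in the solvency regime, the integrand is $\mathcal{V}_{n-1}(d)$ at each time a unit of $\mathrm{d}D_d$ is paid, and the discounting by $\mathrm{e}^{-qt}$ inside turns into a discounting at rate $nq$ in the outer expectation (each of the $n$ copies of $\mathrm{e}^{-qt}$ or $\mathrm{e}^{-q(s-t)}$ combine). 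Thus $\mathcal{V}_n(d)=n\,\mathcal{V}_{n-1}(d)\cdot\mathbb{E}_{d,0}^{(nq)}[\int_0^{T_d}\mathrm{e}^{-nqt}\mathrm{d}D_d(t)]=n\,\mathcal{V}_{n-1}(d)\cdot\big(\ell_c^{(nq,\lambda)}(d)/\ell_c^{(nq,\lambda)\prime}(d)\big)$ by the $n=1$ result applied with discount rate $nq$, and \eqref{nth.mom.} follows by induction on $n$.

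To get general $x\in(0,d]$, note that dividends only accrue after $U_\infty$ first reaches $d$; by the strong Markov property and Lemma \ref{pro.01} the discounted dividend functional factors as $\mathbb{E}_{x,0}[\mathrm{e}^{-nq\zeta_d^+}\mathbf{1}_{\{\zeta_d^+<T\}}]=\ell_c^{(nq,\lambda)}(x)/\ell_c^{(nq,\lambda)}(d)$ times $\mathcal{V}_n(d)$, giving the first term of \eqref{generalx-1}. For $x>d$ an immediate lump dividend of size $x-d$ is paid instantly, and a binomial expansion of $(\,(x-d)+\int_0^{T_d}\mathrm{e}^{-qt}\mathrm{d}D_d(t)\,)^n$ started from $d$ gives the sum in \eqref{generalx-1}. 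Formula \eqref{generalx-2} is obtained similarly: from an insolvent state $x<c$ no dividends are paid until $U_\infty$ climbs to $c$, so $\widetilde{\mathcal{V}}_n(x)=\mathbb{E}_{x,1}[\mathrm{e}^{-nq\zeta_c^+}\mathbf{1}_{\{\zeta_c^+<T\}}]\,\mathcal{V}_n(d)=(\mathrm{e}^{\widetilde{\Phi}_{nq+\lambda}(x-c)}W_{nq}(c)/\ell_c^{(nq,\lambda)}(d))\,\mathcal{V}_n(d)$ by \eqref{twoside.exp.02f} with discount $nq$. Finally, \eqref{V1.gen.} and \eqref{V1.gen.001} are just the $n=1$ specializations. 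The main obstacle I anticipate is the careful bookkeeping in the moment recursion — justifying the interchange of discounting across the $n$ factors and handling the left-continuity of $D_d$ together with the lump-sum convention at the barrier — rather than any deep analytic difficulty; once the reflected-L\'evy dividend identity and Lemma \ref{pro.01} are in hand, the rest is a disciplined induction.
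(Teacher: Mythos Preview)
Your approach is viable but follows a genuinely different route from the paper's. The paper does \emph{not} compute $\mathcal{V}_1(d)$ first and then induct; instead it runs a single perturbation argument valid for every $k$ simultaneously. Concretely, the paper looks at the first up-crossing time $\zeta_{d+\varepsilon}^+$ of the uncontrolled process $U_\infty$, shows that the contributions from the events $\{\zeta_{d+\varepsilon}^+<T_d\}$ and $\{T_d<\zeta_{d+\varepsilon}^+\}$ with small dividends are $o(\varepsilon)$, applies the strong Markov property at $\zeta_{d+\varepsilon}^+$ together with a binomial expansion, and compares with a suboptimal strategy that waits until $d+\varepsilon$ before paying a lump sum. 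Taking $\varepsilon\downarrow 0$ yields both inequalities in
\[
k\,\mathcal{V}_{k-1}(d)=\mathcal{V}_k(d)\,\frac{\ell_c^{(kq,\lambda)\prime}(d)}{\ell_c^{(kq,\lambda)}(d)},
\]
directly, using only Lemma~\ref{pro.01}. This is entirely self-contained: no exit identity for the \emph{reflected} process is ever invoked.

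By contrast, your $n=1$ step hinges on the joint Laplace transform $\mathbb{E}_d[\mathrm{e}^{-q\sigma_0^- + \widetilde{\Phi}_{q+\lambda}Y(\sigma_0^-)}]$ for the process $Y$ reflected at $d$, together with the classical dividends-until-ruin formula. These are known (Pistorius, Avram--Kyprianou--Pistorius) but are not proved in the paper, so your argument imports machinery that the paper's perturbation route avoids. Your moment recursion for $n\geq 2$ via $\mathcal{V}_n(d)=n\,\mathcal{V}_{n-1}(d)\cdot\mathbb{E}_{d,0}\big[\int_0^{T_d}\mathrm{e}^{-nqt}\mathrm{d}D_d(t)\big]$ is correct and arguably cleaner than the paper's repetition of the $\varepsilon$-argument at each level, since the support of $\mathrm{d}D_d$ is $\{U_d=d\}$ and the Markov property factors the integrand. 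One small slip: in your derivation of $\widetilde{\mathcal{V}}_n(x)$ you should apply \eqref{twoside.exp.02f} with $z=d$ (first passage to $d$, where dividends begin), not $z=c$; you wrote $\zeta_c^+$ but then correctly quoted the formula with denominator $\ell_c^{(nq,\lambda)}(d)$, so the intended target is right even if the notation wobbled.
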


\begin{proof}
In view of $\zeta _{d + \varepsilon }^ +$ defined in \eqref{barzeta},
we claim that, for any positive integer $n\geq1$, the next two equations hold that
\begin{align} \label{expression1oflemma2}
{\mathbb{E}_{d,0}}\Big[ \Big[ {\int_0^{\zeta _{d +  \varepsilon }^ + } {{\mathrm{e}^{ - q s}}D_{d}(s)\mathrm{d}s} } \Big]^n{{\mathbf{1}}_{\{ {\zeta _{d +  \varepsilon }^ +  < {T_{d} }} \}}} \Big] &= o\left( { \varepsilon } \right),
\\ \label{expression2oflemma2}
{\mathbb{E}_{d,0}}\Big[ \Big[{\int_0^{{T_{d} }} {{\mathrm{e}^{ - q s}}\mathrm{d}D_{d}( s)} } \Big]^n{{\mathbf{1}}_{\{ {{T_{d} } < \zeta _{d +\varepsilon }^ + } \}}} \Big] &=  o\left(\varepsilon \right).
\end{align}
In fact, $U_{\infty}\left( {\zeta _{d + \varepsilon }^ + } \right) = d + \varepsilon $ implies $ D_{d}(s)\leq  \varepsilon$ for all $s\in[0,{\zeta _{d + \varepsilon }^ + }]$. Hence, due to $T_{d}\leq T$ almost surely, the left hand side of \eqref{expression1oflemma2} is less than
\begin{align}
&{ \varepsilon^n }{\mathbb{E}_{d,0}}\big[ \big[ \int_0^{\zeta _{d +  \varepsilon }^ + } \mathrm{e}^{ - q s}\mathrm{d}s  \big]^{n}\mathbf{1}_{\{ {\zeta _{d +  \varepsilon }^ +  < T} \}}\big]\nonumber\\
\leq&
\big[\varepsilon^{n}/q^{n}\big]\big[ {{\mathbb{E}_{d,0}}\big[ {{{\mathbf{1}}_{\{ {\zeta _{d + \varepsilon}^ +  < T} \}}}} \big] - {\mathbb{E}_{d,0}}\big[ {{\mathrm{e}^{ - q \zeta _{d + \varepsilon}^ + }}{{\mathbf{1}}_{\{ {\zeta _{d + \varepsilon}^ +  < T} \}}}} \big]} \big]
 \nonumber \\
=&\big[\varepsilon^{n}/q^{n}\big] \big[\ell_{c}^{(0,\lambda)}(d)/\ell_{c}^{(0,\lambda)}(d+\varepsilon)- \ell_{c}^{(q,\lambda)}(d)/\ell_{c}^{(q,\lambda)}(d+\varepsilon)\big]=o\left( { \varepsilon } \right), n\geq 1,\nonumber
\end{align}
which gives (\ref{expression1oflemma2}). In view of integration by parts and the fact that $\zeta_{\varepsilon}^{-} \leq \zeta_{0}^{-}(U_{d})\leq T_{d}$ almost surely on $\{T_{d}<\zeta_{d+\varepsilon}^{+}\}$, the left hand side of \eqref{expression2oflemma2} can be rewritten as
\begin{align}
&\mathbb{E}_{d,0}\big[ \big[\mathrm{e}^{ - q T_{d} }D_{d}( T_{d} ) + q \int_0^{{T_{d} }} \mathrm{e}^{ - q s}D_{d}(s)\mathrm{d}s\big]^{n}{\mathbf{1}_{\{ {{T_{d} } < \zeta _{d + \varepsilon }^ + } \}}} \big]\notag\\
 \leq &
 \mathbb{E}_{d,0}\big[ {\big[ \varepsilon \mathrm{e}^{ - q T_{d} }  + \varepsilon \int_0^{{T_{d} }}  {q\mathrm{e}^{ - q s}} \mathrm{d}s \big]^{n}{\mathbf{1}_{\{ {\zeta_{\varepsilon}^{-} < \zeta _{d +  \varepsilon }^ + } \}}}} \big]
\nonumber \\
= &\varepsilon^{n}\big[ Z(d-\varepsilon)-Z(d)W(d-\varepsilon)/W(d)\big]= o(\varepsilon),\quad n\geq 1,\nonumber
\end{align}
which verifies \eqref{expression2oflemma2}. It follows from \eqref{expression2oflemma2} that
\begin{align*}
\mathcal{V}_{k}\left( d \right) = \mathbb{E}_{d,0}\big[ \big[\int_{0}^{T_{d}}\mathrm{e}^{-qt}\mathrm{d}D_{d}(t)\big]^{k}\mathbf{1}_{\{\zeta _{d +\varepsilon }^{+}<T_{d}\}} \big]   + o(\varepsilon)
, \quad k\geq 1.
\end{align*}
Using the strong Markov property
and the Binomial Theorem, we can rewrite the first term on the right hand side of the above equation as
{\small\begin{align}
&\ \sum\limits_{i = 0}^k {k \choose i} \mathcal{V}_{k - i}( d)\,\mathbb{E}_{d,0}\big[ \big[ \int_0^{\zeta _{d +  \varepsilon }^ + } \mathrm{e}^{ - q s}\mathrm{d}D_{d}( s) \big]^i \mathrm{e}^{ - (k - i)q \zeta_{d +  \varepsilon }^ +}  \mathbf{1}_{\{ \zeta_{d +  \varepsilon }^ +  < T_{d} \}} \big]
\nonumber \\=&
\sum\limits_{i = 0}^k {k \choose i} \mathcal{V}_{k - i}(d)
\sum\limits_{j = 0}^i {i\choose j} \varepsilon^{j}\mathbb{E}_{d,0}\big[ \mathrm{e}^{ - q (k - i + j)\zeta_{d + \varepsilon}^+ } q ^{i - j}\big[ \int_0^{\zeta_{d + \varepsilon}^ + } \mathrm{e}^{ - q s}D_{d}(s)\mathrm{d}s\big]^{i - j}\mathbf{1}_{\{ \zeta_{d + \varepsilon}^ +  < T_{d} \}} \big].
\label{eqn:pass:-:3}
\end{align}}
Thanks to \eqref{expression1oflemma2}, \eqref{expression2oflemma2} and the fact that $T_{d}\leq T$ almost surely, it is sufficient to only consider the sum with $j=i=1$ or $j=i=0$ in (\ref{eqn:pass:-:3}) to get
{\small
\begin{align*}
\mathcal{V}_{k}\left( d \right)=& {\mathcal{V}_{k}}\left( d\right){\mathbb{E}_{d,0}}\big[ {{\mathrm{e}^{ - kq \zeta_{d +  \varepsilon }^ + }}{{\mathbf{1}}_{\{ {\zeta_{d +  \varepsilon }^ +  < {T_{d} }} \}}}} \big]+ k{\mathcal{V}_{k - 1}}\left( d\right){\mathbb{E}_{d,0}}\big[ { \varepsilon {\mathrm{e}^{ - kq \zeta_{d +  \varepsilon }^ + }}{{\mathbf{1}}_{\{ {\zeta_{d +  \varepsilon }^ +  < {T_{d} }} \}}}} \big]+ o\left( { \varepsilon } \right)
\nonumber\\
\leq&
\big({\mathcal{V}_{k}}(d) + k\varepsilon{\mathcal{V}_{k - 1}}(d)\big)
{\mathbb{E}_{d,0}}\big[ {{\mathrm{e}^{ - kq \zeta_{d +  \varepsilon }^ + }}{{\mathbf{1}}_{\{ {\zeta_{d +  \varepsilon }^ +  < T} \}}}} \big]
+ o\left( { \varepsilon } \right)
\nonumber \\
=&
\left({\mathcal{V}_{k}}(d) + k\varepsilon{\mathcal{V}_{k - 1}}(d)\right)
\ell_{c}^{(kq,\lambda)}(d)/\ell_{c}^{(kq,\lambda)}(d+\varepsilon)
+ o(\varepsilon),\nonumber
\end{align*}}which can be rearranged to
\begin{align}\label{eqn:pass:-:4}
&k\mathcal{V}_{k-1}(d) \,\ell_{c}^{(kq,\lambda)}(d)
\geq \mathcal{V}_{k}(d)\,
\big(\ell_{c}^{(kq,\lambda)}(d+\varepsilon)-\ell_{c}^{(kq,\lambda)}(d)
\big)/
\varepsilon + o(\varepsilon).
\end{align}
Letting $\varepsilon\rightarrow0$ in \eqref{eqn:pass:-:4}
, we get that
\begin{align}\label{diff.eq.}
k\mathcal{V}_{k-1}(d)\geq &\mathcal{V}_{k}(d)\,
\ell_{c}^{(kq,\lambda)\prime}(d)/\ell_{c}^{(kq,\lambda)}(d).
\end{align}
It remains to show that the reverse inequality of \eqref{diff.eq.} also holds. To this end, we consider the scenario that dividends are postponed until the level $d+\varepsilon$ is attained by the surplus process, at which instant a lump sum of amount $\varepsilon$ is paid as dividend and the dividend is again paid according to the barrier level $d$ afterwards. Due to the discounting, the aforementioned dividend payment yields a $k$-th moment of the discounted dividends being less than that of the barrier dividend strategy with the barrier $d$. As a result, it holds that
\begin{align*}
\mathcal{V}_{k}(d)
\geq&  \mathbb{E}_{d,0}\big[ \mathrm{e}^{ - kq \zeta_{d +  \varepsilon }^ + }\mathbf{1}_{\{ {\zeta_{d +  \varepsilon }^ +  < T} \}} \big]
\sum_{i=0}^{k}
 {k  \choose
  i}\varepsilon^{i} \,\mathcal{V}_{k-i}(d)\notag\\
=&
\big[\mathcal{V}_{k}(d) + k\varepsilon{\mathcal{V}_{k - 1}}(d)\big]
\ell_{c}^{(kq,\lambda)}(d)/\ell_{c}^{(kq,\lambda)}(d+\varepsilon) + o(\varepsilon),\nonumber
  \end{align*}
which leads to the reverse inequality in \eqref{diff.eq.} after taking the limit and rearranging the terms. Hence, $\mathcal{V}_{k}(d)$ satisfies the recursive formula in \eqref{nth.mom.}.

Finally, recalling that dividend payments are delayed until the first time when the surplus process up-crosses the barrier level $d$, we can derive by Markov property that \eqref{generalx-1} and \eqref{generalx-2} hold. As a direct result, the expected PVs under a barrier strategy satisfy the explicit expressions \eqref{V1.gen.} and \eqref{V1.gen.001}.
\end{proof}

\section{Optimal Barrier and Verification of the Optimality}\label{sec:ver}

For fixed $0<c<\infty$, with the convention $\sup\emptyset=-\infty$, let us define a candidate optimal barrier
\begin{align}\label{def.d*}
d^{*}:=\sup\{d\geq c: \ell^{(q,\lambda)\prime}_{c}(x)\geq \ell^{(q,\lambda)\prime}_{c}(d)\mbox{ for all }x\geq c\},
\end{align}
which is the largest value at which the function $d\mapsto\ell^{(q,\lambda)\prime}_{c}(d)$ on $[c,\infty)$ attains its minimum. It is conjectured that the barrier strategy with barrier level $d^{*}$ in \eqref{def.d*} solves the problem \eqref{op.str.}. We first present the next two lemmas as preparations.

\begin{lem}\label{lem001plus}
Suppose that $g_{1}$ and $g_{2}$ are continuous on $[a_{1},a_{2}]$, $g_{1}$ has both right-hand and left-hand derivatives on $(a_{1},a_{2})$, $g_{2}$ is continuously differentiable on $(a_{1},a_{2})$ with $g_{2}(a_{2})\neq g_{2}(a_{1})$ and $g_{2}^{\prime}(x)>0$ for $x\in (a_{1},a_{2})$.
Then there exists $\zeta\in(a_{1},a_{2})$ that
{\small\begin{align*}
\frac{\min\{g_{1}^{\prime-}(\zeta),g_{1}^{\prime+}(\zeta)\}}{g_{2}^{\prime}(\zeta)}\leq\frac{g_{1}(a_{2})-g_{1}(a_{1})}{g_{2}(a_{2})-g_{2}(a_{1})}\leq \frac{\max\{g_{1}^{\prime-}(\zeta),g_{1}^{\prime+}(\zeta)\}}{g_{2}^{\prime}(\zeta)},
\end{align*}}where $g_{1}^{\prime-}$ and $g_{1}^{\prime+}$ denote the right-hand and left-hand derivative of $g_{1}$, respectively.
\end{lem}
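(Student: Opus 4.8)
The plan is to reduce the claim to a one-sided-derivative version of the Cauchy mean value theorem by building an auxiliary function that takes equal values at the two endpoints. Since $g_{2}(a_{2})\neq g_{2}(a_{1})$, set
\[
K:=\frac{g_{1}(a_{2})-g_{1}(a_{1})}{g_{2}(a_{2})-g_{2}(a_{1})}\qquad\text{and}\qquad h(x):=g_{1}(x)-K\bigl(g_{2}(x)-g_{2}(a_{1})\bigr),\quad x\in[a_{1},a_{2}].
\]
Then $h$ is continuous on $[a_{1},a_{2}]$ with $h(a_{1})=g_{1}(a_{1})=h(a_{2})$, and since $g_{1}$ has one-sided derivatives on $(a_{1},a_{2})$ while $g_{2}$ is continuously differentiable there, $h$ has one-sided derivatives on $(a_{1},a_{2})$ given by $h^{\prime\pm}(x)=g_{1}^{\prime\pm}(x)-Kg_{2}^{\prime}(x)$.

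Next I would distinguish two cases. If $h$ is constant on $[a_{1},a_{2}]$, then $g_{1}^{\prime\pm}(x)=Kg_{2}^{\prime}(x)$ for all $x\in(a_{1},a_{2})$, so any $\zeta\in(a_{1},a_{2})$ makes both displayed inequalities equalities after division by $g_{2}^{\prime}(\zeta)>0$. Otherwise $h$ is non-constant; being continuous on a compact interval it attains a global maximum and a global minimum, and because $h(a_{1})=h(a_{2})$ at least one of these extreme values is different from $h(a_{1})$ and is therefore attained at some interior point $\zeta\in(a_{1},a_{2})$.

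At such an interior extremum the one-sided difference quotients of $h$ have opposite (weak) signs: at a local maximum, $h(x)\le h(\zeta)$ near $\zeta$ forces the right difference quotient to be $\le0$ and the left difference quotient to be $\ge0$, and symmetrically at a local minimum, so in either case
\[
\min\{h^{\prime-}(\zeta),h^{\prime+}(\zeta)\}\le 0\le\max\{h^{\prime-}(\zeta),h^{\prime+}(\zeta)\}.
\]
Substituting $h^{\prime\pm}(\zeta)=g_{1}^{\prime\pm}(\zeta)-Kg_{2}^{\prime}(\zeta)$, using that adding a constant commutes with $\min$ and $\max$, and dividing by $g_{2}^{\prime}(\zeta)>0$ yields
\[
\frac{\min\{g_{1}^{\prime-}(\zeta),g_{1}^{\prime+}(\zeta)\}}{g_{2}^{\prime}(\zeta)}\le K\le\frac{\max\{g_{1}^{\prime-}(\zeta),g_{1}^{\prime+}(\zeta)\}}{g_{2}^{\prime}(\zeta)},
\]
which is the assertion since $K=\frac{g_{1}(a_{2})-g_{1}(a_{1})}{g_{2}(a_{2})-g_{2}(a_{1})}$.

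I do not anticipate a genuine difficulty here: this is essentially the classical Cauchy mean value theorem, and the only points needing care are isolating the degenerate constant case (so that $\zeta$ can be taken strictly inside $(a_{1},a_{2})$) and the sign bookkeeping for the one-sided derivatives at the interior extremum---slightly fiddly because of the paper's labelling of $g_{1}^{\prime-}$ and $g_{1}^{\prime+}$, though taking $\min$ and $\max$ over the pair $\{g_{1}^{\prime-}(\zeta),g_{1}^{\prime+}(\zeta)\}$ removes any need to track which one-sided derivative is which.
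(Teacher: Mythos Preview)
Your argument is correct and is precisely the standard Rolle-type proof of the Cauchy mean value theorem adapted to one-sided derivatives. The paper itself does not give a proof of this lemma at all, simply stating that ``The proof is standard and hence omitted,'' so there is nothing further to compare.
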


\begin{proof}
The proof is standard and hence omitted.
\end{proof}

Recall that a function $f$ defined on $(0,\infty)$ is  log-convex if the function $\log f$ is convex on $(0,\infty)$, and that the log-convexity implies convexity. 

\begin{lem}\label{lem002plus}
Suppose that $W_{q}^{\prime}(x)$ is log-convex. Then
$[W_{q}^{\prime}]^{\prime+}(x)/W_{q}^{\prime}(x)\uparrow\Phi_{q}$ and $[W_{q}^{\prime}]^{\prime-}(x)/W_{q}^{\prime}(x)\uparrow\Phi_{q}$ as $x\uparrow\infty.$
\end{lem}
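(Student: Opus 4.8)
The plan is to exploit the two asymptotic relations in \eqref{s.f.l.} together with the log-convexity of $W_q'$. First I would write $h(x):=\log W_q'(x)$, which is convex on $(0,\infty)$ by hypothesis, so that $h$ has nondecreasing right-hand and left-hand derivatives and $h^{\prime\pm}(x)=\left[W_q'\right]^{\prime\pm}(x)/W_q'(x)$ wherever these exist; monotone nondecreasing functions have one-sided derivatives everywhere, so the quantities in the statement are well defined and each is nondecreasing in $x$. Thus the only thing left is to identify the common limit as $x\uparrow\infty$.

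Next I would extract the limit from the first relation in \eqref{s.f.l.}, namely $W_q'(x)/W_q(x)\to\Phi_q$. Since $\log W_q'$ is convex, for any fixed increment $\delta>0$ and any $x$ one has the slope sandwich
\begin{align}
h^{\prime+}(x)\;\leq\;\frac{h(x+\delta)-h(x)}{\delta}\;\leq\;h^{\prime-}(x+\delta).
\nonumber
\end{align}
The middle term equals $\frac{1}{\delta}\log\frac{W_q'(x+\delta)}{W_q'(x)}$. Now observe that $W_q'(x+\delta)/W_q'(x)=\big(W_q'(x+\delta)/W_q(x+\delta)\big)\big(W_q(x+\delta)/W_q(x)\big)\big(W_q(x)/W_q'(x)\big)$, and letting $x\to\infty$ the first and third factors tend to $\Phi_q$ and $1/\Phi_q$ respectively by \eqref{s.f.l.}, while the middle factor tends to $\mathrm{e}^{\Phi_q\delta}$ by the second relation in \eqref{s.f.l.}; hence $W_q'(x+\delta)/W_q'(x)\to \mathrm{e}^{\Phi_q\delta}$ and the middle term of the sandwich tends to $\Phi_q$ for every $\delta>0$. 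Combined with monotonicity of $h^{\prime\pm}$, a standard squeeze then forces $h^{\prime+}(x)\uparrow\Phi_q$ and $h^{\prime-}(x)\uparrow\Phi_q$ as $x\to\infty$: the limits exist by monotonicity, the displayed inequality with $\delta$ fixed shows $\limsup_x h^{\prime+}(x)\le\Phi_q\le\liminf_x h^{\prime-}(x)$, and since $h^{\prime+}\le h^{\prime-}\le h^{\prime+}$ up to the countably many kinks (more precisely $h^{\prime+}(x)\le h^{\prime-}(y)$ for $x<y$ and $h^{\prime-}(x)\le h^{\prime+}(x)$), both one-sided derivatives share the value $\Phi_q$ in the limit. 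One minor point: I should note $\Phi_q>0$ here, which holds when $q>0$, or more generally under the standing assumption that $X$ has no monotone paths together with $q>0$; if $q=0$ and $\Phi_0=0$ the statement should be read accordingly, but the application in Lemma~\ref{lem002plus} is for $q>0$.

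The main obstacle, and the place requiring the most care, is the bookkeeping with one-sided derivatives of the convex function $h=\log W_q'$: $W_q'$ itself need only have one-sided derivatives (it is genuinely only piecewise-smooth in the bounded-variation case with a non-smooth Lévy tail), so I cannot differentiate freely and must consistently use the convex-analysis facts that (i) $h^{\prime+}$ and $h^{\prime-}$ exist everywhere and are nondecreasing, (ii) $h^{\prime-}(x)\le h^{\prime+}(x)\le h^{\prime-}(y)$ for $x<y$, and (iii) the chain-rule identity $h^{\prime\pm}=\left[W_q'\right]^{\prime\pm}/W_q'$, valid because $t\mapsto\log t$ is $C^1$ with positive derivative so composition preserves one-sided differentiability and the formula. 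Everything else is the routine squeeze argument above; no heavy machinery beyond \eqref{s.f.l.} and elementary convexity is needed.
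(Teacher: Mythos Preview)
Your proof is correct and takes a genuinely different, more direct route than the paper's. The paper first records that $\frac{[W_q']^{\prime\pm}(x)}{W_q'(x)}$ is nondecreasing (as you do), then identifies the limit by an indirect device: it introduces $F(x)=1/W_q(1/x)$ and $G(x)=1/W_q'(1/x)$, extends them continuously to $0$, and applies a generalized Cauchy mean value theorem (Lemma~\ref{lem001plus}) to the quotient $G/F$ near $0$, combined with the limit $W_q'/W_q\to\Phi_q$ from \eqref{s.f.l.}, to squeeze the limit of $[W_q']^{\prime\pm}/W_q'$ between $\Phi_q$ and $\Phi_q$. In contrast, you work with $h=\log W_q'$ directly: the convex slope sandwich $h^{\prime+}(x)\le \delta^{-1}(h(x+\delta)-h(x))\le h^{\prime-}(x+\delta)$, together with the factorization $W_q'(x+\delta)/W_q'(x)=(W_q'(x+\delta)/W_q(x+\delta))(W_q(x+\delta)/W_q(x))(W_q(x)/W_q'(x))\to\Phi_q\cdot e^{\Phi_q\delta}\cdot\Phi_q^{-1}$ from both parts of \eqref{s.f.l.}, gives the middle term $\to\Phi_q$; monotonicity of $h^{\prime\pm}$ and $h^{\prime-}\le h^{\prime+}$ then finish the squeeze. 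Your argument is shorter, avoids the auxiliary Lemma~\ref{lem001plus} and the reciprocal substitution entirely, and makes transparent why both asymptotics in \eqref{s.f.l.} enter. The paper's approach, on the other hand, packages the limit identification into a single mean-value step once Lemma~\ref{lem001plus} is available, which may be reusable elsewhere; but for this lemma your approach is the cleaner one.
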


\begin{proof}
By its convexity, $W_{q}^{\prime}(x)$ has right-hand and left-hand derivatives over $(0,\infty)$ with $\left[W_{q}^{\prime}\right]^{\prime\pm}(x)$ being non-decreasing, and
\begin{eqnarray}\label{dens.W'}
W_{q}^{\prime}(x)=W_{q}^{\prime}(x_{0})+\int_{x_{0}}^{x}[W_{q}^{\prime}]^{\prime\pm}(y)\,\mathrm{d}y,
\quad x\in[x_{0},\infty), \,\,x_{0}\in(0,\infty),
\end{eqnarray}
which together with $\lim_{x\rightarrow\infty}W_{q}(x)=\infty$ and \eqref{s.f.l.} implies the existence of a positive $x_{0}$ such that
$[W_{q}^{\prime}]^{\prime\pm}(x)>0$ for all $x\geq x_{0}$.
The log-convexity of $W_{q}^{\prime}(x)$ on $(0,\infty)$ implies that $[W_{q}^{\prime}]^{\prime\pm}(x)/W_{q}^{\prime}(x)$ is non-decreasing on $(0,\infty)$. It follows, together with the fact that
$W_{q}^{\prime}(x)$ is differentiable on $(0,\infty)$ except for countably many points, that
$[W_{q}^{\prime}]^{\prime-}(x)/W_{q}^{\prime}(x)
\leq [W_{q}^{\prime}]^{\prime+}(y)/W_{q}^{\prime}(y)
\leq
[W_{q}^{\prime}]^{\prime-}(z)/W_{q}^{\prime}(z)$ for $0<x<y<z.$
This inequality and the fact that $[W_{q}^{\prime}]^{\prime\pm}(x)>0$ for large $x$ lead to
\begin{align}\label{3.27rrl}
\lim\limits_{x\rightarrow\infty}[W_{q}^{\prime}]^{\prime-}(x)/W_{q}^{\prime}(x)
=\lim\limits_{x\rightarrow\infty}[W_{q}^{\prime}]^{\prime+}(x)/W_{q}^{\prime}(x)
\in (0,\infty].
\end{align}
Define two functions $F(x):=\frac{1}{W_{q}(1/x)}$ and $G(x):=\frac{1}{W_{q}^{\prime}(1/x)}$ for $x\in(0,\infty)$. In addition, let $F(0)=\lim\limits_{x\downarrow0}F(x)=0$ and $G(0)=\lim\limits_{x\downarrow0}G(x)=0$. Then  $F(x)$ is continuous over $[0,1]$, differentiable over $(0,1)$, strictly increasing, and  $F^{\prime}(x)>0$ for all $x>0$. Moreover, $G(x)$ is continuous over $[0,1]$, and has right-hand and left-hand derivatives on $(0,1)$.
By \eqref{s.f.l.}, Lemma \ref{lem001plus} and \eqref{3.27rrl}, we have that
\begin{align}
\frac{1}{\Phi_{q}}
=&\lim\limits_{x\downarrow 0}\frac{G(x)}{F(x)}=\lim\limits_{x\downarrow 0}\frac{G(x)-G(0)}{F(x)-F(0)}
\nonumber\\
\geq&\lim\limits_{x\downarrow 0}\frac{\min\{G^{\prime-}(\zeta),G^{\prime+}(\zeta)\}}{F^{\prime}(\zeta)}
= \lim\limits_{x\downarrow0}\frac{\min\{[W_{q}^{\prime}]^{\prime+}(1/\zeta),[W_{q}^{\prime}]^{\prime-}(1/\zeta)\}}{W_{q}^{\prime}(1/\zeta)} [\frac{W_{q}(1/\zeta)}{W_{q}^{\prime}(1/\zeta)}]^{2}
\nonumber\\
=&
\Phi_{q}^{-2}
\lim\limits_{x\rightarrow\infty}[W_{q}^{\prime}]^{\prime\pm}(x)/W_{q}^{\prime}(x),\nonumber
\end{align}
as well as the reverse inequality.
This, together with the non-decreasing property of $[W_{q}^{\prime}]^{\prime\pm}(x)/W_{q}^{\prime}(x)$, implies the desired result.
\end{proof}

The following Lemma \ref{thm4-2} gives the monotonicity of the function $d\mapsto\ell^{(q,\lambda)\prime}_{c}(d)$ on $[d^{*},\infty)$,  which plays an important role in solving our new dividend control problem under Chapter 11 bankruptcy. Before presenting this result, we recall that the tail of the L\'evy measure $\upsilon$ refers to the function $x\mapsto \upsilon(x,\infty)$ for $x\in(0,\infty)$.

\begin{lem}\label{thm4-2}
If the tail of the L\'evy measure $\upsilon$ is log-convex,
then the function $d\mapsto\ell^{(q,\lambda)\prime}_{c}(d)$ for $d\in[c,\infty)$ is increasing on $[d^{*},\infty)$.
\end{lem}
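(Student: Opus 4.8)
The plan is to analyze the structure of $\ell_{c}^{(q,\lambda)\prime}(d)$ directly from its definition in \eqref{ell.def.} and show that the log-convexity of the tail of $\upsilon$ forces $d\mapsto\ell_{c}^{(q,\lambda)\prime}(d)$ to be eventually increasing, with $d^{*}$ as the threshold. Differentiating \eqref{ell.def.} in $x$ and recalling that $\partial_{x}Z_{q}(x,\theta)=\theta Z_{q}(x,\theta)-(\psi(\theta)-q)W_{q}(x)$, we get for $d\geq c$ (where the $W_{q}(d)$ terms are active) an expression of the form
\[
\ell_{c}^{(q,\lambda)\prime}(d)=W_{q}^{\prime}(d)\,\alpha_{c}+\beta_{c}\,W_{q}(d),
\]
where $\alpha_{c}=1-\mathrm{e}^{-\widetilde{\Phi}_{q+\lambda}c}Z_{q}(c,\widetilde{\Phi}_{q+\lambda})$ and $\beta_{c}=\mathrm{e}^{-\widetilde{\Phi}_{q+\lambda}c}W_{q}(c)\big(\widetilde{\Phi}_{q+\lambda}Z_{q}(c,\widetilde{\Phi}_{q+\lambda})-(\psi(\widetilde{\Phi}_{q+\lambda})-q)W_{q}(c)\big)/W_{q}(c)$, i.e. $\beta_{c}$ reduces to $\mathrm{e}^{-\widetilde{\Phi}_{q+\lambda}c}\widetilde{\Phi}_{q+\lambda}W_{q}(c)Z_{q}(c,\widetilde{\Phi}_{q+\lambda})-\mathrm{e}^{-\widetilde{\Phi}_{q+\lambda}c}(\psi(\widetilde{\Phi}_{q+\lambda})-q)W_{q}(c)^{2}$; the precise constants are routine and I would not grind through them, the point being only that $\ell_{c}^{(q,\lambda)\prime}(d)$ is a fixed nonnegative linear combination of $W_{q}^{\prime}(d)$ and $W_{q}(d)$, with $\alpha_{c}>0$ by Remark \ref{rem01} (monotonicity of $\ell_{c}^{(q,\lambda)}$ on $(0,\infty)$ forces the leading coefficient to be positive).

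First I would record that, under the log-convexity of the tail of $\upsilon$, the scale function $W_{q}$ is sufficiently regular (by the $C^{n+1}$ statement cited from \cite{KKR12} in the preliminaries, together with Theorem 2 of \cite{Loe2009a}) and, more importantly, $W_{q}^{\prime}$ is itself log-convex: this is a known consequence of the tail of $\upsilon$ being log-convex (one passes through the completely-monotone/log-convexity machinery for scale functions). With $W_{q}^{\prime}$ log-convex in hand, Lemma \ref{lem002plus} applies and gives $[W_{q}^{\prime}]^{\prime\pm}(x)/W_{q}^{\prime}(x)\uparrow\Phi_{q}$ as $x\to\infty$. Then I would differentiate the displayed linear combination once more (in the left/right derivative sense), obtaining
\[
\ell_{c}^{(q,\lambda)\prime\prime\pm}(d)=\alpha_{c}\,[W_{q}^{\prime}]^{\prime\pm}(d)+\beta_{c}\,W_{q}^{\prime}(d)
= W_{q}^{\prime}(d)\Big(\alpha_{c}\,\frac{[W_{q}^{\prime}]^{\prime\pm}(d)}{W_{q}^{\prime}(d)}+\beta_{c}\Big).
\]
Since $W_{q}^{\prime}(d)>0$ and $\alpha_{c}>0$, the sign of $\ell_{c}^{(q,\lambda)\prime\prime\pm}(d)$ is the sign of $\alpha_{c}\,[W_{q}^{\prime}]^{\prime\pm}(d)/W_{q}^{\prime}(d)+\beta_{c}$, and by Lemma \ref{lem002plus} the quantity $[W_{q}^{\prime}]^{\prime\pm}(d)/W_{q}^{\prime}(d)$ is non-decreasing in $d$. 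Hence $\ell_{c}^{(q,\lambda)\prime\prime\pm}(d)$ changes sign at most once on $[c,\infty)$, from nonpositive to nonnegative: that is, $\ell_{c}^{(q,\lambda)\prime}$ is first non-increasing and then non-decreasing on $[c,\infty)$ (one of the two phases may be empty). By the definition \eqref{def.d*}, $d^{*}$ is precisely the last point of the minimum of $\ell_{c}^{(q,\lambda)\prime}$ on $[c,\infty)$, so $d^{*}$ is exactly the changeover point and $\ell_{c}^{(q,\lambda)\prime}$ is increasing on $[d^{*},\infty)$, which is the assertion. (If $d^{*}=-\infty$, i.e. $\ell_{c}^{(q,\lambda)\prime}$ has no minimum on $[c,\infty)$ because it is strictly decreasing throughout, the statement is vacuous; otherwise the above gives the claim. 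One should also treat the edge case in which $\alpha_{c}\,[W_{q}^{\prime}]^{\prime\pm}/W_{q}^{\prime}+\beta_{c}$ is identically nonnegative, in which case $d^{*}=c$ and there is nothing more to prove.)

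The main obstacle I anticipate is twofold. The first and genuine difficulty is justifying that log-convexity of the tail $x\mapsto\upsilon(x,\infty)$ transfers to log-convexity of $W_{q}^{\prime}$; this is the crux that makes Lemma \ref{lem002plus} usable, and it requires invoking the appropriate structural results for scale functions (the representations in terms of the renewal measure / completely monotone densities, as in \cite{KKR12,Loe2009a,Loe2008}) rather than a one-line computation. The second, more bookkeeping-level, obstacle is that $W_{q}^{\prime}$ need not be differentiable everywhere, so the second-derivative argument must be phrased in terms of one-sided derivatives and the monotone non-decreasing property of $[W_{q}^{\prime}]^{\prime\pm}/W_{q}^{\prime}$ rather than a clean $C^{2}$ calculus argument; Lemma \ref{lem001plus} and the non-decreasing ratio from Lemma \ref{lem002plus} are the tools that make this rigorous, and one must be careful that "at most one sign change" of the one-sided second derivative still yields the desired monotonicity of the first derivative (which it does, via the integral representation \eqref{dens.W'} applied to $\ell_{c}^{(q,\lambda)\prime}$).
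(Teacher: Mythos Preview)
Your decomposition $\ell_{c}^{(q,\lambda)\prime}(d)=\alpha_{c}\,W_{q}^{\prime}(d)+\beta_{c}\,W_{q}(d)$ is incorrect, and this is where the argument breaks down. Differentiating \eqref{ell.def.} gives
\[
\ell_{c}^{(q,\lambda)\prime}(d)=\alpha_{c}\,W_{q}^{\prime}(d)+\mathrm{e}^{-\widetilde{\Phi}_{q+\lambda}c}W_{q}(c)\,Z_{q}^{\prime}(d,\widetilde{\Phi}_{q+\lambda}),
\]
and while you correctly quote $Z_{q}^{\prime}(d,\theta)=\theta Z_{q}(d,\theta)-(\psi(\theta)-q)W_{q}(d)$, the term $\theta Z_{q}(d,\theta)$ is \emph{not} a constant multiple of $W_{q}(d)$; it depends on $d$ through $Z_{q}(d,\widetilde{\Phi}_{q+\lambda})$, which is a genuinely different function. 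Your constants $\beta_{c}$ are written with $Z_{q}(c,\widetilde{\Phi}_{q+\lambda})$ where $Z_{q}(d,\widetilde{\Phi}_{q+\lambda})$ should appear, so the ``fixed linear combination'' structure you rely on simply does not hold. Consequently the single sign-change argument for $\alpha_{c}[W_{q}^{\prime}]^{\prime\pm}/W_{q}^{\prime}+\beta_{c}$ is not applicable.

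A second issue: your claim $\alpha_{c}>0$ is false in general. From the second line of \eqref{ell.def.} one has $\alpha_{c}=(\psi(\widetilde{\Phi}_{q+\lambda})-q)\int_{0}^{c}\mathrm{e}^{-\widetilde{\Phi}_{q+\lambda}w}W_{q}(w)\,\mathrm{d}w$, so $\alpha_{c}$ has the sign of $\psi(\widetilde{\Phi}_{q+\lambda})-q$, which can be negative. Remark \ref{rem01} only gives monotonicity of $\ell_{c}^{(q,\lambda)}$ itself, not positivity of individual coefficients. The paper handles this by splitting into three cases according to the sign of $\psi(\widetilde{\Phi}_{q+\lambda})-q$. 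When it is positive, one shows that $Z_{q}^{\prime}(\cdot,\widetilde{\Phi}_{q+\lambda})$ is itself log-convex (via the representation $Z_{q}^{\prime}(x,\widetilde{\Phi}_{q+\lambda})=(\psi(\widetilde{\Phi}_{q+\lambda})-q)\int_{0}^{\infty}\mathrm{e}^{-\widetilde{\Phi}_{q+\lambda}y}W_{q}^{\prime}(x+y)\,\mathrm{d}y$), so $\ell_{c}^{(q,\lambda)\prime}$ is a positive combination of log-convex functions and hence convex. When $\psi(\widetilde{\Phi}_{q+\lambda})-q<0$ the argument is different: one rewrites $[\ell_{c}^{(q,\lambda)\prime}]^{\prime\pm}$ explicitly and uses Lemma \ref{lem002plus} together with an auxiliary inequality to show strict positivity of the derivative on all of $[c,\infty)$, whence $d^{*}=c$. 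Your instinct to use Lemma \ref{lem002plus} is right, but it enters only in this second case and through a more delicate identity than the one you wrote down.
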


\begin{proof}
It follows from \eqref{ell.def.} that
\begin{align*}
&\ell^{(q,\lambda)\prime}_{c}(d)
=
W_{q}^{\prime}(d)[\psi(\widetilde{\Phi}_{q+\lambda})-q]\int_{0}^{c}\mathrm{e}^{-\widetilde{\Phi}_{q+\lambda} w}W_{q}(w)\mathrm{d}w+
\mathrm{e}^{-\widetilde{\Phi}_{q+\lambda}c}W_{q}(c)Z_{q}^{\prime}(d,\widetilde{\Phi}_{q+\lambda}).\nonumber
\end{align*}
We consider the following cases  separately.
\begin{itemize}
\item[(i)]  $\psi(\widetilde{\Phi}_{q+\lambda})-q>0$ (or, equivalently, $\widetilde{\Phi}_{q+\lambda}>\Phi_{q}$).

By Theorem 1.2 of \cite{LoeRe2010}, the function $W_{q}^{\prime}(x)$ is log-convex over $(0,\infty)$ when the tail of the L\'evy measure is log-convex. In view that $\widetilde{\Phi}_{q+\lambda}>\Phi_{q}$, and properties of scale functions in Section 2 of \cite{ALIZ2016} or Section 3.1 of \cite{Re19}, one has 
$$Z_{q}^{\prime}(x,\widetilde{\Phi}_{q+\lambda})=
[\psi(\widetilde{\Phi}_{q+\lambda})-q]\int_{0}^{\infty}
\mathrm{e}^{-\widetilde{\Phi}_{q+\lambda}y}W_{q}^{\prime}(x+y)\mathrm{d}y,\quad x\geq 0,$$
From Exercise 9 in Section 2.2 of \cite{Cons2010} (see also \cite{Artin1964}), it follows that the function $x\mapsto Z_{q}^{\prime}(x,\widetilde{\Phi}_{q+\lambda})$ is log-convex. Using Exercise 4 in Section 2.1 of \cite{Cons2010},
one deduces that the function $\ell^{(q,\lambda)\prime}_{c}(d)$ is log-convex on its domain $[c,\infty)$, and hence $\ell^{(q,\lambda)\prime}_{c}(d)$ is convex on $[c,\infty)$ as log-convexity implies convexity. This result and the definition of $d^{*}$ can guarantee that $\ell^{(q,\lambda)\prime}_{c}(d)$ is non-decreasing in $d$ on $[d^{*},\infty)$, as desired.
\item[(ii)] $\psi(\widetilde{\Phi}_{q+\lambda})-q<0$.

Using \eqref{Zqtheta} of $Z_{q}(x,\widetilde{\Phi}_{q+\lambda})$, the function $\ell^{(q,\lambda)\prime}_{c}(d)$ can be rewritten as
\begin{align}
\label{4.9.rev.}
&\ell^{(q,\lambda)\prime}_{c}(d)=
W_{q}^{\prime}(d)\big(\psi(\widetilde{\Phi}_{q+\lambda})-q\big)\int_{0}^{c}\mathrm{e}^{-\widetilde{\Phi}_{q+\lambda} w}W_{q}(w)\mathrm{d}w
\nonumber\\
&\quad+
\widetilde{\Phi}_{q+\lambda}
W_{q}(c)
\mathrm{e}^{\widetilde{\Phi}_{q+\lambda} (d-c)}\Big[1-\big(\psi(\widetilde{\Phi}_{q+\lambda})-q\big)\int_{0}^{d}\mathrm{e}^{-\widetilde{\Phi}_{q+\lambda} w}W_{q}(w)\mathrm{d}w\Big]
\nonumber\\
&\quad-[\psi(\widetilde{\Phi}_{q+\lambda})-q]\mathrm{e}^{-\widetilde{\Phi}_{q+\lambda}c}W_{q}(c)W_{q}(d).
\end{align}
Taking right-hand and left-hand derivatives on both sides of \eqref{4.9.rev.} with respect to $d$ gives that
\begin{align}
\label{g'}
&[\ell^{(q,\lambda)\prime}_{c}]^{\prime\pm}(d)=
\widetilde{\Phi}_{q+\lambda}\,\ell^{(q,\lambda)\prime}_{c}(d)-
\big(\psi(\widetilde{\Phi}_{q+\lambda})-q\big)W_{q}^{\prime}(d)\big[W_{q}(0)\nonumber\\
&\quad +\int_{0}^{c}\mathrm{e}^{-\widetilde{\Phi}_{q+\lambda} w}W_{q}^{\prime}(w)\mathrm{d}w-
\int_{0}^{c}\mathrm{e}^{-\widetilde{\Phi}_{q+\lambda} w}W_{q}(w)\mathrm{d}w\,[W_{q}^{\prime}]^{\prime\pm}(d)/W_{q}^{\prime}(d)
\big].
\end{align}
In addition, by \eqref{c.s.newmeas.} and integration by parts, it is also straightforward to verify that
\begin{align}\label{ell/W.1}
&W_{q}(0)+\int_{0}^{c}\mathrm{e}^{-\widetilde{\Phi}_{q+\lambda} w}W_{q}^{\prime}(w)\mathrm{d}w
-
\Phi_{q}\int_{0}^{c}\mathrm{e}^{-\widetilde{\Phi}_{q+\lambda} w}W_{q}(w)\mathrm{d}w
\nonumber\\
\geq&
[\Phi_{q}-\widetilde{\Phi}_{q+\lambda}]\Big[\frac{\mathrm{e}^{-\widetilde{\Phi}_{q+\lambda}c}W_{q}(c)}{\Phi_{q}-\widetilde{\Phi}_{q+\lambda}}
-W^{\Phi_{q}}(c)\int_{0}^{c}\mathrm{e}^{-\widetilde{\Phi}_{q+\lambda} w}
\mathrm{e}^{\Phi_{q} w}
\mathrm{d}w\Big]
\nonumber\\
=&
W^{\Phi_{q}}(c)>0,\quad \widetilde{\Phi}_{q+\lambda}<\Phi_{q}.
\end{align}
Hence, in view of \eqref{g'}, \eqref{ell/W.1}, as well as the facts of $[W_{q}^{\prime}]^{\prime\pm}(d)/W_{q}^{\prime}(d)
\uparrow\Phi_{q}$ as $d\uparrow\infty$ (see, Lemma \ref{lem002plus}) and $\ell^{(q,\lambda)\prime}_{c}(d)\geq 0$ on $[c,\infty)$ (see, Remark \ref{rem01}), we deduce that $\ell^{(q,\lambda)\prime}_{c}(d)$ is strictly increasing on $[c,\infty)$.
It follows that $d^{*}=c$ and $\ell^{(q,\lambda)\prime}_{c}(d)$ is strictly increasing in $d$ on $[d^{*},\infty)$.

\item[(iii)] $\psi(\widetilde{\Phi}_{q+\lambda})-q=0$.

Using \eqref{4.9.rev.} again, we derive that
$\ell^{(q,\lambda)\prime}_{c}(d)=
\widetilde{\Phi}_{q+\lambda}\mathrm{e}^{-\widetilde{\Phi}_{q+\lambda} c}W_{q}(c)
\mathrm{e}^{\widetilde{\Phi}_{q+\lambda} d}$, which is strictly increasing on $\left[c,\infty\right)$.
\end{itemize}

Putting all the pieces together, we conclude that $d^{*}\geq c$ and the function $d\mapsto\ell^{(q,\lambda)\prime}_{c}(d)$ is increasing on $[d^{*},\infty)$ in the case $\psi(\widetilde{\Phi}_{q+\lambda})-q>0$; and $d^{*}=c$ and the function $d\mapsto\ell^{(q,\lambda)\prime}_{c}(d)$ is increasing on $[d^{*},\infty)$ in the case $\psi(\widetilde{\Phi}_{q+\lambda})-q\leq 0$.
\end{proof}

\begin{rem}
It is important to note that, in Lemma \ref{thm4-2} and subsequent Lemmas \ref{lem3.4} and \ref{lem4-3}, and Theorems \ref{Lem4-1} and \ref{HJB4},  the log-convexity assumption is imposed only on the L\'evy measure $\upsilon$ of the spectrally negative L\'evy process $X$ (rather than on that of the spectrally negative L\'evy process $\widetilde{X}$). This is because the function $\ell^{(q,\lambda)}_{c}(x)$, which turns out to be the elementary object of study throughout this paper,
depends on $\widetilde{X}$ only through the single parameter $\widetilde{\Phi}_{q+\lambda}$, which allows us to use the log-convexity results from the literature to prove Lemma \ref{thm4-2}. For further discussions, see the up-coming Remark \ref{3.2.amo}.
\end{rem}

Define an operator $\mathcal{A}$ on the functional space $\mathcal{D}$ by
\begin{align}\label{operatorHJB}
\mathcal{A}f(x):=\gamma f^{\prime}(x)+\frac{\sigma^{2}}{2}f^{\prime\prime}(x)
+\int_{0}^{\infty}\big[f(x-z)-f(x)+f^{\prime}(x) z\mathbf{1}_{\{z\leq 1\}}\big]\upsilon(\mathrm{d}z),
\end{align}for $x\in\mathbb{R}$ and $f\in\mathcal{D}$. When $\sigma\in(0,\infty)$, $\mathcal{D}$ is the set of twice differentiable functions such that the integral on the right hand side of \eqref{operatorHJB} is finite;  when $\sigma=0$, $\mathcal{D}$ is the set of differentiable functions such that the integral on the right hand side of \eqref{operatorHJB} is finite.
Let $\widetilde{\mathcal{A}}$ be the operator similar to $\mathcal{A}$ where the L\'evy triplet $(\gamma,\sigma,\upsilon)$ of $X$ is replaced by $(\widetilde{\gamma},\widetilde{\sigma},\widetilde{\upsilon})$ of $\widetilde{X}$.

For fixed $y\in[c,\infty)$, we extend the definitions of $\mathcal{V}_{1}(x)$ and $\widetilde{\mathcal{V}}_{1}(x)$ with $d=y$ in \eqref{V1.gen.} and \eqref{V1.gen.001} and denote by $V_{y}(x)$
and $\widetilde{V}_{y}(x)$
the extended functions that
{\small\begin{align}\label{V_{y}(x)}
 V_{y}(x)&:=\frac
{\mathrm{e}^{\widetilde{\Phi}_{q+\lambda}(x-c)}W_{q}(c)\mathbf{1}_{\{x\leq0\}}}
{\ell^{(q,\lambda)\prime}_{c}(y)}+
\frac
{\ell^{(q,\lambda)}_{c}(x)\mathbf{1}_{\{0<x\leq y\}}}
{\ell^{(q,\lambda)\prime}_{c}(y)}
+\Big[\frac
{\ell^{(q,\lambda)}_{c}(y)}
{\ell^{(q,\lambda)\prime}_{c}(y)}+x-y\Big]\mathbf{1}_{\{x>y\}},
\\
\label{tildeV_{y}(x)}
\widetilde{V}_{y}(x)&:=
\frac{\mathrm{e}^{\widetilde{\Phi}_{q+\lambda}(x-c)}W_{q}(c)\mathbf{1}_{\{x\leq c\}}}{\ell_{c}^{(q,\lambda)\prime}(y)}
+
\frac
{\ell^{(q,\lambda)}_{c}(x)\mathbf{1}_{\{c<x\leq y\}}}
{\ell^{(q,\lambda)\prime}_{c}(y)}
+\Big[\frac
{\ell^{(q,\lambda)}_{c}(y)}
{\ell^{(q,\lambda)\prime}_{c}(y)}+x-y\Big]\mathbf{1}_{\{x>y\}}.
\end{align}}If the tail of the L\'evy measure $\upsilon$ is log-convex, $W_{q}(x)\in C^{1}(0,\infty)$ and $Z_{q}(x,\widetilde{\Phi}_{q+\lambda})\in C^{2}(0,\infty)$, and hence by \eqref{V_{y}(x)}, $V_{y}(x)$ is non-negative and $V_{y}(x)\in C^{2}(-\infty,0)\cap C^{1}(0,\infty)\cap C^{2}(y,\infty)$. 
By \eqref{tildeV_{y}(x)}, $\widetilde{V}_{y}$ is non-negative, increasing and twice continuously differentiable over $(-\infty,c)$.


\begin{lem}\label{lem3.4}
Suppose that the tail of the L\'evy measure $\upsilon$ is log-convex. The integral in \eqref{operatorHJB} with $f=V_{y}$
is finite for all $x\in(0,\infty)$ and is continuous for $x\in(0,y)$.
\end{lem}

\begin{proof}
We follow an argument similar to that of Lemma 4.1 in \cite{KypRS2010}. For $x\in(0,y)$, by \eqref{ell.def.} and \eqref{V_{y}(x)}, the integral in \eqref{operatorHJB} with $f=V_{y}$ can be decomposed as
{\small\begin{align}\label{Int.s}
\quad\quad\quad &
\frac{1-\mathrm{e}^{-\widetilde{\Phi}_{q+\lambda}c}Z_{q}(c,\widetilde{\Phi}_{q+\lambda})}
{\ell_{c}^{(q,\lambda)\prime}(y)}
%
\bigg[
\int_{0}^{\frac{(y-x)\wedge x}{2}\wedge 1}\big[W_{q}(x-z)-W_{q}(x)
+zW_{q}^{\prime}(x)\big]\upsilon(\mathrm{d}z)
\nonumber\\
&
+\int_{\frac{(y-x)\wedge x}{2}\wedge1}^{x}\big[W_{q}(x-z)-W_{q}(x)
+zW_{q}^{\prime}(x)\mathbf{1}_{\{z\leq 1\}}\big]\upsilon(\mathrm{d}z)
\bigg]
\nonumber\\
&
+\frac{\mathrm{e}^{-\widetilde{\Phi}_{q+\lambda}c}W_{q}(c)}{\ell_{c}^{(q,\lambda)\prime}(y)}
\bigg[
\int_{0}^{\frac{(y-x)\wedge x}{2}\wedge 1}\hspace{-0.2cm}[Z_{q}(x-z,\widetilde{\Phi}_{q+\lambda})-Z_{q}(x,\widetilde{\Phi}_{q+\lambda})
+zZ_{q}^{\prime}(x,\widetilde{\Phi}_{q+\lambda})]\upsilon(\mathrm{d}z)
\nonumber\\
&
+\int_{\frac{(y-x)\wedge x}{2}\wedge1}^{x}\big[Z_{q}(x-z,\widetilde{\Phi}_{q+\lambda})-Z_{q}(x,\widetilde{\Phi}_{q+\lambda})
+zZ_{q}^{\prime}(x,\widetilde{\Phi}_{q+\lambda})\mathbf{1}_{\{z\leq 1\}}\big]\upsilon(\mathrm{d}z)
\bigg]
\nonumber\\
&
+\frac{1}{\ell_{c}^{(q,\lambda)\prime}(y)}
\int_{x}^{\infty}\big[\mathrm{e}^{\widetilde{\Phi}_{q+\lambda}(x-z-c)}W_{q}(c)
-\ell_{c}^{(q,\lambda)}(x)
+z\ell_{c}^{(q,\lambda)\prime}(x)\mathbf{1}_{\{z\leq 1\}}\big]\upsilon(\mathrm{d}z).
\end{align}}When the tail of the L\'evy measure is log-convex, $W_{q}^{\prime}(x)$ is continuous and log-convex on $(0,\infty)$. It follows that \eqref{dens.W'} holds and $Z_{q}(x,\widetilde{\Phi}_{q+\lambda})\in C^{2}(0,\infty)$. As a result, by mean value theorem, we can obtain five upper bounds
for the five integrands in \eqref{Int.s}. 
Combining these bounds and the fact of $\int_{0}^{\infty}(1\wedge z^{2})\upsilon(\mathrm{d}z)<\infty$, we get that all five integrals in \eqref{Int.s} are finite. Using the dominated convergence theorem and the same upper bounds, one can further conclude that all five integrals in \eqref{Int.s} are continuous in $x\in(0,y)$. In addition, it is straightforward to show that the integral is finite for $x\in[y,\infty)$, which completes the proof.
\end{proof}

The next result gives the PDE characterization of the optimal value function.

\begin{thm}
[Verification Theorem]\label{Lem4-1}
Suppose that the tail of the L\'evy measure $\upsilon$ is log-convex. Assume that $V_{y}$ and $\widetilde{V}_{y}$ are non-decreasing functions such that
\begin{align}\label{hjbine4}
\left\{
\begin{aligned}
&(\mathcal{A}-q)V_{y}(x)\leq 0, & \quad x\in \left(0, \infty\right), \\
& (\widetilde{\mathcal{A}}-q-\lambda)\widetilde{V}_{y}(x)\leq 0,& \quad x\in\left(-\infty,c\right),
\\
& V_{y}^{\prime}(x)\geq 1,& \quad x\in\left(c,\infty\right).
\end{aligned}
\right.
\end{align}
Then, we have  
$V_{y}(x)\geq V_{D}(x)$ for $x\in\left[0,\infty\right)$, and $\widetilde{V}_{y}(x)\geq \widetilde{V}_{D}(x)$ for $x\in\left(-\infty,c\right)$ for any admissible singular dividend strategy $D\in \mathcal{D}$.\end{thm}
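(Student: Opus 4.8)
The plan is a ``guess-and-verify'' argument in the spirit of \cite{AvPP2007,Loe2008,Loe2009}, carried out along the regime-switching construction of Definition \ref{def001} and with the Parisian grace periods integrated out. Fix $y\in[c,\infty)$, an admissible $D\in\mathcal{D}$, and the associated pair $(U,I)$. The first step is to replace the bankruptcy time $T_{D}$ by the $J$-discounting of \eqref{SJ}: conditioning on $(X,\widetilde{X},D)$, which determines the switching times $\{\mathcal{T}_{k}\}$, and using the independence of $\{e_{\lambda}^{k}\}_{k\geq1}$, one gets $\mathbb{E}\big[\mathbf{1}_{\{t<T_{D}\}}\,\big|\,(X,\widetilde{X},D)\big]=\exp\!\big(-\lambda\int_{0}^{t}\mathbf{1}_{\overline{\mathbb{S}}}(s)\,\mathrm{d}s\big)$, so that $\mathrm{e}^{-qt}\mathbf{1}_{\{t<T_{D}\}}$ averages to $\mathrm{e}^{-J(t)}$. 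Hence $V_{D}(x)=\mathbb{E}_{x,0}\big[\int_{0}^{\infty}\mathrm{e}^{-J(s)}\,\mathrm{d}\widehat{D}(s)\big]$ and $\widetilde{V}_{D}(x)=\mathbb{E}_{x,1}\big[\int_{0}^{\infty}\mathrm{e}^{-J(s)}\,\mathrm{d}\widehat{D}(s)\big]$, where $\widehat{D}$ denotes the cumulative \emph{effective} dividend, i.e. the part of $D$ actually paid out (charged only while the surplus is at or above $c$); this is precisely the functional that the per-interval It\^{o} decompositions of Lemma \ref{segm.exp.} are designed to control.

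Second, set $H(x,0):=V_{y}(x)$ and $H(x,1):=\widetilde{V}_{y}(x)$ and examine $\mathrm{e}^{-J(t)}H(U(t),I(t))$. Localising with $\zeta_{n}:=\inf\{t\geq0:|U(t)|\geq n\}$ together with the per-interval exit times introduced before Lemma \ref{segm.exp.}, one applies \eqref{4.18.prel.0} on each solvency interval $[\mathcal{T}_{k},\mathcal{T}_{k+1})$ and \eqref{4.18.prel.00} on each insolvency interval. On the right-hand sides: the drift terms are non-positive by the HJB inequalities \eqref{hjbine4}, namely $(\mathcal{A}-q)V_{y}\leq0$ on solvency pieces and $(\widetilde{\mathcal{A}}-q-\lambda)\widetilde{V}_{y}\leq0$ on insolvency pieces (Lemma \ref{lem3.4} guarantees that the compensated-jump integrands are finite and locally bounded, so the decomposition is legitimate); the stochastic integrals against $B,\widetilde{B},\overline{N},\overline{\widetilde{N}}$ have bounded integrands on the stopped intervals and vanish in expectation; and the dividend terms, which are supported on $\{U\geq c\}$ by the construction of $(U,I)$, obey $-V_{y}^{\prime}(U(s))\,\mathrm{d}C_{D}(s)\leq-\mathrm{d}C_{D}(s)$ and $V_{y}(U(s+))-V_{y}(U(s+)+\triangle D(s))+\triangle D(s)\leq0$, since $V_{y}^{\prime}\geq1$ on $(c,\infty)$ (mean value theorem, using $V_{y}\in C^{1}(0,\infty)$). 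No dividend is paid during insolvency, consistently with \eqref{def4.2}.

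Third, I would glue the pieces together. Since $\mathcal{T}_{2n}\to\infty$, only finitely many switches occur in any $[0,t]$. At a down-crossing of $b=0$ the state goes $0\to1$ with $U$ landing in $(-\infty,0]$, where $V_{y}$ and $\widetilde{V}_{y}$ agree (by $V_{y}=\widetilde{V}_{y}$ on $(-\infty,0)$, and $V_{y}(0)=\widetilde{V}_{y}(0)$ in the unbounded-variation creeping case), and at an up-crossing of $c$ the state goes $1\to0$ with $U$ landing at $c$, where $V_{y}(c)=\widetilde{V}_{y}(c)$; hence $\mathrm{e}^{-J(t)}H(U(t),I(t))$ has no jump across any $\mathcal{T}_{k}$ and the per-interval identities telescope. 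Taking expectations yields, on the stopped horizon,
\begin{align*}
H(x,i)\ \geq\ \mathbb{E}_{x,i}\Big[\int_{0}^{t\wedge\zeta_{n}}\mathrm{e}^{-J(s)}\,\mathrm{d}\widehat{D}(s)\Big]+\mathbb{E}_{x,i}\big[\mathrm{e}^{-J(t\wedge\zeta_{n})}H(U(t\wedge\zeta_{n}),I(t\wedge\zeta_{n}))\big],\qquad i\in\{0,1\}.
\end{align*}
Letting $n\to\infty$ (so $\zeta_{n}\uparrow\infty$, as $U$ is non-explosive) and then $t\to\infty$, monotone convergence makes the first expectation increase to $V_{D}(x)$ when $i=0$ and to $\widetilde{V}_{D}(x)$ when $i=1$ (via Step 1), while the last term is dropped because $V_{y},\widetilde{V}_{y}\geq0$. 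This gives $V_{y}(x)\geq V_{D}(x)$ for $x\in[0,\infty)$ and $\widetilde{V}_{y}(x)\geq\widetilde{V}_{D}(x)$ for $x\in(-\infty,c)$.

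The step I expect to be most delicate is the gluing of Step 3: one must verify that the ``reward-plus-continuation'' process $\mathrm{e}^{-J(t)}H(U(t),I(t))+\int_{0}^{t}\mathrm{e}^{-J(s)}\,\mathrm{d}\widehat{D}(s)$ is genuinely a supermartingale across \emph{every} regime-switching epoch $\mathcal{T}_{k}$ --- this is where the matching identities $V_{y}=\widetilde{V}_{y}$ on $(-\infty,0)\cup\{c\}$ (and the unbounded-variation matching at $0$) recorded after \eqref{tildeV_{y}(x)} are used --- and that the localisation built from the per-interval exit times, together with $\mathcal{T}_{2n}\to\infty$, really exhausts $[0,\infty)$, so that the passage to the limit is justified. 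The remaining estimates (finiteness of the compensated integrals from Lemma \ref{lem3.4}, the martingale property of the stochastic integrals on the stopped intervals, and the monotone convergence at the end) are routine.
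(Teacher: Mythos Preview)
Your outline matches the paper's proof: localise, apply the piecewise It\^{o} decompositions of Lemma~\ref{segm.exp.}, bound the drift by the HJB inequalities, kill the stochastic integrals in expectation, control the dividend increments via $V_{y}'\geq1$, and finish with the Poisson averaging of $T_{D}$. There is, however, one genuine subtlety in your Step~3 that you have not identified correctly.

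You claim the per-interval identities ``telescope cleanly'' across each $\mathcal{T}_{k}$ because $V_{y}=\widetilde{V}_{y}$ on $(-\infty,0)\cup\{c\}$. But Lemma~\ref{segm.exp.} on a solvency piece is only valid on $\{t<\zeta_{n}^{(2i+1)}\}$, i.e.\ while $U_{2i+1}\in(1/n,n)$; it does \emph{not} reach the actual down-crossing time $\mathcal{T}_{2i+1}-\mathcal{T}_{2i}$. Hence at finite~$n$ the solvency identity terminates at $\xi_{1/n}^{2i-}$, strictly before $\mathcal{T}_{2i+1}$, and there is no It\^{o} identity on the gap $[\xi_{1/n}^{2i-},\mathcal{T}_{2i+1})$ to link the two pieces. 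The paper closes this gap with the \emph{monotonicity} hypothesis on $V_{y}$ (which you never invoke): since $U(\mathcal{T}_{2i+1})\leq0<U(\xi_{1/n}^{2i-}-)$, $\xi_{1/n}^{2i-}\leq\mathcal{T}_{2i+1}$ and $V_{y}\geq0$ is non-decreasing,
\[
\mathrm{e}^{-J(\mathcal{T}_{2i+1})}\widetilde{V}_{y}\big(U(\mathcal{T}_{2i+1})\big)
=\mathrm{e}^{-J(\mathcal{T}_{2i+1})}V_{y}\big(U(\mathcal{T}_{2i+1})\big)
\leq \mathrm{e}^{-J(\xi_{1/n}^{2i-}-)}V_{y}\big(U(\xi_{1/n}^{2i-}-)\big),
\]
and the telescoping becomes an \emph{inequality} (this is exactly \eqref{4.13}--\eqref{4.14.01}). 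The matching $V_{y}=\widetilde{V}_{y}$ is only the first equality above; the bridge to the localised endpoint is the monotonicity. If instead you try to send $n\to\infty$ on each piece \emph{before} telescoping, you lose the boundedness of the integrands that makes the compensated-jump and Brownian integrals true martingales, and the argument collapses. This is why the theorem assumes $V_{y},\widetilde{V}_{y}$ non-decreasing; your write-up should use that hypothesis here.
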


\begin{proof}
Recall from Remark \ref{rem.2.1.vamo} that $\mathcal{T}_{n}\uparrow\infty$ almost surely under both $\mathbb{P}_{x,0}$ and $\mathbb{P}_{x,1}$ as $n\uparrow\infty$.
For a given dividend strategy $D\in\mathcal{D}$ and the resulting surplus process $U$ given by Definition \ref{def001}, we denote $(C_{D}(t))_{t\geq0}$ 
the continuous part of $(D(t))_{t\geq0}$. 
Let $\zeta_{n}:=\inf\{t\geq0; |U(t)|\geq n\}$ for $n\geq 1$, it holds that $\zeta_{n}\uparrow\infty$ a.s. as $n\rightarrow\infty$. In addition, $U(t-)$ is bounded in the compact set $\left[-n,n\right]$ for $t<\zeta_{n}$.
We define
\begin{align*}
&\mathbb{S}:= \cup_{k\geq0,I(\mathcal{T}_{k})=0}\left[\mathcal{T}_{k}, \mathcal{T}_{k+1}\right),\,
\overline{\mathbb{S}}:= \cup_{k\geq0,I(\mathcal{T}_{k})=1}\left[\mathcal{T}_{k}, \mathcal{T}_{k+1}\right)
, \,J(t):=qt+\lambda\int_{0}^{t}\mathbf{1}_{\overline{\mathbb{S}}}(s)\,\mathrm{d}s,\nonumber\\
&\mathcal{G}(U,I,\mathbb{S},\overline{\mathbb{S}})(s):=\left(\mathcal{A}-q\right)V_{y}(U(s))\,\mathbf{1}_{\mathbb{S}}(s)
+(\widetilde{\mathcal{A}}-q-\lambda)\widetilde{V}_{y}(U(s))\,\mathbf{1}_{\overline{\mathbb{S}}}(s),\nonumber
\end{align*}
where $\mathbb{S}$ (resp., $\overline{\mathbb{S}}$) represents the union of all solvency (resp., insolvency) time intervals.
Note that $V_{y}(x)=\widetilde{V}_{y}(x)$ for all $x\in\left(-\infty,0\right)\cup\{c\}$ and that $V_{y}(0)=\widetilde{V}_{y}(0)$ when $X$ has paths of unbounded variation. It follows that
\begin{itemize}
    \item if $I(0)=0$, for $i\geq1$ such that $\mathcal{T}_{2i-2}<\infty$, define $\zeta_{1/m}^{(2i-1)-}:=\inf\{t\geq \mathcal{T}_{2i-2}; U(t)\leq 1/m\}$, $\xi_{1/m,m}^{(2i-1)}:=\inf\{t\geq 0; U_{2i-1}(t)\geq m \text{ or }U_{2i-1}(t)\leq 1/m\}$ where $U_{2i-1}$ is defined in \eqref{def4.1} with $n=2i-2$, and
        $\xi_{-m,c}^{(2i)}:=\inf\{t\geq0; \widetilde{U}_{2i}(t)\geq c\text{ or }\widetilde{U}_{2i}(t)\leq -m\}$ where $\widetilde{U}_{2i}$ is defined in \eqref{def4.2} with $n=2i-1$. By the non-decreasing property of $V_{y}$ and $U(\mathcal{T}_{2i-1})\leq 0<U(\zeta_{1/m}^{(2i-1)-}-)$ 
        when $\mathcal{T}_{2i-1}<\infty$, we have
\begin{align}\label{4.13}
&\left[\mathrm{e}^{-J(\zeta_{m})}V_{y}\left(U(\zeta_{m})\right)-V_{y}(x)\right]
\mathbf{1}_{\left[\mathcal{T}_{2k},\mathcal{T}_{2k+1}\right)}(\zeta_{m})
%
\nonumber\\
=&
\mathbf{1}_{\left[\mathcal{T}_{2k},\mathcal{T}_{2k+1}\right)}(\zeta_{m})\Big[
\mathrm{e}^{-J(\zeta_{m})}V_{y}\left(U(\zeta_{m})\right)
-\mathrm{e}^{-J(\mathcal{T}_{2k})}V_{y}(U(\mathcal{T}_{2k}))\notag\\
&+
\sum_{i=1}^{k}\left[\mathrm{e}^{-J(\mathcal{T}_{2i})}\widetilde{V}_{y}(U(\mathcal{T}_{2i}))
-\mathrm{e}^{-J(\mathcal{T}_{2i-1})}\widetilde{V}_{y}(U(\mathcal{T}_{2i-1}))
\right]
\nonumber\\
&+
\sum_{i=1}^{k}\left[\mathrm{e}^{-J(\mathcal{T}_{2i-1})}V_{y}(U(\mathcal{T}_{2i-1}))
-\mathrm{e}^{-J(\mathcal{T}_{2i-2})}V_{y}(U(\mathcal{T}_{2i-2}))
\right]
\Big]
\nonumber\\
\leq&
\mathbf{1}_{\left[\mathcal{T}_{2k},\mathcal{T}_{2k+1}\right)}(\zeta_{m})\Big[
\mathrm{e}^{-J(\zeta_{m})}V_{y}\left(U(\zeta_{m})\right)
-\mathrm{e}^{-J(\mathcal{T}_{2k})}V_{y}(U(\mathcal{T}_{2k}))\notag\\
&+
\sum_{i=1}^{k}\left[\mathrm{e}^{-J(\mathcal{T}_{2i})}\widetilde{V}_{y}(U(\mathcal{T}_{2i}))
-\mathrm{e}^{-J(\mathcal{T}_{2i-1})}\widetilde{V}_{y}(U(\mathcal{T}_{2i-1}))
\right]
\nonumber\\
&+
\sum_{i=1}^{k}[\mathrm{e}^{-J(\zeta_{1/m}^{(2i-1)-}-)}V_{y}(U(\zeta_{1/m}^{(2i-1)-}-))
-\mathrm{e}^{-J(\mathcal{T}_{2i-2})}V_{y}(U(\mathcal{T}_{2i-2}))
]
\Big],
\end{align}
and
\begin{align}
\label{4.14}
&\left[\mathrm{e}^{-J(\zeta_{m})}\widetilde{V}_{y}\left(U(\zeta_{m})\right)-
V_{y}(x)\right]\mathbf{1}_{\left[\mathcal{T}_{2k+1},\mathcal{T}_{2k+2}\right)}(\zeta_{m})
\nonumber\\
\leq&
\mathbf{1}_{[\mathcal{T}_{2k+1},\mathcal{T}_{2k+2})}(\zeta_{m})\Big[\mathrm{e}^{-J(\zeta_{m})}
\widetilde{V}_{y}(U(\zeta_{m}))
-\mathrm{e}^{-J(\mathcal{T}_{2k+1})}\widetilde{V}_{y}(U(\mathcal{T}_{2k+1}))\notag\\
&+
\sum_{i=1}^{k+1}\left[
\mathrm{e}^{-J(\zeta_{1/m}^{(2i-1)-}-)}V_{y}(U(\zeta_{1/m}^{(2i-1)-}-))
-\mathrm{e}^{-J(\mathcal{T}_{2i-2})}V_{y}(U(\mathcal{T}_{2i-2}))
\right]
\nonumber\\
&+\sum_{i=1}^{k}\left[\mathrm{e}^{-J(\mathcal{T}_{2i})}\widetilde{V}_{y}(U(\mathcal{T}_{2i}))
-\mathrm{e}^{-J(\mathcal{T}_{2i-1})}\widetilde{V}_{y}(U(\mathcal{T}_{2i-1}))
\right]
\Big].
\end{align}
\item if $I(0)=1$,
for $i\geq1$ such that $\mathcal{T}_{2i-1}<\infty$, define $\zeta_{1/m}^{(2i)-}:=\inf\{t\geq \mathcal{T}_{2i-1}; U(t)\leq 1/m\}$
.
By the non-decreasing property of $V_{y}$ and $U(\mathcal{T}_{2i})\leq 0<U(\zeta_{1/m}^{(2i)-}-)$ 
when $\mathcal{T}_{2i}<\infty$, we have
\begin{align}
\label{4.13.01}
&
\left[\mathrm{e}^{-J(\zeta_{m})}V_{y}\left(U(\zeta_{m})\right)-\widetilde{V}_{y}(x)\right]
\mathbf{1}_{\left[\mathcal{T}_{2k+1},\mathcal{T}_{2k+2}\right)}(\zeta_{m})
\nonumber\\
\leq&
\mathbf{1}_{\left[\mathcal{T}_{2k+1},\mathcal{T}_{2k+2}\right)}(\zeta_{m})\Big[\mathrm{e}^{-J(\zeta_{m})}V_{y}\left(U(\zeta_{m})\right)
-\mathrm{e}^{-J(\mathcal{T}_{2k+1})}V_{y}(U(\mathcal{T}_{2k+1}))\notag\\
&+
\sum_{i=0}^{k}\left[
\mathrm{e}^{-J(\mathcal{T}_{2i+1})}\widetilde{V}_{y}(U(\mathcal{T}_{2i+1}))
-\mathrm{e}^{-J(\mathcal{T}_{2i})}\widetilde{V}_{y}(U(\mathcal{T}_{2i}))
\right]
\nonumber\\
&+\sum_{i=1}^{k}\left[\mathrm{e}^{-J(\zeta_{1/m}^{(2i)-}-)}
V_{y}(U(\zeta_{1/m}^{(2i)-}-))
-\mathrm{e}^{-J(\mathcal{T}_{2i-1})}V_{y}(U(\mathcal{T}_{2i-1}))
\right]
\Big],
\end{align}
and
\begin{align}
\label{4.14.01}
&\left[\mathrm{e}^{-J(\zeta_{m})}\widetilde{V}_{y}\left(U(\zeta_{m})\right)-
\widetilde{V}_{y}(x)\right]\mathbf{1}_{\left[\mathcal{T}_{2k},\mathcal{T}_{2k+1}\right)}(\zeta_{m})
\nonumber\\
\leq&
\mathbf{1}_{\left[\mathcal{T}_{2k},\mathcal{T}_{2k+1}\right)}(\zeta_{m})\Big[
\mathrm{e}^{-J(\zeta_{m})}\widetilde{V}_{y}\left(U(\zeta_{m})\right)
-\mathrm{e}^{-J(\mathcal{T}_{2k})}\widetilde{V}_{y}(U(\mathcal{T}_{2k}))\notag\\
&+
\sum_{i=1}^{k}\left[\mathrm{e}^{-J(\zeta_{1/m}^{(2i)-}-)}
V_{y}(U(\zeta_{1/m}^{(2i)-}-))
-\mathrm{e}^{-J(\mathcal{T}_{2i-1})}V_{y}(U(\mathcal{T}_{2i-1}))
\right]
\nonumber\\
&+
\sum_{i=1}^{k}\left[\mathrm{e}^{-J(\mathcal{T}_{2i-1})}\widetilde{V}_{y}(U(\mathcal{T}_{2i-1}))
-\mathrm{e}^{-J(\mathcal{T}_{2i-2})}\widetilde{V}_{y}(U(\mathcal{T}_{2i-2}))
\right]
\Big].
\end{align}
\end{itemize}
By the proof of Lemma \ref{thm4-2}, $\ell^{(q,\lambda)\prime}_{c}(x)$ is either increasing over $(0,\infty)$ or is initially decreasing and ultimately increasing over $(0,\infty)$, implying that $V_{y}(x)$ can be written as the difference of two convex functions.
Therefore, one can apply the Meyer-It\^{o} formula (see, Theorem 70
of Chapter IV in \cite{Protter05}) to the processes
$\big\{\mathrm{e}^{-qt}V_{y}(U_{2i-1}(t))\big\}$ and $\{\mathrm{e}^{-(q+\lambda)t}\widetilde{V}_{y}(\widetilde{U}_{2i}(t))\}$ (resp., $\big\{\mathrm{e}^{-(q+\lambda)t}\widetilde{V}_{y}(\widetilde{U}_{2i-1}(t))\big\}$ and $\{\mathrm{e}^{-qt}V_{y}(U_{2i}(t))\}$) with $U_{n+1}(t)$ and $\widetilde{U}_{n+1}(t)$ in Definition \ref{def001} for $n=2i-2$ and $2i-1$ 
when $I(0)=0$ (resp., I(0)=1) to expand the right hand sides of \eqref{4.13}, \eqref{4.14}, \eqref{4.13.01} and \eqref{4.14.01}, and sum them over $k\geq0$ to obtain that
{\small
\begin{align}\label{4.18}
&\mathrm{e}^{-J(\zeta_{m})}V_{y}(U(\zeta_{m}))\mathbf{1}_{\mathbb{S}}(\zeta_{m})
+\mathrm{e}^{-J(\zeta_{m})}\widetilde{V}_{y}(U(\zeta_{m}))
\mathbf{1}_{\overline{\mathbb{S}}}(\zeta_{m})
\nonumber\\
&
-V_{y}(x)\mathbf{1}_{\{I(0)=0\}}-\widetilde{V}_{y}(x)\mathbf{1}_{\{I(0)=1\}}
\leq
\int_{0-}^{\zeta_{m}}\mathrm{e}^{-J(s)}\mathcal{G}(U,I,\mathbb{S}_{m},\overline{\mathbb{S}})(s-)\mathrm{d}s\notag\\
&+\int_{0-}^{\zeta_{m}} \mathrm{e}^{-J(s)}
\big[\sigma V_{y}^{\prime}(U(s-))
\mathbf{1}_{\mathbb{S}_{m}}(s)
\mathrm{d}B(s)+\widetilde{\sigma}\widetilde{V}_{y}^{\prime}(U(s-))
\mathbf{1}_{\overline{\mathbb{S}}}(s)
\mathrm{d}\widetilde{B}(s)\big]
\nonumber\\
&-\int_{0-}^{\zeta_{m}} \mathrm{e}^{-J(s)}V_{y}^{\prime}(U(s))\mathbf{1}_{\mathbb{S}_{m}}(s)\mathbf{1}_{U(s+)\geq c}\mathrm{d}C_{D}(s)
-\sum_{s\leq \zeta_{m}} \mathrm{e}^{-J(s)}\triangle D(s)\mathbf{1}_{\mathbb{S}_{m}}(s)
\mathbf{1}_{U(s+)\geq c}\nonumber\\
&+\sum_{s\leq \zeta_{m}} \mathrm{e}^{-J(s)}\big[V_{y}(U(s+))-V_{y}(U(s+)+\triangle D(s))+\triangle D(s)\big]\mathbf{1}_{\mathbb{S}_{m}}(s)\mathbf{1}_{U(s+)\geq c}
\nonumber\\
&-\int_{0-}^{\zeta_{m}}\int_{0}^{1} \mathrm{e}^{-J(s)}
\left(V_{y}^{\prime}(U(s-))\mathbf{1}_{\mathbb{S}_{m}}(s)z\overline{N}(\mathrm{d}s,\mathrm{d}z)+ \widetilde{V}_{y}^{\prime}(U(s-))
\mathbf{1}_{\overline{\mathbb{S}}}(s)
z\overline{\widetilde{N}}(\mathrm{d}s,\mathrm{d}z)\right)
\nonumber\\
&+\int_{0-}^{\zeta_{m}}\int_{0}^{\infty}\mathrm{e}^{-J(s)}[V_{y}(U(s-)
-z)-V_{y}(U(s-))+V_{y}^{\prime}(U(s-))z\mathbf{1}_{z\leq 1}]
\mathbf{1}_{\mathbb{S}_{m}}(s)
\overline{N}(\mathrm{d}s,\mathrm{d}z)
\nonumber\\
&+\int_{0-}^{\zeta_{m}}\int_{0}^{\infty}\mathrm{e}^{-J(s)}[\widetilde{V}_{y}(U(s-)
-z)-\widetilde{V}_{y}(U(s-))+\widetilde{V}_{y}^{\prime}(U(s-))z\mathbf{1}_{z\leq 1}]
\mathbf{1}_{\overline{\mathbb{S}}}(s)
\overline{\widetilde{N}}(\mathrm{d}s,\mathrm{d}z),
\end{align}
}for $x\in(-\infty,\infty)$, where 
$\Delta D(s)=D(s+)-D(s)$, $\mathbb{S}_{m}=\cup_{i\geq0}[\mathcal{T}_{2i},\zeta_{1/m}^{(2i+1)-})\subseteq \mathbb{S}$ when $I(0)=0$, and $\mathbb{S}_{m}=\cup_{i\geq1}[\mathcal{T}_{2i-1},\zeta_{1/m}^{(2i)-})\subseteq \mathbb{S}$ when $I(0)=1$.  In light of the fact that $V^{\prime}(x)\geq 1$ for all $x\in[c,\infty)$, we have
\begin{align}
\left[V_{y}(U(s+))-V_{y}(U(s+)+\triangle D(s))+ \triangle D(s)\right]\mathbf{1}_{\mathbb{S}_{m}}(s)\mathbf{1}_{\{U(s+)\geq c\}}
\le 0.\label{4.19}
\end{align}
Hence, by \eqref{hjbine4}, \eqref{4.18}, \eqref{4.19} and the facts that $V_{y}^{\prime}(x)\geq1$ for $x\geq c$, $U(t)>0$ for all $t\in \mathbb{S}_{m}$ as well as $U(t)<c$ for all $t\in \overline{\mathbb{S}}$, for $x\in(-\infty,\infty)$, we have that
{\small
\begin{align}\label{3.36}
&\mathrm{e}^{-J(\zeta_{m})}V_{y}\left(U(\zeta_{m})\right)\mathbf{1}_{\mathbb{S}}(\zeta_{m})
+\mathrm{e}^{-J(\zeta_{m})}\widetilde{V}_{y}\left(U(\zeta_{m})\right)
\mathbf{1}_{\overline{\mathbb{S}}}(\zeta_{m})
-V_{y}(x)\mathbf{1}_{\{I(0)=0\}}-\widetilde{V}_{y}(x)\mathbf{1}_{\{I(0)=1\}}
\nonumber\\
\leq& -\sum_{s\leq
\zeta_{m}
}\mathrm{e}^{-J(s)}\Delta D(s)\mathbf{1}_{\mathbb{S}_{m}}(s)\mathbf{1}_{\{U(s+)\geq c\}}-\int_{0-}^{\zeta_{m}}\mathrm{e}^{-J(s)}\mathbf{1}_{\mathbb{S}_{m}}(s)\mathbf{1}_{\{U(s+)\geq c\}}\mathrm{d}C_{D}(s)
\nonumber\\
&+\int_{0-}^{\zeta_{m}} \mathrm{e}^{-J(s)}\left(\sigma
V_{y}^{\prime}(U(s))\mathbf{1}_{\mathbb{S}_{m}}(s)
\,\mathrm{d}B(s)+\widetilde{\sigma}\widetilde{V}_{y}^{\prime}(U(s))
\mathbf{1}_{\overline{\mathbb{S}}}(s)
\,\mathrm{d}\widetilde{B}(s)\right)
\nonumber\\
&-\int_{0-}^{\zeta_{m}}\int_{0}^{1} \mathrm{e}^{-J(s)}
\left(V_{y}^{\prime}(U(s-))\mathbf{1}_{\mathbb{S}_{m}}(s)z\overline{N}(\mathrm{d}s,\mathrm{d}z)+ \widetilde{V}_{y}^{\prime}(U(s-))
\mathbf{1}_{\overline{\mathbb{S}}}(s)
z\overline{\widetilde{N}}(\mathrm{d}s,\mathrm{d}z)\right)
\nonumber\\
&+\int_{0-}^{\zeta_{m}}\int_{0}^{\infty}\mathrm{e}^{-J(s)}\big[V_{y}(U(s-)
-z)-V_{y}(U(s-))+V_{y}^{\prime}(U(s-))z\mathbf{1}_{z\leq1}\big]
\mathbf{1}_{\mathbb{S}_{m}}(s)
\,
\overline{N}(\mathrm{d}s,\mathrm{d}z)
\nonumber\\
&+\int_{0-}^{\zeta_{m}}\int_{0}^{\infty}\mathrm{e}^{-J(s)}\big[\widetilde{V}_{y}(U(s-)
-z)-\widetilde{V}_{y}(U(s-))+\widetilde{V}_{y}^{\prime}(U(s-))z\mathbf{1}_{z\leq1}\big]
\mathbf{1}_{\overline{\mathbb{S}}}(s)
\overline{\widetilde{N}}(\mathrm{d}s,\mathrm{d}z)
.
\end{align}
}We claim that the stochastic integral
$$\int_{0-}^{\zeta_{m}}\int_{0}^{1} \mathrm{e}^{-J(s)}
\left(V_{y}^{\prime}(U(s-))\mathbf{1}_{\mathbb{S}_{m}}(s)z\overline{N}(\mathrm{d}s,\mathrm{d}z)+ \widetilde{V}_{y}^{\prime}(U(s-))
\mathbf{1}_{\overline{\mathbb{S}}}(s)
z\overline{\widetilde{N}}(\mathrm{d}s,\mathrm{d}z)\right),$$
has zero mean. Indeed, its expectation under $\mathbb{P}_{x,0}$ (the computation under $\mathbb{P}_{x,1}$ is similar and hence omitted) can be expressed by
{\small\begin{eqnarray}
&&
\mathbb{E}_{x,0} \Big[ \int_{0-}^{\zeta_{m}} \int_{0}^{1} \mathrm{e}^{-J(s)}
[V_{y}^{\prime}(U(s-))\mathbf{1}_{\mathbb{S}_{m}}(s)z\overline{N}(\mathrm{d}s,\mathrm{d}z)+ \widetilde{V}_{y}^{\prime}(U(s-))
\mathbf{1}_{\overline{\mathbb{S}}}(s)
z\overline{\widetilde{N}}(\mathrm{d}s,\mathrm{d}z)] \Big]
\nonumber\\
&=&
\sum_{i\geq 0}\mathbb{E}_{x,0}\Big[\mathbf{1}_{\{\mathcal{T}_{2i}<\zeta_{m}\}}\mathbb{E}_{x,0}\Big[
\int_{\mathcal{T}_{2i}}
^{\zeta_{m}\wedge \zeta_{1/m}^{(2i+1)-}}\int_{0}^{1} \mathrm{e}^{-J(s)}
V_{y}^{\prime}(U(s-))z\overline{N}(\mathrm{d}s,\mathrm{d}z)\Big|
\mathcal{F}_{\mathcal{T}_{2i}}\Big] \Big]
\nonumber\\
&&+
\sum_{i\geq 0} \mathbb{E}_{x,0} \Big[\mathbf{1}_{\{\mathcal{T}_{2i+1}<\zeta_{m}\}}
\mathbb{E}_{x,0} \Big[
\int_{\mathcal{T}_{2i+1}}^{\zeta_{m}\wedge\mathcal{T}_{2i+2}} \int_{0}^{1} \mathrm{e}^{-J(s)}
\widetilde{V}_{y}^{\prime}(U(s-))
z\overline{\widetilde{N}}(\mathrm{d}s,\mathrm{d}z)\Big|\mathcal{F}_{\mathcal{T}_{2i+1}}
 \Big]
 \Big]
\nonumber\\
&=&
\sum_{i\geq 0}\mathbb{E}_{x,0}\Big[\mathrm{e}^{-J(\mathcal{T}_{2i})}\mathbf{1}_{\{\mathcal{T}_{2i}<\zeta_{m}\}}
\mathbb{E}_{x,0}\Big[
\int_{0}
^{\xi_{1/m,m}^{(2i+1)}}
\nonumber\\
&&\quad \quad \quad
\times\int_{0}^{1} \mathrm{e}^{-qs}
V_{y}^{\prime}(U_{2i+1}(s-))z\overline{N}(\mathcal{T}_{2i}+\mathrm{d}s,\mathrm{d}z)\Big|
\mathcal{F}_{\mathcal{T}_{2i}}\Big]\Big]
\nonumber\\
&&+
\sum_{i\geq 0}\mathbb{E}_{x,0}\Big[\mathrm{e}^{-J(\mathcal{T}_{2i+1})}
\mathbf{1}_{\{\mathcal{T}_{2i+1}<\zeta_{m}\}}\mathbb{E}_{x,0}\Big[
\int_{0}^{\xi_{-m,c}^{(2i+2)}}
\nonumber\\
&&\quad \quad \quad \times
\int_{0}^{1} \mathrm{e}^{-(q+\lambda)s}
\widetilde{V}_{y}^{\prime}(\widetilde{U}_{2i+2}(s-))
z\overline{\widetilde{N}}(\mathcal{T}_{2i+1}+\mathrm{d}s,\mathrm{d}z)\Big|
\mathcal{F}_{\mathcal{T}_{2i+1}}\Big]\Big],
\label{3.29}
\end{eqnarray}}where the two inner conditional expectations on the right hand side of \eqref{3.29} equal to $0$ because of the L\'evy-It\^{o} decomposition theorem (see, Theorem 2.1 in \cite{Kyp2014}) and three facts: (i) $V_{y}^{\prime}(U_{2i+1}(s-))$ is uniformly bounded as $1/m< U_{2i+1}(s-)<m$ for all $s<\xi_{1/m,m}^{(2i+1)}$; (ii) $\widetilde{V}_{y}^{\prime}(\widetilde{U}_{2i+2}(s-))$ is uniformly bounded because $-m<\widetilde{U}_{2i+2}(s-)<c$ for all $s<\xi_{-m,c}^{(2i+2)}$; and (iii) 
$\{\widetilde{X}(\mathcal{T}_{2i+1}+s)-\widetilde{X}(\mathcal{T}_{2i+1});s\geq0\}$ (resp., $\{X(\mathcal{T}_{2i}+s)-X(\mathcal{T}_{2i});s\geq0\}$) is independent of $\mathcal{F}_{\mathcal{T}_{2i+1}}$ (resp., $\mathcal{F}_{\mathcal{T}_{2i}}$) as well as $\widetilde{N}(\mathcal{T}_{2i+1}+s,\mathrm{d}z)-\widetilde{N}(\mathcal{T}_{2i+1},\mathrm{d}z)$
(resp., $N(\mathcal{T}_{2i}+s,\mathrm{d}z)-N(\mathcal{T}_{2i},\mathrm{d}z)$) inducing the compensated Poisson random measure $\overline{\widetilde{N}}(\mathcal{T}_{2i+1}+\mathrm{d}s,\mathrm{d}z)$ (resp., $\overline{N}(\mathcal{T}_{2i}+\mathrm{d}s,\mathrm{d}z)$) is independent of $\mathcal{F}_{\mathcal{T}_{2i+1}}$ (resp., $\mathcal{F}_{\mathcal{T}_{2i}}$).
We also claim that the next two integrals
{\small
\begin{align*}
&\int_{0-}^{\zeta_{m}}\int_{0}^{\infty}\mathrm{e}^{-J(s)}\left(V_{y}(U(s-)
-z)-V_{y}(U(s-))+V_{y}^{\prime}(U(s-))z\mathbf{1}_{z\leq1}\right)
\mathbf{1}_{\mathbb{S}_{m}}(s)
\,
\overline{N}(\mathrm{d}s,\mathrm{d}z),
\\
&
\int_{0-}^{\zeta_{m}}\int_{0}^{\infty}\mathrm{e}^{-J(s)}\left(\widetilde{V}_{y}(U(s-)
-z)-\widetilde{V}_{y}(U(s-))+\widetilde{V}_{y}^{\prime}(U(s-))z\mathbf{1}_{z\leq1}\right)
\mathbf{1}_{\overline{\mathbb{S}}}(s)
\,\,
\overline{\widetilde{N}}(\mathrm{d}s,\mathrm{d}z),
\end{align*}}have zero mean. Here, we only show that the expectation of the first stochastic integral under $\mathbb{P}_{x,0}$ is $0$. In fact, similar to the arguments in handling \eqref{3.29}, we have
{\small\begin{eqnarray}
&&
\mathbb{E}_{x,0} \Big[
\int_{0-}^{\zeta_{m}}\int_{0}^{\infty}\mathrm{e}^{-J(s)}[V_{y}(U(s-)
-z)-V_{y}(U(s-))+V_{y}^{\prime}(U(s-))z\mathbf{1}_{z\leq1}]
\mathbf{1}_{\mathbb{S}_{m}}(s)
\,
\overline{N}(\mathrm{d}s,\mathrm{d}z)
 \Big]
\nonumber\\
&=&
\sum_{i\geq 0}\mathbb{E}_{x,0}\Big[\mathrm{e}^{-J(\mathcal{T}_{2i})}\mathbf{1}_{\{\mathcal{T}_{2i}<\zeta_{m}\}}\mathbb{E}_{x,0}\Big[
\int_{0}
^{\xi_{1/m,m}^{(2i+1)}}\int_{0}^{\infty}\mathrm{e}^{-qs}
\nonumber\\
&&
\times[V_{y}(U_{2i+1}(s-)
-z)-V_{y}(U_{2i+1}(s-))+V_{y}^{\prime}(U_{2i+1}(s-))z\mathbf{1}_{z\leq1}]
\overline{N}(\mathcal{T}_{2i}+\mathrm{d}s,\mathrm{d}z)\Big|
\mathcal{F}_{\mathcal{T}_{2i}}\Big] \Big]
\nonumber\\
&=&0,\nonumber
\end{eqnarray}}where we have used Corollary 4.6 in \cite{Kyp2014}. To be precise, one key condition of Corollary 4.6 in \cite{Kyp2014} is that the expected integral of the integrand in the above inner integral with respect to the L\'evy measure $\upsilon$ is finite, which is already verified in Lemma \ref{lem3.4} thanks to the fact $1/m<U_{2i+1}(s-)<m$ for every $s<\xi_{1/m,m}^{(2i+1)}$. Other conditions of Corollary 4.6 in \cite{Kyp2014} including the process in the integrand of the above inner integral being left-continuous can all be checked easily.
Similarly, it holds that the integral
$$\int_{0-}^{\zeta_{m}} \mathrm{e}^{-J(s)}\big(\sigma
V_{y}^{\prime}(U(s-))\mathbf{1}_{\mathbb{S}_{m}}(s)
\mathrm{d}B(s)+\widetilde{\sigma}
\widetilde{V}_{y}^{\prime}(U(s-))\mathbf{1}_{\overline{\mathbb{S}}}(s)
\mathrm{d}\widetilde{B}(s)\big),$$
also has zero mean. Here, we only show the expectation of the above integral under $\mathbb{P}_{x,0}$ is $0$. Actually, we have
\begin{eqnarray}
&&
\mathbb{E}_{x,0} \Big[ \int_{0-}^{\zeta_{m}} \mathrm{e}^{-J(s)}\big[\sigma
V_{y}^{\prime}(U(s-))\mathbf{1}_{\mathbb{S}_{m}}(s)
\mathrm{d}B(s)+\widetilde{\sigma}
\widetilde{V}_{y}^{\prime}(U(s-))\mathbf{1}_{\overline{\mathbb{S}}}(s)
\mathrm{d}\widetilde{B}(s)\big] \Big]
\nonumber\\
&=&
\sum_{i\geq 0}\mathbb{E}_{x,0}\Big[\mathrm{e}^{-J(\mathcal{T}_{2i})}\mathbf{1}_{\{\mathcal{T}_{2i}<\zeta_{m}\}}
\mathbb{E}_{x,0}\Big[
\int_{0}
^{\xi_{1/m,m}^{(2i+1)}}
\sigma \mathrm{e}^{-qs} V_{y}^{\prime}(U_{2i+1}(s-))
\nonumber\\
&&
\times \mathrm{d}B(\mathcal{T}_{2i}+s)\Big|
\mathcal{F}_{\mathcal{T}_{2i}}\Big]\Big]
+
\sum_{i\geq 0}\mathbb{E}_{x,0}\Big[\mathrm{e}^{-J(\mathcal{T}_{2i+1})}
\mathbf{1}_{\{\mathcal{T}_{2i+1}<\zeta_{m}\}}\mathbb{E}_{x,0}\Big[
\int_{0}^{\xi_{-m,c}^{(2i+2)}}
\nonumber\\
&&\quad\quad\quad\times
\widetilde{\sigma} \mathrm{e}^{-(q+\lambda)s}\widetilde{V}_{y}^{\prime}(\widetilde{U}_{2i+2}(s-))\mathrm{d}\widetilde{B}(\mathcal{T}_{2i+1}+s)\Big|
\mathcal{F}_{\mathcal{T}_{2i+1}}\Big]\Big]
,
\nonumber
\label{3.30}
\end{eqnarray}
which is $0$ due to the arguments on Page 146 in \cite{KarS1991}, and the facts that the integrands are uniformly bounded and $B(\mathcal{T}_{2i}+s)-B(\mathcal{T}_{2i})$ (reps., $\widetilde{B}(\mathcal{T}_{2i+1}+s)-\widetilde{B}(\mathcal{T}_{2i+1})$) is a standard Brownian motion independent of $\mathcal{F}_{\mathcal{T}_{2i}}$ (resp., $\mathcal{F}_{\mathcal{T}_{2i+1}}$). 
Taking expectations on both sides of \eqref{3.36} and recalling the no-negativity of $V_{y}(x)$ and $\widetilde{V}_{y}(x)$, and $\mathbf{1}_{\mathbb{S}_{m}}(s)\uparrow\mathbf{1}_{\mathbb{S}}(s)$ a.s. as $m\uparrow\infty$, we have that
\begin{align}\label{3.37}
&V_{y}(x)\mathbf{1}_{\{I(0)=0\}}+\widetilde{V}_{y}(x)\mathbf{1}_{\{I(0)=1\}}
\nonumber\\
\geq&
\mathbb{E}_{x,I(0)}\Big[\int_{0-}^{\zeta_{m}} \mathrm{e}^{-J(s)}\mathbf{1}_{\mathbb{S}_{m}}(s)\mathbf{1}_{\{U(s+)\geq c\}}\mathrm{d}C_{D}(s)\Big]\notag\\
&+ \mathbb{E}_{x,I(0)}\Big[\sum_{s\leq \zeta_{m}}\mathrm{e}^{-J(s)}( D(s+)-D(s))\mathbf{1}_{\mathbb{S}_{m}}(s)\mathbf{1}_{\{U(s+)\geq c\}}\Big]
\nonumber\\
\rightarrow&
\mathbb{E}_{x,I(0)}\Big[\int_{0-}^{\infty} \mathrm{e}^{-J(s)}\mathbf{1}_{\{U(s+)\geq c\}}\mathrm{d}C_{D}(s)\Big]\notag\\
&+ \mathbb{E}_{x,I(0)}\Big[\sum_{s<\infty}\mathrm{e}^{-J(s)}( D(s+)-D(s))\mathbf{1}_{\{U(s+)\geq c\}}\Big]
\nonumber\\
=&
\mathbb{E}_{x,I(0)}\Big[\int_{0-}^{\infty} \mathrm{e}^{-J(s)}\mathbf{1}_{\{U(s+)\geq c\}}\,\mathrm{d}D(s)\Big]
\nonumber\\
=&
V_{D}(x)\mathbf{1}_{\{I(0)=0\}}+\widetilde{V}_{D}(x)\mathbf{1}_{\{I(0)=1\}},\quad x\in(-\infty,\infty),\,m\rightarrow\infty,\,t\rightarrow\infty,
\end{align}
where the last equality follows by the Poisson method introduced in \cite{LiZ2014} and the memory-less property of exponential random variables. To wit, we denote by $(T_{i})_{i\geq 1}$ the successive arrival times of a Poisson processes with rate $\lambda$ that are independent of the process $(X(t),\widetilde{X}(t),U(t), I(t))_{t\geq 0}$, and denote by $\mathcal{F}_{X,\widetilde{X},U,I}$ the smallest sigma field generated by it. It holds that
{\small\begin{align}
&\mathbb{E}_{x,I(0)}\Big[\int_{0-}^{\infty} \mathrm{e}^{-J(t)}\mathbf{1}_{\{U(t+)\geq c\}}\,\mathrm{d}D(t)\Big]
\nonumber\\
=&
\mathbb{E}_{x,I(0)}\Big[\int_{0-}^{\infty} \mathrm{e}^{-qt}\,\mathbb{E}\Big[\left.\mathbf{1}_{\{((T_{i})_{i\geq 1})\,\cap\,\overline{\mathbb{S}}\,\cap\,[0,t]=\emptyset\}}\right|\mathcal{F}_{X,\widetilde{X},U,I}\Big]\mathbf{1}_{\{U(t+)\geq c\}}\,\mathrm{d}D(t)\Big]
\nonumber\\
=&
\mathbb{E}_{x,I(0)}\Big[\,\mathbb{E}\Big[\int_{0-}^{\infty} \mathrm{e}^{-qt}\mathbf{1}_{\{T_{D}>t\}}\mathbf{1}_{\{U(t+)\geq c\}}\,\mathrm{d}D(t)\Big|\mathcal{F}_{X,\widetilde{X},U,I}\Big]\Big]
\nonumber\\
=&
\mathbb{E}_{x,I(0)}\Big[\int_{0-}^{T_{D}} \mathrm{e}^{-qt}\mathbf{1}_{\{U(t+)\geq c\}}\,\mathrm{d}D(t)\Big]
\nonumber\\
=&
V_{D}(x)\mathbf{1}_{\{I(0)=0\}}+\widetilde{V}_{D}(x)\mathbf{1}_{\{I(0)=1\}}, \quad x\in(-\infty,\infty).\nonumber
\end{align}}By \eqref{3.37} and the arbitrariness of $D$, 
we can conclude that
$V_{y}(x)\geq \sup\limits_{D\in\mathcal{D}}V_{D}(x)$ for all $x\in\left(0,\infty\right)$, and $\widetilde{V}_{y}(x)\geq \sup\limits_{D\in\mathcal{D}}\widetilde{V}_{D}(x)$ for all $x\in\left(-\infty,c\right)$.
\end{proof}

By Theorem \ref{Lem4-1}, to prove the optimality of the barrier dividend strategy with the barrier $d^{*}$ defined in \eqref{def.d*}, it only remains to verify that $\mathcal{A}V_{d^{*}}(x)-q V_{d^{*}}(x)\leq 0$ for almost every $x\in\left(0,\infty\right)$, that $\widetilde{\mathcal{A}}\widetilde{V}_{d^{*}}(x)-\left(q+\lambda\right) \widetilde{V}_{d^{*}}(x)\leq 0$ for almost every $x\in\left(-\infty,c\right)$, that $V^{\prime}(x)\geq 1$ for almost every $x\in\left[c,\infty\right)$, and that both $V_{d^{*}}(x)$ and $\widetilde{V}_{d^{*}}(x)$ are non-decreasing.

Similar to the proof of Lemma 4.2 in \cite{KypRS2010} or a method in Section 5 of \cite{AvPP2015}, we have the next result.

\begin{lem}\label{lem4-2}
Let $d^{*}$ and $V_{y}(x)$ be defined in \eqref{def.d*} and \eqref{V_{y}(x)}. For any $y\in\left[d^{*},\infty\right)$ and any $x\in\left(0,y\right)$, we have that
\begin{align}\label{ver.xles.a.01}
\mathcal{A}V_{y}(x)-q V_{y}(x)=0.
\end{align}
In particular, it holds that
$$\mathcal{A}V_{d^{*}}(x)-q V_{d^{*}}(x)=0, \quad x\in\left(0,d^{*}\right).$$
\end{lem}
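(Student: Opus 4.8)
The plan is to exploit the closed form of $V_y$ on $(-\infty,y)$ and reduce the claim to the classical fact that the scale functions of $X$ are $(\mathcal{A}-q)$-invariant on $(0,\infty)$. Matching the definition \eqref{V_{y}(x)} against \eqref{ell.def.}, and recalling the conventions $W_q\equiv 0$ and $Z_q(\cdot,\widetilde{\Phi}_{q+\lambda})\equiv\mathrm{e}^{\widetilde{\Phi}_{q+\lambda}\,\cdot}$ on $(-\infty,0)$, one sees at once that $V_y(x)=\big(\ell^{(q,\lambda)\prime}_c(y)\big)^{-1}\ell^{(q,\lambda)}_c(x)$ for \emph{every} $x\in(-\infty,y)$. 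Hence, for $x\in(0,y)$ the values $V_y(x-z)$, $z>0$, that enter the integral term of $\mathcal{A}$ in \eqref{operatorHJB} all lie in $(-\infty,x]\subseteq(-\infty,y)$, so $(\mathcal{A}-q)V_y(x)=\big(\ell^{(q,\lambda)\prime}_c(y)\big)^{-1}(\mathcal{A}-q)\ell^{(q,\lambda)}_c(x)$. Writing $\ell^{(q,\lambda)}_c=\alpha W_q+\beta Z_q(\cdot,\widetilde{\Phi}_{q+\lambda})$ with the \emph{constants} $\alpha:=1-\mathrm{e}^{-\widetilde{\Phi}_{q+\lambda}c}Z_q(c,\widetilde{\Phi}_{q+\lambda})$ and $\beta:=\mathrm{e}^{-\widetilde{\Phi}_{q+\lambda}c}W_q(c)$, the lemma thus reduces, by linearity, to showing $(\mathcal{A}-q)W_q(x)=0$ and $(\mathcal{A}-q)Z_q(x,\widetilde{\Phi}_{q+\lambda})=0$ for all $x\in(0,\infty)$.

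For these two identities I would follow the probabilistic route used for Lemma~4.2 of \cite{KypRS2010}. Fix any $z>y$ and $x\in(0,z)$. The two-sided exit formulas \eqref{two.side.e.} and \eqref{two.side.d.}, together with $W_q(X(\tau_0^-))=0$ and $Z_q(X(\tau_0^-),\widetilde{\Phi}_{q+\lambda})=\mathrm{e}^{\widetilde{\Phi}_{q+\lambda}X(\tau_0^-)}$ on $\{\tau_0^-<\tau_z^+\}$, show that $\big\{\mathrm{e}^{-q(t\wedge\tau_0^-\wedge\tau_z^+)}h(X(t\wedge\tau_0^-\wedge\tau_z^+))\big\}_{t\ge0}$ is a bounded $\mathbb{P}_x$-martingale for $h\in\{W_q,\,Z_q(\cdot,\widetilde{\Phi}_{q+\lambda})\}$, the full martingale property following from the strong Markov property. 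Next I would apply the Meyer--It\^o formula exactly as in the proof of Lemma~\ref{segm.exp.} --- $h$ being a difference of convex functions on compacts, and the local-time term vanishing (by Corollary~1 of Theorem~70, Chapter~IV, in \cite{Protter05} when $\sigma=0$, and being absorbed into $\tfrac{\sigma^2}{2}h''$ when $\sigma>0$) --- to identify the finite-variation part of this martingale as $\int_0^{t\wedge\tau_0^-\wedge\tau_z^+}\mathrm{e}^{-qs}(\mathcal{A}-q)h(X(s-))\,\mathrm{d}s$; finiteness of the integro-differential term on $(0,\infty)$ comes from the same mean-value-theorem estimates used to prove Lemma~\ref{lem3.4}. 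Being the drift of a martingale, this integral vanishes identically in $t$. Since $x\mapsto(\mathcal{A}-q)h(x)$ is continuous on $(0,z)$ --- the jump part as in Lemma~\ref{lem3.4}, the local part because $W_q\in C^1(0,\infty)$ and $Z_q(\cdot,\widetilde{\Phi}_{q+\lambda})\in C^2(0,\infty)$ under the log-convex tail hypothesis (with $W_q\in C^2$ when $\sigma>0$) --- and since $X$ has no monotone paths and therefore spends strictly positive expected time in every open subinterval of $(0,z)$ before $\tau_0^-\wedge\tau_z^+$, one concludes $(\mathcal{A}-q)h\equiv 0$ on $(0,z)$, hence on all of $(0,\infty)$ because $z>y$ was arbitrary.

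Combining the two steps gives $(\mathcal{A}-q)\ell^{(q,\lambda)}_c(x)=0$, and therefore $\mathcal{A}V_y(x)-qV_y(x)=0$, for every $x\in(0,y)$ and every $y\in[d^*,\infty)$; taking $y=d^*$ (permissible since $d^*\ge c$ by Lemma~\ref{thm4-2}) yields the stated consequence. The only point I expect to require genuine care is the Meyer--It\^o step in the bounded-variation case $\sigma=0$, where $W_q$ possesses only one-sided derivatives off a countable set and the jump integral must be dominated near $0$; but this is precisely the technical situation already handled in Lemma~\ref{segm.exp.} (and in \cite{KypRS2010}), so it can be invoked rather than re-proved. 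Alternatively, one may bypass the martingale argument altogether by quoting the standard facts $(\mathcal{A}-q)W_q\equiv 0$ and $(\mathcal{A}-q)Z_q(\cdot,\theta)\equiv 0$ on $(0,\infty)$ directly from the fluctuation theory of spectrally negative L\'evy processes, e.g.\ \cite{Kyp2014}.
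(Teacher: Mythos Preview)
Your proposal is correct and follows essentially the same route as the paper's own proof: both reduce the claim to $(\mathcal{A}-q)\ell^{(q,\lambda)}_{c}\equiv 0$ on $(0,y)$ via the martingale property of the stopped processes $\mathrm{e}^{-q(t\wedge\tau^+\wedge\tau^-)}W_q(X)$ and $\mathrm{e}^{-q(t\wedge\tau^+\wedge\tau^-)}Z_q(X,\widetilde{\Phi}_{q+\lambda})$, apply the Meyer--It\^o formula to extract the drift, and invoke positivity of the $q$-resolvent (occupation) density to force the drift integrand to vanish. The only cosmetic differences are that the paper cites \cite{AvPP2015} for the martingale property where you derive it from \eqref{two.side.e.}--\eqref{two.side.d.} plus the strong Markov property, and the paper localises to $(\nu_1,\nu_2)\subset(0,y)$ whereas you localise to $(0,z)$ with $z>y$; your explicit remark that $V_y=(\ell^{(q,\lambda)\prime}_c(y))^{-1}\ell^{(q,\lambda)}_c$ holds on all of $(-\infty,y)$ (so that the nonlocal jump term is covered) is a useful clarification the paper leaves implicit.
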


\begin{proof}
By Equation (5.1), Definition 5.1, as well as Corollary 5.9 in \cite{AvPP2015}, it follows that
$(\mathrm{e}^{-q(t\wedge \tau_{y}^{+}\wedge \tau_{0}^{-})}W_{q}(X(t\wedge \tau_{y}^{+}\wedge \tau_{0}^{-})))_{t\geq 0}$ and $(\mathrm{e}^{-q(t\wedge \tau_{y}^{+}\wedge \tau_{0}^{-})}Z_{q}(X(t\wedge \tau_{y}^{+}\wedge \tau_{0}^{-}), \widetilde{\Phi}_{q+\lambda}))_{t\geq 0}$ are two martingales. Recall that $\ell^{(q,\lambda)}_{c}(x)$ is a linear combination of $W_{q}(x)$ and $Z_{q}(x, \widetilde{\Phi}_{q+\lambda})$ (see \eqref{ell.def.}). As a result, for arbitrary $(\nu_{1},\nu_{2})\subseteq(0,y)$, the stopped process $\{\mathrm{e}^{-q(t\wedge \tau_{\nu_{2}}^{+}\wedge \tau_{\nu_{1}}^{-})}\ell^{(q,\lambda)}_{c}(X(t\wedge \tau_{\nu_{2}}^{+}\wedge \tau_{\nu_{1}}^{-}));\, t\geq 0\}$ is a martingale.
We claim that this martingale property implies
\eqref{ver.xles.a.01}.
Indeed, Meyer-It\^{o} formula gives
\begin{align}
&\mathrm{e}^{-q(t\wedge \tau_{\nu_{2}}^{+}\wedge \tau_{\nu_{1}}^{-})}\ell^{(q,\lambda)}_{c}\left(X(t\wedge \tau_{\nu_{2}}^{+}\wedge \tau_{\nu_{1}}^{-})\right)-\ell^{(q,\lambda)}_{c}\left(x\right)\nonumber\\
=&
\int_{0-}^{t\wedge \tau_{\nu_{2}}^{+}\wedge \tau_{\nu_{1}}^{-}}\mathrm{e}^{-q s}(\mathcal{A}-q)\ell^{(q,\lambda)}_{c}(X(s-))\mathrm{d}s\notag\\
&+\int_{0-}^{t\wedge \tau_{\nu_{2}}^{+}\wedge \tau_{\nu_{1}}^{-}} \mathrm{e}^{-q s}\ell^{(q,\lambda)\prime}_{c}(X(s-))\Big[\sigma\mathrm{d}B(s)-\int_{0}^{1}z\overline{N}(\mathrm{d}s,\mathrm{d}z)\Big]\nonumber\\
&+\int_{0-}^{t\wedge \tau_{\nu_{2}}^{+}\wedge \tau_{\nu_{1}}^{-}}\int_{0}^{\infty}\mathrm{e}^{-q s}\Big[\ell^{(q,\lambda)}_{c}\left(X(s-)-z\right)-\ell^{(q,\lambda)}_{c}\left(X(s-)\right)\notag\\
&+\ell^{(q,\lambda)\prime}_{c}(X(s-))z\mathbf{1}_{(0,1]}(z)\Big]
\overline{N}(\mathrm{d}s,\mathrm{d}z),\, t\geq0.\nonumber
\end{align}
By Lemma \ref{lem3.4}, $(\mathcal{A}-q)\ell^{(q,\lambda)}_{c}(x)$ is bounded on $[\nu_{1},\nu_{2}]$ and is continuous in $x\in[\nu_{1},\nu_{2}]$.
Hence, taking expectations on both sides of the above equality and using the dominated convergence theorem by sending $t$ to $+\infty$, we arrive at
\begin{align}
0&=\frac{1}{q}\mathbb{E}_{x}\Big[\int_{0-}^{\infty}q\mathrm{e}^{-q s}(\mathcal{A}-q)\ell^{(q,\lambda)}_{c}\left(X(s)\right)\mathbf{1}_{\{s<\tau_{\nu_{2}}^{+}\wedge \tau_{\nu_{1}}^{-}\}}\mathrm{d}s\Big]
\nonumber\\
&=\int_{\nu_{1}}^{\nu_{2}}(\mathcal{A}-q)\ell^{(q,\lambda)}_{c}(\omega)
\frac{1}{q}\mathbb{P}_{x}
\left(X_{e_{q}}\in\mathrm{d}\omega,e_{q}<\tau_{\nu_{2}}^{+}\wedge \tau_{\nu_{1}}^{-}\right).\label{eq-0-0}
\end{align}
Together with the arbitrariness of $(\nu_{1},\nu_{2})$, the continuity of  $(\mathcal{A}-q)\ell^{(q,\lambda)}_{c}(x)$ in $x\in[\nu_{1},\nu_{2}]$, and the fact that the $q$-potential measure $\frac{1}{q}\mathbb{P}_{x}
\left(X_{e_{q}}\in\mathrm{d}\omega,e_{q}<\tau_{\nu_{2}}^{+}\wedge \tau_{\nu_{1}}^{-}\right)$ has a strictly positive resolvent density on $(\nu_{1},\nu_{2})$ (see the proof of Lemma 4.2 in \cite{KypRS2010}), the equality in \eqref{eq-0-0} implies the desired result (\ref{ver.xles.a.01}) for $x\in\left(0,y\right)$.
\end{proof}

Moreover, we have the next auxiliary result.
\begin{lem}\label{lem4-3}
Suppose that 
 the tail of the L\'evy measure $\upsilon$ is log-convex. Let $d^{*}$ and $V_{y}(x)$ be defined in \eqref{def.d*} and \eqref{V_{y}(x)}, respectively. We have that $V_{d^{*}}^{\prime}(x)\geq 1$ for $x\in[c,\infty)$, and
\begin{align}\label{ver.xles.a}
\mathcal{A}V_{d^{*}}(x)-q V_{d^{*}}(x)\leq 0, \quad x\in(d^{*}, \infty).
\end{align}
\end{lem}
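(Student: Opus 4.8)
The plan is to prove the two assertions separately: the slope bound $V_{d^{*}}^{\prime}(x)\geq 1$ on $[c,\infty)$ will be essentially immediate from the definition of $d^{*}$, while the generator inequality \eqref{ver.xles.a} is the substance and will be obtained by a comparison argument of the type used for the classical de Finetti problem (cf.\ \cite{Loe2008} and \cite{KypRS2010}).

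For the slope bound, recall from \eqref{V_{y}(x)} that $V_{d^{*}}^{\prime}(x)=\ell_{c}^{(q,\lambda)\prime}(x)/\ell_{c}^{(q,\lambda)\prime}(d^{*})$ on $[c,d^{*}]$ and $V_{d^{*}}^{\prime}(x)=1$ on $(d^{*},\infty)$. Since $d^{*}$ is, by \eqref{def.d*}, the largest minimiser of $\ell_{c}^{(q,\lambda)\prime}$ over $[c,\infty)$, we have $\ell_{c}^{(q,\lambda)\prime}(x)\geq\ell_{c}^{(q,\lambda)\prime}(d^{*})$ for all $x\geq c$, and $\ell_{c}^{(q,\lambda)\prime}(d^{*})>0$ because $\ell_{c}^{(q,\lambda)\prime}$ is log-convex (hence strictly positive) when $\psi(\widetilde{\Phi}_{q+\lambda})-q>0$, and is shown to be strictly positive/increasing from a positive value in the other two cases in the proof of Lemma \ref{thm4-2}. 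Therefore $V_{d^{*}}^{\prime}\geq1$ on $[c,\infty)$.

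For the generator inequality, I would first note that, applying Lemma \ref{lem4-2} with a barrier $y>x$ (possible since $x>d^{*}$) and using that $V_{y}$ coincides with $\ell_{c}^{(q,\lambda)}/\ell_{c}^{(q,\lambda)\prime}(y)$ on $(-\infty,y]\supseteq(-\infty,x]$, one obtains $(\mathcal{A}-q)\ell_{c}^{(q,\lambda)}(x)=0$. Fix $x\in(d^{*},\infty)$ and set
$$g:=V_{d^{*}}-\frac{\ell_{c}^{(q,\lambda)}}{\ell_{c}^{(q,\lambda)\prime}(x)},$$
which lies in the domain of $\mathcal{A}$ by Lemma \ref{lem3.4} together with the regularity of $W_{q}$ and $Z_{q}(\cdot,\widetilde{\Phi}_{q+\lambda})$ recorded under the log-convexity hypothesis; by the previous sentence $(\mathcal{A}-q)V_{d^{*}}(x)=(\mathcal{A}-q)g(x)$. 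The key is to check that (a) $g(w)\leq g(x)$ for all $w\leq x$, with $g(x)\geq0$; (b) $g^{\prime}(x)=0$; and (c) $g^{\prime\prime}(x)\leq0$ when $\sigma>0$. These are read off from the two-piece description of $g$: on $(-\infty,d^{*}]$ one has $g=c_{0}\,\ell_{c}^{(q,\lambda)}$ with $c_{0}:=\ell_{c}^{(q,\lambda)\prime}(d^{*})^{-1}-\ell_{c}^{(q,\lambda)\prime}(x)^{-1}\geq0$, so $g\geq0$ there and, by the monotonicity of $\ell_{c}^{(q,\lambda)}$ (Remark \ref{rem01}), $g(w)\leq g(d^{*})$ for $w\leq d^{*}$; on $(d^{*},\infty)$ one computes
$$g(w)=c_{0}\,\ell_{c}^{(q,\lambda)}(d^{*})+\frac{1}{\ell_{c}^{(q,\lambda)\prime}(x)}\int_{d^{*}}^{w}\big(\ell_{c}^{(q,\lambda)\prime}(x)-\ell_{c}^{(q,\lambda)\prime}(u)\big)\,\mathrm{d}u,$$
so that, using the monotonicity of $\ell_{c}^{(q,\lambda)\prime}$ on $[d^{*},\infty)$ from Lemma \ref{thm4-2}, $g\geq0$ and $g^{\prime}(w)=1-\ell_{c}^{(q,\lambda)\prime}(w)/\ell_{c}^{(q,\lambda)\prime}(x)\geq0$ on $(d^{*},x)$, while $g^{\prime}(x)=0$ and $g^{\prime\prime}(x)=-\ell_{c}^{(q,\lambda)\prime\prime}(x)/\ell_{c}^{(q,\lambda)\prime}(x)\leq0$. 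Granting (a)--(c), expanding
$$(\mathcal{A}-q)g(x)=\gamma g^{\prime}(x)+\tfrac{\sigma^{2}}{2}g^{\prime\prime}(x)+\int_{0}^{\infty}\big[g(x-z)-g(x)+g^{\prime}(x)z\mathbf{1}_{\{z\leq1\}}\big]\upsilon(\mathrm{d}z)-qg(x)$$
finishes the proof: the first term vanishes by (b), the second is $\leq0$ by (c), the integral reduces by (b) to $\int_{0}^{\infty}[g(x-z)-g(x)]\upsilon(\mathrm{d}z)\leq0$ by the first part of (a), and $-qg(x)\leq0$ by the non-negativity in (a); hence $(\mathcal{A}-q)V_{d^{*}}(x)=(\mathcal{A}-q)g(x)\leq0$ on $(d^{*},\infty)$.

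The main obstacle I anticipate is pinning down the correct comparison function: subtracting a rescaled $V_{y}$ instead produces a non-positive $g$, which flips the sign of the $-qg(x)$ term; it is essential to subtract $\ell_{c}^{(q,\lambda)}/\ell_{c}^{(q,\lambda)\prime}(x)$ normalised at the \emph{running} point $x$ rather than at $d^{*}$, since this is exactly what makes $g\geq0$ while preserving the smooth fit $g^{\prime}(x)=0$. Secondary care is needed to handle the possible discontinuity of $\ell_{c}^{(q,\lambda)}$ (hence of $g$) at the origin when $X$ has bounded variation — which is why (a) is phrased as $g\le g(x)$ on $(-\infty,x]$ rather than as global monotonicity — and to verify that $g$ belongs to the domain of $\mathcal{A}$ and that the $C^{2}$-regularity used in (c) is available, both of which rest on the log-convexity of the tail of $\upsilon$ and are already in the excerpt.
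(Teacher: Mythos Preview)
Your proof is correct and takes essentially the same approach as the paper: both compare $(\mathcal{A}-q)V_{d^{*}}(x)$ to the identity $(\mathcal{A}-q)[\ell_{c}^{(q,\lambda)}/\ell_{c}^{(q,\lambda)\prime}(x)](x)=0$ coming from Lemma~\ref{lem4-2}, and both reduce the resulting inequality to the monotonicity of $\ell_{c}^{(q,\lambda)\prime}$ on $[d^{*},\infty)$ supplied by Lemma~\ref{thm4-2}. The only difference is presentation: the paper writes out the comparison term by term (its displays (3.25)--(3.26) and the estimates (4.20), (4.21), (\ref{sign.lim.oper.}), (4.26)), whereas you package the same computations into the single auxiliary function $g=V_{d^{*}}-\ell_{c}^{(q,\lambda)}/\ell_{c}^{(q,\lambda)\prime}(x)$ and the three properties (a)--(c); your formulation is a bit cleaner but the content is identical.
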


\begin{proof}
By the definitions of $d^{*}$ in \eqref{def.d*} and $V_{d^{*}}$ in \eqref{V_{y}(x)}, it is straightforward to see that $V_{d^{*}}^{\prime}(x)\geq 1$ for $x\in[c,\infty)$.
It remains to prove \eqref{ver.xles.a}.
First, thanks to \eqref{V_{y}(x)} 
and (\ref{ver.xles.a.01}), it holds that, for $x\in(d^{*},\infty)$
\begin{align}\label{3.25}
0=&\lim\limits_{v\uparrow x}\big[\mathcal{A}V_{x}(v)-q V_{x}(v)\big]=
\gamma+\frac{\sigma^{2}}{2}\frac{\ell^{(q,\lambda)\prime\prime}_{c}(x)}
{\ell^{(q,\lambda)\prime}_{c}(x)}\nonumber\\
&+\int_{0}^{\infty}
\Big[\frac{\ell^{(q,\lambda)}_{c}(x-z)}{\ell^{(q,\lambda)\prime}_{c}(x)}
-\frac{\ell^{(q,\lambda)}_{c}(x)}{\ell^{(q,\lambda)\prime}_{c}(x)}
+z\mathbf{1}_{(0,1)}(z)\Big]
\upsilon(\mathrm{d}z)-q\frac{\ell^{(q,\lambda)}_{c}(x)}
{\ell^{(q,\lambda)\prime}_{c}(x)}.
\end{align}
On the other hand, using the definition \eqref{V_{y}(x)} with $y=d^{*}$, 
we get that, for $x\in(d^{*},\infty)$,
\begin{align}\label{3.26}
\mathcal{A}V_{d^{*}}(x)-q V_{d^{*}}(x)=&\gamma+\int_{x-d^{*}}^{\infty}
\Big[\frac{\ell^{(q,\lambda)}_{c}(x-z)}{\ell^{(q,\lambda)\prime}_{c}(d^{*})}
-\frac{\ell^{(q,\lambda)}_{c}(d^{*})}{\ell^{(q,\lambda)\prime}_{c}(d^{*})}\notag\\
&-(x-d^{*})
+z\mathbf{1}_{(0,1)}(z)\Big]\upsilon(\mathrm{d}z)
\nonumber\\
&+\int_{0}^{x-d^{*}}
\left(-z
+z\mathbf{1}_{(0,1)}(z)\right)\upsilon(\mathrm{d}z)-q\Big[\frac{\ell^{(q,\lambda)}_{c}(d^{*})}
{\ell^{(q,\lambda)\prime}_{c}(d^{*})}+(x-d^{*})\Big].
\end{align}
Lemma \ref{thm4-2} gives that $\ell^{(q,\lambda)\prime}_{c}$ is increasing on $[d^{*},\infty)$, which together with Remark \ref{rem01} implies that
\begin{align}\label{4.20}
\frac{\ell^{(q,\lambda)\prime\prime}_{c}(x)}
{\ell^{(q,\lambda)\prime}_{c}(x)}\geq 0,\quad x\in\left(d^{*},\infty\right),
\end{align}
when $\sigma\in(0,\infty)$. Here, note that if $\sigma>0$, $W_{q}(x)$ (hence, $\ell^{(q,\lambda)}_{c}(x)$) is twice continuously differentiable. By virtue of Lemma \ref{thm4-2} and the mean value theorem, for $x\in\left(d^{*},\infty\right)$, $z\in\left(0,x-d^{*}\right]$, we get that
\begin{align}\label{4.21}
&\Big[\frac{\ell^{(q,\lambda)}_{c}(x-z)}{\ell^{(q,\lambda)\prime}_{c}(x)}
-\frac{\ell^{(q,\lambda)}_{c}(x)}{\ell^{(q,\lambda)\prime}_{c}(x)}\Big]
-(-z)=z\left(1-\frac{\ell^{(q,\lambda)\prime}_{c}(\omega)}{\ell^{(q,\lambda)\prime}_{c}(x)}\right)
\geq 0,\quad
\end{align}
for some $\omega\in(x-z,x)\subseteq\left(d^{*},x\right)$, and
\begin{align}\label{sign.lim.oper.}
&\Big[\frac{\ell^{(q,\lambda)}_{c}(x-z)}{\ell^{(q,\lambda)\prime}_{c}(x)}
-\frac{\ell^{(q,\lambda)}_{c}(x)}{\ell^{(q,\lambda)\prime}_{c}(x)}\Big]
-\Big[\frac{\ell^{(q,\lambda)}_{c}(x-z)}{\ell^{(q,\lambda)\prime}_{c}(d^{*})}
-\frac{\ell^{(q,\lambda)}_{c}(d^{*})}{\ell^{(q,\lambda)\prime}_{c}(d^{*})}-(x-d^{*})
\Big]
\nonumber\\
=&
\ell^{(q,\lambda)}_{c}(x-z)\Big[\frac{1}{\ell^{(q,\lambda)\prime}_{c}(x)}
-\frac{1}{\ell^{(q,\lambda)\prime}_{c}(d^{*})}\Big]
+\frac{\ell^{(q,\lambda)}_{c}(d^{*})}{\ell^{(q,\lambda)\prime}_{c}(d^{*})}\notag\\
&-\frac{\ell^{(q,\lambda)}_{c}(d^{*})+\ell^{(q,\lambda)\prime}_{c}(\omega)(x-d^{*})}
{\ell^{(q,\lambda)\prime}_{c}(x)}
+\big(x-d^{*}\big)
\nonumber\\
=&
\Big[\ell^{(q,\lambda)}_{c}(d^{*})-\ell^{(q,\lambda)}_{c}(x-z)\Big]\Big[\frac{1}
{\ell^{(q,\lambda)\prime}_{c}(d^{*})}-
\frac{1}{\ell^{(q,\lambda)\prime}_{c}(x)}
\Big]
\nonumber\\
&+\big(x-d^{*}\big)\Big[1-\frac{\ell^{(q,\lambda)\prime}_{c}(\omega)}
{\ell^{(q,\lambda)\prime}_{c}(x)}\Big]
\geq0,\quad x\in(d^{*},\infty),\,z\in(x-d^{*},\infty),
\end{align}
for some $\omega\in\left(d^{*},x\right)$, and
\begin{align}\label{4.26}
&-\frac{\ell^{(q,\lambda)}_{c}(x)}
{\ell^{(q,\lambda)\prime}_{c}(x)}-\Big[
-\frac{\ell^{(q,\lambda)}_{c}(d^{*})}
{\ell^{(q,\lambda)\prime}_{c}(d^{*})}-(x-d^{*})\Big]\notag\\
=&
\left(x-d^{*}\right)\Big[1-\frac{\ell^{(q,\lambda)\prime}_{c}(\omega)}
{\ell^{(q,\lambda)\prime}_{c}(x)}\Big]
+\Big[\frac{\ell^{(q,\lambda)}_{c}(d^{*})}
{\ell^{(q,\lambda)\prime}_{c}(d^{*})}-\frac{\ell^{(q,\lambda)}_{c}(d^{*})}
{\ell^{(q,\lambda)\prime}_{c}(x)}\Big]\geq0, \quad x\in\left(d^{*},\infty\right),
\end{align}
for some $\omega\in\left(d^{*},x\right)$.
From \eqref{4.20}, \eqref{4.21}, \eqref{sign.lim.oper.} and \eqref{4.26}, it follows that the right hand side of \eqref{3.26} is less than the right hand side of \eqref{3.25} with the latter being zero. Therefore,  \eqref{ver.xles.a} is verified for $x\in\left(d^{*},\infty\right)$.
\end{proof}

\begin{lem}\label{lem4-4}
Let $d^{*}$ and $\widetilde{V}_{y}(x)$ be defined in \eqref{def.d*} and \eqref{tildeV_{y}(x)}, respectively. For $x\in\left(-\infty,c\right)$, we have that
\begin{align}\label{ver.xles.a.001}
\widetilde{\mathcal{A}}\widetilde{V}_{d^{*}}(x)-(q+\lambda) \widetilde{V}_{d^{*}}(x)=0.
\end{align}
\end{lem}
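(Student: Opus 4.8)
The plan is to exploit the fact that, by \eqref{tildeV_{y}(x)}, the restriction of $\widetilde{V}_{d^{*}}$ to the half-line $(-\infty,c)$ is simply a constant multiple of the exponential $x\mapsto\mathrm{e}^{\widetilde{\Phi}_{q+\lambda}x}$: indeed $\widetilde{V}_{d^{*}}(x)=K\,\mathrm{e}^{\widetilde{\Phi}_{q+\lambda}(x-c)}$ for all $x\leq c$, where $K:=W_{q}(c)/\ell^{(q,\lambda)\prime}_{c}(d^{*})$ is a (well-defined, positive) constant in view of Remark \ref{rem01} and $d^{*}\geq c$. The key observation I would use is that the operator $\widetilde{\mathcal{A}}$ evaluated at a point $x\in(-\infty,c)$ only involves $\widetilde{V}_{d^{*}}(x)$, its derivatives at $x$, and the values $\widetilde{V}_{d^{*}}(x-z)$ for $z>0$; since $x-z<x<c$, every one of these is read off the exponential branch, so computing $\widetilde{\mathcal{A}}\widetilde{V}_{d^{*}}$ on $(-\infty,c)$ reduces to computing $\widetilde{\mathcal{A}}$ on the pure exponential.

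First I would record that $h_{\theta}(x):=\mathrm{e}^{\theta x}$ lies in the domain of $\widetilde{\mathcal{A}}$ for $\theta=\widetilde{\Phi}_{q+\lambda}\geq0$: the integrand $\mathrm{e}^{-\theta z}-1+\theta z\mathbf{1}_{\{z\leq1\}}$ is $O(z^{2})$ as $z\downarrow0$ and bounded for $z\geq1$, hence integrable against $\widetilde{\upsilon}$ by $\int_{0}^{\infty}(1\wedge z^{2})\widetilde{\upsilon}(\mathrm{d}z)<\infty$, which is exactly the integrability built into the definition of $\widetilde{\psi}$. Next I would substitute $h_{\theta}$ into \eqref{operatorHJB} with the triplet $(\widetilde{\gamma},\widetilde{\sigma},\widetilde{\upsilon})$ and use $h_{\theta}'(x)=\theta h_{\theta}(x)$, $h_{\theta}''(x)=\theta^{2}h_{\theta}(x)$ and $h_{\theta}(x-z)=\mathrm{e}^{-\theta z}h_{\theta}(x)$; factoring out $h_{\theta}(x)$, the surviving bracket is precisely $\widetilde{\psi}(\theta)$, so that $\widetilde{\mathcal{A}}h_{\theta}(x)=\widetilde{\psi}(\theta)h_{\theta}(x)$ for all $x\in\mathbb{R}$ --- this is just the generator identity behind the exponential martingale $\mathrm{e}^{\theta\widetilde{X}(t)-\widetilde{\psi}(\theta)t}$.

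To conclude, I would take $\theta=\widetilde{\Phi}_{q+\lambda}$ and invoke that $\widetilde{\Phi}_{q+\lambda}$ is the right inverse of $\widetilde{\psi}$ at $q+\lambda$, i.e.\ $\widetilde{\psi}(\widetilde{\Phi}_{q+\lambda})=q+\lambda$; by linearity of $\widetilde{\mathcal{A}}$ this gives, for $x\in(-\infty,c)$,
\[
\widetilde{\mathcal{A}}\widetilde{V}_{d^{*}}(x)=K\,\mathrm{e}^{-\widetilde{\Phi}_{q+\lambda}c}\,\widetilde{\mathcal{A}}h_{\widetilde{\Phi}_{q+\lambda}}(x)=(q+\lambda)\,K\,\mathrm{e}^{\widetilde{\Phi}_{q+\lambda}(x-c)}=(q+\lambda)\,\widetilde{V}_{d^{*}}(x),
\]
which is exactly \eqref{ver.xles.a.001}. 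I do not expect a genuine obstacle here: the only point requiring a little care is observing that the spectral negativity of $\widetilde{X}$ keeps the shifted argument $x-z$ inside the exponential branch of $\widetilde{V}_{d^{*}}$ (so that $x=c$ must be excluded, as in the statement), after which everything collapses to the one-line identity $\widetilde{\mathcal{A}}\mathrm{e}^{\theta\cdot}=\widetilde{\psi}(\theta)\mathrm{e}^{\theta\cdot}$; the domain/integrability verification above is routine.
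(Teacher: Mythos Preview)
Your proposal is correct and follows essentially the same approach as the paper: both recognize that on $(-\infty,c)$ the function $\widetilde{V}_{d^{*}}$ is a constant multiple of $\mathrm{e}^{\widetilde{\Phi}_{q+\lambda}x}$, substitute this into $\widetilde{\mathcal{A}}$, factor out the exponential to obtain $\widetilde{\psi}(\widetilde{\Phi}_{q+\lambda})-(q+\lambda)=0$, and conclude. Your version is simply a bit more explicit about the domain/integrability check and the role of spectral negativity in keeping $x-z$ on the exponential branch.
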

\begin{proof}
It follows from definition that $\widetilde{V}_{d^{*}}(x)=\frac
{\mathrm{e}^{\widetilde{\Phi}_{q+\lambda}(x-c)}}
{\ell^{(q,\lambda)\prime}_{c}(d^{*})}$, $x\in\left(-\infty, c\right)$.
Hence, we have that
\begin{align}
&
\widetilde{\mathcal{A}}\widetilde{V}_{d^{*}}(x)-(q+\lambda) \widetilde{V}_{d^{*}}(x)=
\frac{\mathrm{e}^{\widetilde{\Phi}_{q+\lambda}(x-c)}}{\ell^{(q,\lambda)\prime}_{c}(d^{*})}
\Big[\widetilde{\gamma} \widetilde{\Phi}_{q+\lambda}+\frac{1}{2}\widetilde{\sigma}^{2}\widetilde{\Phi}_{q+\lambda}^{2}\nonumber\\
&+\int_{(0,\infty)}\left(\mathrm{e}^{-\widetilde{\Phi}_{q+\lambda}z}-1+\widetilde{\Phi}_{q+\lambda} z\mathbf{1}_{(0,1)}(z)\right)\widetilde{\upsilon}(\mathrm{d}z)-(q+\lambda)\Big]
\nonumber\\
=&
\frac{\mathrm{e}^{\widetilde{\Phi}_{q+\lambda}(x-c)}}{\ell^{(q,\lambda)\prime}_{c}(d^{*})}
\left(\widetilde{\psi}\left(\widetilde{\Phi}_{q+\lambda}\right)-q-\lambda\right)=0,\quad x\in\left(-\infty, c\right),
\nonumber
\end{align}
which is the desired result \eqref{ver.xles.a.001}.
\end{proof}

We are ready to present and prove the main result of this section.

\begin{thm}\label{HJB4}
Suppose that the tail of the L\'evy measure $\upsilon$ is log-convex. If either $\psi(\widetilde{\Phi}_{q+\lambda})\geq q$ or $W_{q}(0+)=0$ \emph{(}i.e., either $\sigma>0$ or $\int_{(0,1)}z\upsilon(\mathrm{d}z)=\infty$\emph{)},
then the barrier dividend strategy with $d^{*}$ defined in \eqref{def.d*} is the optimal singular dividend control attaining the maximal value function under the Chapter 11 bankruptcy.
\end{thm}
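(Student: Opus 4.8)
The plan is to derive Theorem~\ref{HJB4} by applying the verification theorem (Theorem~\ref{Lem4-1}) with $y=d^{*}$ and then observing that the barrier strategy at level $d^{*}$ actually attains the resulting upper bounds $V_{d^{*}}$ and $\widetilde{V}_{d^{*}}$. Concretely, I would first verify that $V_{d^{*}}$ and $\widetilde{V}_{d^{*}}$ fulfil all hypotheses of Theorem~\ref{Lem4-1}. Three of the required inequalities are already at hand: Lemma~\ref{lem4-2} gives $\mathcal{A}V_{d^{*}}(x)-qV_{d^{*}}(x)=0$ on $(0,d^{*})$; Lemma~\ref{lem4-3} gives $\mathcal{A}V_{d^{*}}(x)-qV_{d^{*}}(x)\le 0$ on $(d^{*},\infty)$ together with $V_{d^{*}}^{\prime}(x)\ge 1$ on $[c,\infty)$; and Lemma~\ref{lem4-4} gives $\widetilde{\mathcal{A}}\widetilde{V}_{d^{*}}(x)-(q+\lambda)\widetilde{V}_{d^{*}}(x)=0$ on $(-\infty,c)$. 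The first two combine to $(\mathcal{A}-q)V_{d^{*}}\le 0$ on $(0,d^{*})\cup(d^{*},\infty)$, i.e.\ for a.e.\ $x\in(0,\infty)$, which is exactly what the verification theorem needs (cf.\ the reduction discussed right after its proof). For the monotonicity hypothesis: on $(0,d^{*}]$, $V_{d^{*}}$ equals $\ell^{(q,\lambda)}_{c}(\cdot)/\ell^{(q,\lambda)\prime}_{c}(d^{*})$, increasing by Remark~\ref{rem01}; on $(d^{*},\infty)$ it is the affine continuation with slope $1$; on $(-\infty,0]$ it is the increasing exponential $\mathrm{e}^{\widetilde{\Phi}_{q+\lambda}(\cdot-c)}W_{q}(c)/\ell^{(q,\lambda)\prime}_{c}(d^{*})$; and $\widetilde{V}_{d^{*}}$ is increasing for the same reasons.

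The single place where the extra hypothesis ``$\psi(\widetilde{\Phi}_{q+\lambda})\ge q$ or $W_{q}(0+)=0$'' is genuinely used is the gluing of the two branches of $V_{d^{*}}$ at the reorganization level $0$. From \eqref{ell.def.} one computes $\ell^{(q,\lambda)}_{c}(0+)-\mathrm{e}^{-\widetilde{\Phi}_{q+\lambda}c}W_{q}(c)=W_{q}(0+)\bigl(1-\mathrm{e}^{-\widetilde{\Phi}_{q+\lambda}c}Z_{q}(c,\widetilde{\Phi}_{q+\lambda})\bigr)=W_{q}(0+)\bigl(\psi(\widetilde{\Phi}_{q+\lambda})-q\bigr)\int_{0}^{c}\mathrm{e}^{-\widetilde{\Phi}_{q+\lambda}w}W_{q}(w)\,\mathrm{d}w$, so that $V_{d^{*}}(0+)-V_{d^{*}}(0)=\bigl[\ell^{(q,\lambda)}_{c}(0+)-\mathrm{e}^{-\widetilde{\Phi}_{q+\lambda}c}W_{q}(c)\bigr]/\ell^{(q,\lambda)\prime}_{c}(d^{*})$ has the sign of $\psi(\widetilde{\Phi}_{q+\lambda})-q$ and vanishes when $W_{q}(0+)=0$. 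Under the stated assumption this difference is $\ge 0$, hence $V_{d^{*}}(0+)\ge V_{d^{*}}(0)=\widetilde{V}_{d^{*}}(0)$ and $V_{d^{*}}$ has no downward jump at $0$; it is therefore genuinely non-decreasing on all of $\mathbb{R}$ (and continuous at $0$ in the unbounded-variation case $W_{q}(0+)=0$), which is precisely the property that legitimizes the telescoping identities and the sign estimates in the proof of Theorem~\ref{Lem4-1} at the switching instants. Consequently Theorem~\ref{Lem4-1} yields $V_{d^{*}}(x)\ge\sup_{D\in\mathcal{D}}V_{D}(x)$ for $x>0$ and $\widetilde{V}_{d^{*}}(x)\ge\sup_{D\in\mathcal{D}}\widetilde{V}_{D}(x)$ for $x<c$.

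To finish, I would invoke Proposition~\ref{thm3-1} with $n=1$ and $d=d^{*}$: since $d^{*}\ge c$ by Lemma~\ref{thm4-2}, the barrier-$d^{*}$ strategy $D_{d^{*}}\in\mathcal{D}$ of \eqref{def.Dd} pays dividends only at surplus level $d^{*}\ge c$, so the constraint ``$U\ge c$'' in \eqref{op.str.} is automatically met, and formulas \eqref{V1.gen.}, \eqref{V1.gen.001} give $V_{D_{d^{*}}}(x)=\mathcal{V}_{1}(x)=V_{d^{*}}(x)$ for $x>0$ and $\widetilde{V}_{D_{d^{*}}}(x)=\widetilde{\mathcal{V}}_{1}(x)=\widetilde{V}_{d^{*}}(x)$ for $x<c$. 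Combining with the previous paragraph forces $V_{D_{d^{*}}}(x)=\sup_{D\in\mathcal{D}}V_{D}(x)$ for $x>0$ and $\widetilde{V}_{D_{d^{*}}}(x)=\sup_{D\in\mathcal{D}}\widetilde{V}_{D}(x)$ for $x<c$, i.e.\ $D^{*}=D_{d^{*}}$ is the optimal dividend control. I do not expect any single computation to be the real obstacle; the conceptual crux is recognizing that the condition relating $\psi(\widetilde{\Phi}_{q+\lambda})$ to $q$ (or unbounded variation) is exactly what rules out a downward discontinuity of $V_{d^{*}}$ at the switching level and thereby preserves the monotonicity demanded by the verification theorem, and — relatedly — checking that the whole-line extensions of $V_{y}$ and $\widetilde{V}_{y}$ mesh correctly at $0$ and at $c$.
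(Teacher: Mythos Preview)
Your proposal is correct and follows essentially the same route as the paper: verify the hypotheses of Theorem~\ref{Lem4-1} via Lemmas~\ref{lem4-2}--\ref{lem4-4}, reduce the monotonicity requirement on $V_{d^{*}}$ to the sign of the jump $V_{d^{*}}(0+)-V_{d^{*}}(0)$, and compute that jump from \eqref{ell.def.} to see it is nonnegative precisely under the stated hypothesis. The paper's proof is terser but makes exactly the same points; your explicit identification of $V_{D_{d^{*}}}$ with $V_{d^{*}}$ via Proposition~\ref{thm3-1} just spells out what the paper leaves implicit in the construction of $V_{y}$ as the extension of $\mathcal{V}_{1}$.
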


\begin{proof}
It is straightforward to see that $\widetilde{V}_{d^{*}}(x)$ is non-decreasing on $(-\infty,\infty)$ by \eqref{tildeV_{y}(x)}. Moreover, thanks to \eqref{V_{y}(x)} and Remark \ref{rem01}, $V_{d^{*}}(x)$ is increasing on $(-\infty,\infty)$ if and only if $V_{d^*}(0)<V_{d^*}(0+)$, which holds if and only if $\psi(\widetilde{\Phi}_{q+\lambda})\geq q$ or $W_{q}(0+)=0$ (i.e., $X$ has paths of unbounded variation). The conclusion of Theorem \ref{HJB4} is a direct consequence of Theorem \ref{Lem4-1} and Lemmas \ref{lem4-2}-\ref{lem4-4}.
\end{proof}

\begin{rem}
\label{3.2.amo}
It is worth noting that the solution to our optimal control problem \eqref{op.str.} relies only on the log-convexity assumption on the L\'evy measure of $X$ (rather than on that of $\widetilde{X}$). Indeed, this is a natural consequence of the fact that the function $\ell_{c}^{(q,\lambda)}(x)$ defined by \eqref{ell.def.}, and hence, the candidate optimal value function $V_{d^{*}}(x)$ and $\widetilde{V}_{d^{*}}(x)$ given by \eqref{def.d*} and \eqref{V_{y}(x)}-\eqref{tildeV_{y}(x)}, all depend on the L\'evy triplet $(\widetilde{\gamma}, \widetilde{\sigma}, \widetilde{\upsilon})$ of $\widetilde{X}$ only through the
single parameter $\widetilde{\Phi}_{q+\lambda}$. In particular, $\widetilde{V}_{d^{*}}(x)$ has a very simple form such that the associated Hamilton-Jacobi-Bellman (HJB) equation in terms of $(\widetilde{\gamma}, \widetilde{\sigma}, \widetilde{\upsilon})$ is readily satisfied; see, Lemma \ref{lem4-4}.
\end{rem}

\begin{rem}\label{rem3.2}
There is one scenario that Theorem \ref{HJB4} is not applicable, namely, the case when
$W_{q}(0+)>0$ \emph{(}i.e., $\sigma=0$, $\int_{(0,1)}z\upsilon(\mathrm{d}z)<\infty$, and $W_{q}(0+)=1/(\gamma+\int_{(0,1)}z\upsilon(\mathrm{d}z))$\emph{)} and $\psi(\widetilde{\Phi}_{q+\lambda})<q$ (i.e., $X\neq\widetilde{X}$). Under this scenario, it will be unwise to set the safety barrier $c$ to be positive. 
Otherwise, the insurer would rather choose not to run the business. 
In fact, in this case, one can verify that
\begin{eqnarray}
\lim\limits_{\triangle t\downarrow0}\frac{\mathbb{E}\big[\mathrm{e}^{\widetilde{\Phi}_{q+\lambda}\cdot X(t+\triangle t)}\big]-\mathbb{E}\big[\mathrm{e}^{\widetilde{\Phi}_{q+\lambda}\cdot X(t)}\big]}{\triangle t}
 <
\lim\limits_{\triangle t\downarrow0}\frac{\mathbb{E}\big[\mathrm{e}^{\widetilde{\Phi}_{q+\lambda}\cdot \widetilde{X}(t+\triangle t)}\big]-\mathbb{E}\big[\mathrm{e}^{\widetilde{\Phi}_{q+\lambda}\cdot \widetilde{X}(t)}\big]}{\triangle t},\nonumber
\end{eqnarray}
where the left \emph{(}resp., right\emph{)} hand side can be interpreted as the instant average amount of growth \emph{(}in the sense of exponential moments\emph{)} of the surplus process at solvent \emph{(}resp., insolvent\emph{)} times.
In view of the above inequality, the insurer may prefer staying in the insolvent state to quickly accumulate surplus instead of running the business in the solvency state with a slower surplus growth. To resolve this dilemma and to reduce the long term costly interventions, the regulator may need to set the safety barrier in the way that $c\downarrow0$ \emph{(}suppose $U$ is still well defined\emph{)}. The 
candidate optimal barrier defined in \eqref{def.d*} is 
reduced to
\begin{align}
d^{*}=\sup\{d\geq 0: Z_{q}^{\prime}(x,\widetilde{\Phi}_{q+\lambda})\geq Z_{q}^{\prime}(d,\widetilde{\Phi}_{q+\lambda})\text{ \emph{for all} }x\geq 0\}=0,\nonumber
\end{align}
where we have used by definition that 
$$Z_{q}^{\prime\prime}(x,\widetilde{\Phi}_{q+\lambda})
=\widetilde{\Phi}_{q+\lambda}^{2} Z_{q}(x,\widetilde{\Phi}_{q+\lambda})
-\big[\psi(\widetilde{\Phi}_{q+\lambda})-q\big]\big[\widetilde{\Phi}_{q+\lambda}W_{q}(x)
+W_{q}^{\prime}(x)\big]>0,\quad x\geq0.$$
In addition, the 
functions 
given by \eqref{V_{y}(x)} and \eqref{tildeV_{y}(x)} with $y=0$ is reduced as
\begin{eqnarray}
V_{0}(x)=\widetilde{V}_{0}(x)=x\mathbf{1}_{\{x\geq0\}}+\frac{\mathbf{1}_{\{x\geq0\}}+\mathrm{e}^{\widetilde{\Phi}_{q+\lambda} x}\mathbf{1}_{\{x<0\}}}{\widetilde{\Phi}_{q+\lambda}-(\psi(\widetilde{\Phi}_{q+\lambda})-q) W_{q}(0+)}.\nonumber
\end{eqnarray}
By arguments similar to those in Lemmas \ref{lem4-3} and \ref{lem4-4}, it is easy to verify that
$(\widetilde{\mathcal{A}}-(q+\lambda) )\widetilde{V}_{0}(x)=0$ for all $x<0$
and
$(\mathcal{A}-q )V_{0}(x)\leq0$ for all $x>0$, implying that the barrier dividend strategy with barrier level $0$ is indeed the optimal dividend strategy.
Therefore, $\psi(\widetilde{\Phi}_{q+\lambda})<q$ and $W_{q}(0+)>0$ leads to an extreme case that can be reasonably ruled out in the real life practice.
\end{rem}

\begin{rem}\label{rem3.3}
In the special case when $\widetilde{X}\equiv X$, we actually have $\psi(\widetilde{\Phi}_{q+\lambda})=\psi(\Phi_{q+\lambda})=q+\lambda>q$. In fact, the optimal dividend problem with $X\equiv\widetilde{X}$ and $c=0$ has already been addressed by \cite{Re19}, and our Theorem \ref{HJB4} covers the result in \cite{Re19}. By Theorem \ref{HJB4}, the optimal barrier level of the optimal dividend strategy is equal to
\begin{align}
d^{*}=\sup\{d\geq c: \hbar^{(q,\lambda)\prime}_{c}(x)\geq \hbar^{(q,\lambda)\prime}_{c}(d)\mbox{ \emph{for all} }x\geq c\},\nonumber
\end{align}
where
$
\hbar_{c}^{(q,\lambda)}(x)
=\lambda \frac{W_{q}(x)}{W_{q}(c)}\int_{0}^{c}\mathrm{e}^{-\Phi_{q+\lambda} w}W_{q}(w)\mathrm{d}w+
\mathrm{e}^{-\Phi_{q+\lambda}c}Z_{q}(x,\Phi_{q+\lambda})$.
Letting $c\downarrow0$, we can simplify $d^{*}$ to
\begin{align}\label{d*red.}
d^{*}:=\sup\{d\geq 0: Z_{q}^{\prime}(x,\Phi_{q+\lambda})\geq Z_{q}^{\prime}(d,\Phi_{q+\lambda})\mbox{ \emph{for all} }x\geq 0\},
\end{align}
where we used the fact that $
\lim_{c\downarrow0}\hbar_{c}^{(q,\lambda)}(x)
=Z_{q}(x,\Phi_{q+\lambda})$. Note that we may have $d^*>0$. Moreover, the value functions under the barrier dividend strategy with the barrier $d^{*}$ given by \eqref{d*red.} satisfy
\begin{eqnarray}\label{V_{y}(x).c=0}
V_{d^{*}}(x)=\widetilde{V}_{d^{*}}(x)=
\frac
{Z_{q}(x,\Phi_{q+\lambda})}
{Z_{q}^{\prime}(d^{*},\Phi_{q+\lambda})}\mathbf{1}_{\{x\leq d^{*}\}}
+\Big[\frac
{Z_{q}(d^{*},\Phi_{q+\lambda})}
{Z_{q}^{\prime}(d^{*},\Phi_{q+\lambda})}+x-d^{*}\Big]\mathbf{1}_{\{x>d^{*}\}}.
\nonumber
\end{eqnarray}
It is easy to verify that
$(\mathcal{A}-(q+\lambda) )\widetilde{V}_{d^{*}}(x)=0$ for all $x<0$, $(\mathcal{A}-q )V_{d^{*}}(x)=0$ for all $x\in(0,d^{*})$,
and
$(\mathcal{A}-q)V_{d^{*}}(x)\leq0$ for all $x>d^{*}$ since 
$Z_{q}^{\prime}(x,\Phi_{q+\lambda})$ is non-decreasing on $[d^{*},\infty)$.
It follows that the barrier dividend strategy with $d^{*}$ given in \eqref{d*red.} is indeed optimal, and therefore, Theorem \ref{HJB4} is consistent with the special case  $\widetilde{X}\equiv X$ and $c=0$ studied in \cite{Re19}.
\end{rem}

\section{An Illustrative Example}\label{sec:example}

Theorem \ref{HJB4} shows that $d^{*}$ defined in \eqref {def.d*} is indeed the optimal barrier under some mild conditions. We now carry out explicit computations to identify $d^*$ in an example of Cram\'er-Lundberg process $X$ with exponential jump sizes, namely, a process $X$ defined by a deterministic drift $p$ (the premium income) subtracting a compound Poisson process with jump intensity $\lambda_{0}$ and exponentially distributed jump sizes with mean $1/\mu$. Hence, the process $X$ has paths of bounded variation. In addition, the scale function of $X$ reads as $$W_{q}(x)=p^{-1}(A_{+}\mathrm{e}^{q_{+}x}-A_{-}\mathrm{e}^{q_{-}x}),$$
where $A_{\pm}:=(\mu+q_{\pm})/(q_{+}-q_{-})$ and $q_{\pm}:=(q+\lambda_{0}-\mu p\pm \sqrt{(q+\lambda_{0}-\mu p)^{2}+4pq\mu})/2p$ (i.e., $q_{+}>0$, $q_{-}<0$). In particular, $W_{q}(0+)=p^{-1}>0$.
\begin{itemize}
    \item 
For the first case $\psi(\widetilde{\Phi}_{q+\lambda})>q$, to derive the explicit expression of $d^{*}$, we note that this, together with \eqref{ell.def.} and some tedious algebraic manipulation, yields that
\begin{align}
\ell_{c}^{(q,\lambda)}(x)
&=(\psi(\widetilde{\Phi}_{q+\lambda})-q)
p^{-1}(B_{+}\mathrm{e}^{q_{+}x}/q_{+}-B_{-}\mathrm{e}^{q_{-}x}/q_{-}),
\nonumber\\
\ell^{(q,\lambda)\prime}_{c}(x)&=
(\psi(\widetilde{\Phi}_{q+\lambda})-q)
p^{-1}(B_{+}\mathrm{e}^{q_{+}x}-B_{-}\mathrm{e}^{q_{-}x}),\nonumber
\end{align}
where
$B_{\pm}:=\frac{p^{-1}A_{\pm}q_{\pm}\big[\mu+\widetilde{\Phi}_{q+\lambda}-(\mu+q_{\mp})\mathrm{e}^{(q_{\mp}-\widetilde{\Phi}_{q+\lambda})c} \big]}{(q_{+}-\widetilde{\Phi}_{q+\lambda})(q_{-}-\widetilde{\Phi}_{q+\lambda})}$
with $B_{+}>0$ and $B_{-}<0$.

When $\psi(\widetilde{\Phi}_{q+\lambda})>q$, we obtain that the optimal barrier $d^*=c\vee (\ln (B_{-}q_{-}/B_{+}q_{+})/(q_{+}-q_{-}))$.
On one hand, we remark that the model when $d^*>c$, i.e. $c<\ln (B_{-}q_{-}/B_{+}q_{+})/(q_{+}-q_{-})$, is the case of interest from the practical point of view because the insurance company pays dividend whenever it is in the solvency state and the surplus level is more than adequate to attain a high barrier $d^*>c$. On the other hand, the extreme case may happen that $d^*=c$, and the insurance company needs to pay dividend whenever the surplus comes back to the solvency barrier $c$. Note that the switch to solvency state is triggered immediately when the barrier $c$ is hit, hence this extreme case does not change the fact the insurance company will switch between solvency and insolvency states until Chapter 11 bankruptcy occurs. However, the surplus level can never climb above a prescribed solvency level $c$, which not only gives very low incentives for insurer to run the business, but may also cause the regulator more frequent interventions.
We can see that $d^*=c$ 
happens when the regulator is too conservative and sets $c$ too high (i.e. $c\geq \ln (B_{-}q_{-}/B_{+}q_{+})/(q_{+}-q_{-})$). Consequently, from the regulator's perspective,  one message to take from this example is that the regulator may monitor the dividend barriers from all market participants to actively adjust the level of the safety barrier $c$ so that some unnecessary costs in long term and large scale interventions can be avoided.

\item For the second case $\psi(\widetilde{\Phi}_{q+\lambda})=q$, it is easy to see that $Z_{q}(x,\widetilde{\Phi}_{q+\lambda})=\mathrm{e}^{\widetilde{\Phi}_{q+\lambda}x}$. Hence, it holds that
\begin{align}
\ell_{c}^{(q,\lambda)}(x)=
\mathrm{e}^{-\widetilde{\Phi}_{q+\lambda} c}W_{q}(c)
\mathrm{e}^{\widetilde{\Phi}_{q+\lambda} x},
\quad
\ell_{c}^{(q,\lambda)\prime}(x)=
\widetilde{\Phi}_{q+\lambda}\mathrm{e}^{-\widetilde{\Phi}_{q+\lambda} c}W_{q}(c)
\mathrm{e}^{\widetilde{\Phi}_{q+\lambda} x}.
\nonumber
\end{align}
It then follows that the optimal barrier $d^{*}=c$. 

\item For the final case $\psi(\widetilde{\Phi}_{q+\lambda})<q$, it is straightforward to check that 
\begin{align}
    \ell_{c}^{(q,\lambda)}(x)&=K_{+}\mathrm{e}^{q_{+}x}/q_{+}-K_{-}\mathrm{e}^{q_{-}x}/q_{-}+K_{0}\mathrm{e}^{\widetilde{\Phi}_{q+\lambda}x},
    \nonumber
\end{align}
where $K_{\pm}=\psi(\widetilde{\Phi}_{q+\lambda})-q)B_{\pm}/p$ and 
$K_{0}=\big[1-
\frac{(\psi(\widetilde{\Phi}_{q+\lambda})-q)(\mu+\widetilde{\Phi}_{q+\lambda}) }{p(q_{+}-\widetilde{\Phi}_{q+\lambda})(q_{-}-\widetilde{\Phi}_{q+\lambda})}\big]\big[\frac{A_{+}\mathrm{e}^{q_{+}c}}{p}-
\frac{A_{-}\mathrm{e}^{q_{-}c}}{p}\big]
\mathrm{e}^{-\widetilde{\Phi}_{q+\lambda}c}
$. Theorem \ref{HJB4} is not applicable to handle this case (recall that $W_{q}(0+)>0$). However, by Remark \ref{rem3.2}, this case
corresponds to some extreme behavior that can be excluded in practice because it occurs when $\widetilde{X}$ under the regulator's intervention grows upward even faster than $X$ on average so that both the safety barrier and the optimal dividend barrier will turn out to be $0$.
\end{itemize}

\ \\
\ \\
\textbf{Acknowledgements}: {\small
Wenyuan Wang acknowledges the financial support from the National Natural Science Foundation of China (No.12171405; No.11661074) and the Program for New Century Excellent Talents in Fujian Province University. Xiang Yu acknowledges the financial support from the Hong Kong Polytechnic University research grant (No.P0031417). Xiaowen Zhou acknowledges the financial support from  NSERC (RGPIN-2021-04100) and National Natural Science Foundation of China (No.11771018; No.12171405).}
\ \\
\ \\
\textbf{Conflict of interest}\\
The authors declare that they have no conflict of interest.

\vspace{0.3in}

\end{document}